\documentclass[a4paper,12pt]{amsart}

\textheight=25cm \setlength{\topmargin}{-1.0cm} \oddsidemargin0cm \evensidemargin0cm \setlength{\textwidth}{16cm}
\usepackage{amsmath,amssymb,amsfonts,graphicx}
\usepackage{amsmath,amssymb,amsfonts,graphicx,color,fancyhdr}
\usepackage[latin1]{inputenc}
\usepackage{psfrag}

\newtheorem{theorem}{Theorem}[section]
\newtheorem{proposition}[theorem]{Proposition}
\newtheorem{lemma}[theorem]{Lemma}
\newtheorem{definition}[theorem]{Definition}

\newtheorem{remark}[theorem]{Remark}
\newtheorem{corollary}[theorem]{Corollary}

\usepackage[colorlinks=true, pdfstartview=FitV, linkcolor=blue, citecolor=blue, urlcolor=blue]{hyperref}
\usepackage{psfrag,caption}

\allowdisplaybreaks

\def\be#1 {\begin{equation} \label{#1}}
\newcommand{\ee}{\end{equation}}

\def\sqw{\hbox{\rlap{\leavevmode\raise.3ex\hbox{$\sqcap$}}$%
\sqcup$}}
\def\findem{\ifmmode\sqw\else{\ifhmode\unskip\fi\nobreak\hfil
\penalty50\hskip1em\null\nobreak\hfil\sqw
\parfillskip=0pt\finalhyphendemerits=0\endgraf}\fi}

\newcommand{\R}{{\mathbb {R}}}

\newcommand{\N}{{\mathbb N}}
\newcommand{\Z}{{\mathbb Z}}

\newcommand{\po}{{\mathbb P}}
\newcommand{\qo}{{\mathbb Q}}
\newcommand{\dist}{\operatorname{dist}}
\newcommand{\supp}{\operatorname{supp}}
\newcommand{\ds}{\widetilde{\operatorname{size}}_{\vec{T}}}
\newcommand{\vs}{\overrightarrow{\operatorname{size}}_{\vec{T}}}
\DeclareMathOperator*{\essup}{ess\,sup}


\setcounter{tocdepth}{2}

\begin{document}

\title
[Quantitative weighted estimates for Rubio de Francia's square function]
{\bf  Quantitative weighted estimates for Rubio de Francia's Littlewood--Paley square function}

\author[R. Garg, L. Roncal and S. Shrivastava]{Rahul Garg \and Luz Roncal \and Saurabh Shrivastava}

\address[R. Garg and S. Shrivastava]{
Dept. of Mathematics \\
Indian Institute of Science Education and Research Bhopal\\
462066, India}
\email{\{rahulgarg,saurabhk\}@iiserb.ac.in}

\address[L. Roncal]{
BCAM - Basque Center for Applied Mathematics \\
48009 Bilbao, Spain and Ikerbasque, Basque Foundation for Science, 48011 Bilbao, Spain}
\email{lroncal@bcamath.org}



\subjclass[2010]{Primary: 42B20. Secondary: 42B25, 42B35}


\keywords{Rubio de Francia's Littlewood--Paley square function, sparse domination, weighted norm inequalities}

\begin{abstract} We consider the Rubio de Francia's Littlewood--Paley square function associated with an arbitrary family of intervals in $\R$ with finite overlapping.
Quantitative weighted estimates are obtained for this operator. The linear dependence on the characteristic of the weight $[w]_{A_{p/2}}$ turns out to be sharp for $3\le p<\infty$, whereas the sharpness in the range $2<p<3$ remains as an open question. Weighted weak-type estimates in the endpoint $p=2$ are also provided. The results arise as a consequence of a sparse domination shown for these operators, obtained by  suitably adapting the ideas coming from \cite{Be} and \cite{CDpO}. 
\end{abstract}

\maketitle


\section{Introduction}\label{sec:intro}
Let $\Omega=\{\omega_k\}_{k\in \Z}$ be an arbitrary family of intervals in $\R$ with finite overlapping, i.e., 
\begin{equation}
\label{overl}
\sum_{k\in \Z} {\bf 1}_{\omega_k}(x)\le B, \quad \text{ for any } x\in \R,
\end{equation}
and some constant $B>0$. Here ${\bf 1}_{E}$ denotes the characteristic function of a measurable set $E\subseteq \R.$

The \textit{Littlewood--Paley square function} associated with the family $\Omega$ is defined as

$$
Tf(x):=\Big(\sum_{k\in \Z}|f\ast \check{{\bf 1}}_{\omega_k}(x)|^2\Big)^{1/2}. 
$$
Here $\check{f}$ denotes the inverse Fourier transform of $f$ defined as 
$$\check{f}(x):=\int_{\R} f(\xi) e^{2\pi i x\xi} d\xi,\quad f\in L^1(\R).$$
We briefly present the state of the matter concerning unweighted and weighted $L^p$-mapping properties related to this operator.

\subsection{Unweighted estimates for \textit{Littlewood--Paley square functions} }\label{subsec:un}

The unweighted $L^p$ boundedness of \textit{Littlewood--Paley square functions} is well understood in the linear case. Also, there have been some recent developments in the theory of bilinear Littlewood-Paley square functions, see~\cite{BF, Bern,BS} for instance.  Here we focus only on the linear square functions and discuss some of the main results in this direction.

If $\{\omega_k\}_{k\in \Z}$ is a \textit{lacunary} sequence of intervals, i.e., $\omega_k=[\lambda^k, \lambda^{k+1}]$ for some  $\lambda>1 $, then the classical theorem by Littlewood and Paley \cite{LP} is well-known, namely
\begin{equation*}
\label{un:lac}
\| T(f)\|_p \leq C_p \|f\|_p,\quad 1<p<\infty.
\end{equation*}
See also~\cite[Chapter 8, Section 8.2, p. 186--187]{Duobook} for details and references. The unweighted estimates for the lacunary (sometimes also called \textit{dyadic} when $\lambda=2$) square functions were further strengthened by J. Bourgain~\cite{B}, where he proved sharp asymptotic (in~$p$) unweighted estimates for the operator norm $\| T\|_p.$ More precisely, Bourgain proved the following. 
\begin{theorem}[\cite{B}] \label{un:sharpB} Let $T$ denote the lacunary Littlewood--Paley square function associated with the sequence of dyadic intervals. Then 
$$
\|T\|_p \simeq \frac{1}{(p-1)^{\frac{3}{2}}}\quad \text{as }p\rightarrow 1 \quad \text{ and }\quad  \|T\|_p \simeq p\quad\text{as}~p\rightarrow \infty.
$$
\end{theorem}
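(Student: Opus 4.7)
The plan is to prove the two asymptotic relations by pairing sharp upper bounds (from functional-analytic arguments and interpolation) with matching lower bounds (from explicit test-function constructions). The starting point is the $L^2$ estimate $\|Tf\|_2\lesssim B^{1/2}\|f\|_2$, immediate from Plancherel together with the overlap hypothesis \eqref{overl}, which serves as the reference point for all interpolation arguments.

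For the upper bound $\|T\|_p\lesssim p$ as $p\to\infty$, I would view $T$ as a vector-valued operator whose components are essentially modulated truncated Hilbert transforms: since each Fourier symbol ${\bf 1}_{\omega_k}$ is a sharp (non-smooth) cutoff, the convolution with $\check{{\bf 1}}_{\omega_k}$ is a difference of Hilbert-transform modulations. Using the classical sharp bound $\|H\|_{L^p\to L^p}\simeq p$ for the Hilbert transform, together with vector-valued Calder\'on--Zygmund machinery (an $L^\infty\to\mathrm{BMO}$ estimate with absolute constant, then interpolation with $L^2$ via John--Nirenberg), yields the linear growth $p$. The matching lower bound here comes almost for free: picking any single interval $\omega_{k_0}$ from the family, the associated convolution already inherits the sharpness $\simeq p$ of the Hilbert transform, which may be exhibited with the test function $f={\bf 1}_{[0,1]}$, for which $T(f)$ has a logarithmic singularity at the endpoints.

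For the upper bound $\|T\|_p\lesssim(p-1)^{-3/2}$ as $p\to 1$, a more delicate argument is required, since the naive approach of interpolating with a weak-type $(1,1)$ endpoint gives only $(p-1)^{-1}$. The extra factor $(p-1)^{-1/2}$ must come from a sharpened endpoint estimate exploiting orthogonality across distinct intervals---for instance, a good-$\lambda$ inequality of Chang--Wilson--Wolff flavor comparing $T$ to a maximal function with Gaussian error, or a Lorentz-space refinement of the weak-type bound, integrated against the appropriate Lorentz scale. For the matching lower bound I would use a multi-interval stacking argument: take an explicit characteristic function and track the distribution function of $Tf$ on the thin set where it concentrates, carefully exploiting the full collection of lacunary intervals so that the logarithmic singularities at each dyadic scale add up coherently; the cumulative gain across many scales is what produces precisely the exponent $3/2$ rather than $1$.

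The main obstacle is the analysis near $p=1$. Improving from $(p-1)^{-1}$ to $(p-1)^{-3/2}$ requires genuine cancellation across distinct frequency intervals, which goes beyond what single-interval Hilbert-transform theory or the standard Calder\'on--Zygmund paradigm provides. Both sides of the proof are delicate: on the upper-bound side one must identify the correct endpoint space into which $T$ maps (strictly smaller than $L^{1,\infty}$), and on the lower-bound side one must construct near-extremizers whose distribution functions align with this sharpened endpoint, which is where the combinatorial and geometric structure of the lacunary family really enters.
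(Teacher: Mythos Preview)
The paper does not contain a proof of this theorem: it is stated with attribution to Bourgain~\cite{B} and used only as background. The surrounding text explicitly remarks that Bourgain proved the estimates on the circle group and that the transfer to $\R$ is explained in~\cite{L}, with an alternative proof of the $p\to 1$ asymptotic due to Bakas~\cite{Bak}. So there is no in-paper argument to compare your proposal against.

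On the merits of your sketch: the $p\to\infty$ part is essentially correct in outline. The lower bound from a single Hilbert-transform component is exactly right, and for the upper bound either a vector-valued $L^\infty\to\mathrm{BMO}$ estimate or the Chang--Wilson--Wolff exponential square-function inequality will deliver $\|T\|_p\lesssim p$.

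The $p\to 1$ part, however, is not a proof but a wish list. You correctly identify that naive weak-$(1,1)$ interpolation gives only $(p-1)^{-1}$, and you gesture at good-$\lambda$ or Lorentz refinements, but you do not say which endpoint space, which good-$\lambda$ inequality, or how the extra $(p-1)^{-1/2}$ actually emerges from it. Likewise, the lower-bound ``multi-interval stacking'' construction is described only at the level of intention; no test function is written down and no distribution-function estimate is carried out. Bourgain's original argument for the $(p-1)^{-3/2}$ upper bound is genuinely non-obvious (and was considered sufficiently so that Bakas~\cite{Bak} published an alternative proof), so this gap is substantive. If you want to turn this into a proof, you would need to either reconstruct Bourgain's argument from~\cite{B} or follow the approach in~\cite{Bak}; what you have written so far does not yet contain the key idea for either direction at $p\to 1$.
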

We would like to remark here that  Bourgain proved the above estimates for the Littlewood--Paley square functions in the Fourier series case on the circle group. However, as explained by A. Lerner in \cite{L}, these can be transferred to the real line version in a straightforward way using the transference principle. 
Recently, O. Bakas~\cite{Bak} gave an alternative proof for the first estimate in the above theorem.  We will discuss more about the sharp quantitative estimates for  square functions in Section~\ref{sec:Qweight}. 

The general case, namely the case of arbitrary disjoint intervals $\omega_k$, was addressed by J. L. Rubio de Francia in~\cite{Rubio}, extending earlier results of L. Carleson \cite{C} and A. C\'ordoba \cite{Cordoba}. By denoting this operator also by $T$, Rubio de Francia proved that 
\begin{equation}
\label{un:gen}
\| T(f)\|_p \leq C_p \|f\|_p,\quad 2\leq p<\infty.
\end{equation}
Moreover, the condition $p\geq 2$ is sharp in the above result, i.e.,  the operator $T$ fails to be bounded on $L^p$ for $p<2$ in general. 
This can be verified by considering the case when $\omega_k=[k,k+1]$, $k\in \Z$.  
The operator $T$ associated with a general family of intervals is commonly referred to as the \textit{Rubio de Francia Littlewood--Paley square function}. Rubio de Francia's result has been revisited many times and different proofs of \eqref{un:gen} were obtained with several techniques, among them we emphasize the techniques involving time-frequency analysis. The literature is extensive, we refer for instance to \cite{ Be,B0, TaoCowling,Journe,   Lacey1,Sjoelin, Soria} and references therein. 

Note that for $p=2$, the Plancherel theorem yields $\|T\|_2=1$. Therefore, at the end-point $p=2$, the question of sharp quantitative estimates for the operator norm $\|T\|_2$ is trivial. However, the question of asymptotic sharp quantitative estimates for $\|T\|_p$ as $p\rightarrow \infty$, is interesting. Again, this question has been addressed by Bourgain~\cite{B} in the case of circle group. He proved that 
\begin{equation*}\label{Qun:B} 
\|T\|_p \simeq p,\quad 2<p<\infty.
\end{equation*}

\subsection{Weighted estimates for \textit{Littlewood--Paley square functions} }

A weight $w$ is a nonnegative locally integrable function defined on $\R^n$.  Recall that given $1<p<\infty$, the Muckhenhoupt class of weights $A_p$ consists of all $w$ satisfying
$$
[w]_{A_p}:=\sup_Q \langle w\rangle_{Q}\big(\langle w^{1-p'}\rangle_{Q}\big)^{p-1}<\infty,
$$
where the supremum ranges over all cubes $Q$ in $\R^n$ with axes parallel to the coordinate axes. For $p=1,$ the class $A_1$ consists of all $w$ such that 
$$
[w]_{A_1}:=\essup\frac{M (w)}{w}<\infty.
$$
Here $M$ denotes the classical Hardy--Littlewood maximal function and we have used the notation 
$$
\langle w\rangle_{r,Q}:=\bigg(\frac{1}{|Q|}\int_Q |w|^r \bigg)^{\frac{1}{r}}, \quad \langle w\rangle_{Q}:=\langle w\rangle_{1,Q}.
$$
The constant $[w]_{A_p}, 1\leq p<\infty,$ is referred to as the $A_p$ characteristic of the weight $w$. We define, in a natural way, the $A_{\infty}$ class as $A_{\infty}=\cup_{p\ge 1}A_p$. Associated to this $A_{\infty}$ class it is also possible to define an $A_{\infty}$ constant as 
$$
[w]_{A_{\infty}}:=\sup_Q\frac{1}{w(Q)}\int_QM(w_{\chi_Q})\,dx.
$$
Here, $w(Q):=\int_Qw(x)\,dx$, and the supremum is taken over all cubes with edges parallel to the coordinate axes.

The weighted $L^p$ estimates for the dyadic Littlewood--Paley square functions were first proved by~Kurtz~\cite{Kurtz}. Recent developments in the theory of weights have focused on understanding sharp quantitative weighted estimates in terms of the $A_p$ characteristic for operators under consideration. The list of papers in this subject is vast and it would be a challenging task by itself to just write all of them down without a miss. Here we refer the reader to~\cite{BFP,CMP,CDpO,Hyt,HRT,Ler,dyadic} and references therein. We bring attention to the recent work of Lerner~\cite{L}, where he studied the lacunary square function and proved the following.
\begin{theorem}[\cite{L}] \label{qw:lerner}Let $T$ be the lacunary Littlewood--Paley square function. If $\alpha_p$ is the best possible exponent in the estimate
$$
\|T\|_{L^p(w)\to L^{p}(w)} \leq C_p [w]_{A_p}^{\alpha_p},
$$
then $$\max\Big (1, \frac{3}{2}\frac{1}{p-1}\Big)\leq \alpha_p\leq \frac{1}{2}\frac{1}{p-1} +\max\Big(1,\frac{1}{p-1}\Big),\quad 1<p<\infty.$$
In particular, $\alpha_p= \frac{3}{2}\frac{1}{p-1}$ is the best possible exponent in the range $1<p\leq 2$.
\end{theorem}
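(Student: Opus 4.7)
We treat the upper and lower bounds with distinct strategies.

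\emph{Upper bound $\alpha_p\le \frac{1}{2(p-1)}+\max(1,\frac{1}{p-1})$.} We first restrict to $p\ge 2$ and exploit the identity $\|Tf\|_{L^p(w)}^2=\||Tf|^2\|_{L^{p/2}(w)}$. Dualizing against nonnegative $h$ with $\|h\|_{L^{(p/2)'}(w)}\le 1$,
\[ \|Tf\|_{L^p(w)}^2 = \sup_{h} \int_{\R} \sum_{k\in\Z}|f\ast\check{{\bf 1}}_{\omega_k}(x)|^2\, h(x)w(x)\,dx. \]
Each $T_kf:=f\ast\check{{\bf 1}}_{\omega_k}$ is a modulated Hilbert-transform projection, so one expects a bilinear sparse-form bound for the integrand with the sharp constants tracked. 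The sharp $A_p$-estimate for the resulting sparse form contributes the standard Calder\'on--Zygmund exponent $[w]_{A_p}^{\max(1,1/(p-1))}$, while an additional $[w]_{A_p}^{1/(2(p-1))}$ factor is picked up when extracting the square root at the end. The range $1<p<2$ is reduced to $p\ge 2$ via the duality relation $[w^{1-p'}]_{A_{p'}}=[w]_{A_p}^{p'-1}$.

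\emph{Lower bound $\alpha_p\ge \max(1,\frac{3}{2(p-1)})$.} Since $|Tf|\ge |T_{k_0}f|$ for every fixed $k_0\in\Z$, and each $T_{k_0}$ is a modulated Hilbert transform whose sharp $A_p$-dependence is $[w]_{A_p}^{\max(1,1/(p-1))}$, one immediately deduces $\alpha_p\ge \max(1,1/(p-1))\ge 1$. The crucial inequality $\alpha_p\ge \frac{3}{2(p-1)}$ (which dominates in the range $1<p<5/2$) is obtained via a reverse-extrapolation principle of Luque--P\'erez--Rela type: any hypothetical bound $\|T\|_{L^p(w)\to L^p(w)}\le C[w]_{A_p}^\alpha$, applied to an $A_1$-weight constructed by the Rubio de Francia iteration from a near-extremal sequence for $\|T\|_q$, yields the unweighted blow-up
\[ \|T\|_{L^q\to L^q}\le C\bigl(\tfrac{1}{q-1}\bigr)^{\alpha(p-1)} \quad(q\to 1^+). \]
Comparing with Bourgain's sharp asymptotic $\|T\|_q\simeq(q-1)^{-3/2}$ (Theorem~\ref{un:sharpB}) forces $\alpha(p-1)\ge \frac{3}{2}$.

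\emph{Main obstacle.} The decisive step is the sharpness of the exponent $\alpha(p-1)$ in the reverse-extrapolation: the $A_1$-weight must be selected so that $[w]_{A_p}$ scales precisely like $(q-1)^{-1}$, with no slack propagating to the final power of $(q-1)^{-1}$. On the upper-bound side, the technical hurdle is that the rough projections $\check{{\bf 1}}_{\omega_k}$ do not form an $\ell^2$-valued Calder\'on--Zygmund kernel to which Hyt\"onen's $A_2$-theorem would apply; a dedicated sparse-domination argument must be built to accommodate the lack of kernel smoothness, and it is precisely this step that produces the extra $\frac{1}{2(p-1)}$ factor in the upper exponent---and which leaves the sharp value of $\alpha_p$ open for $p>2$.
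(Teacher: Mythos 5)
You are assessing a statement the paper does not prove at all: Theorem~\ref{qw:lerner} is quoted from Lerner \cite{L}, so there is no internal proof to compare with; the only related material in the paper is the lower-bound machinery of Theorem~\ref{thm:con} (Luque--P\'erez--Rela \cite{LPR} via Frey--Nieraeth \cite{FN}). Your lower-bound half follows exactly that standard route and is essentially sound: from $\|T\|_{L^p(w)\to L^p(w)}\lesssim [w]_{A_p}^{\alpha}$ one builds the Rubio de Francia majorant $Rf\in A_1$ with $[Rf]_{A_1}\lesssim (q-1)^{-1}$ and tests against the weight $w=(Rf)^{q-p}$, for which $[w]_{A_p}\lesssim [Rf]_{A_1}^{p-q}\sim (q-1)^{-(p-1)}$ (note: it is this negative power of the $A_1$ weight, not $(q-1)^{-1}$ itself, that enters), yielding $\|T\|_{L^q\to L^q}\lesssim (q-1)^{-\alpha(p-1)}$ and hence $\alpha\ge\frac{3}{2}\frac{1}{p-1}$ from Theorem~\ref{un:sharpB}; the complementary bound $\alpha\ge 1$ comes from $\|T\|_q\simeq p$ as $p\to\infty$ (your single-projection alternative also needs the nontrivial check that a sharp cutoff, i.e.\ a difference of two modulated Hilbert transforms, saturates the Hilbert-transform lower bound).

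The genuine gap is the upper bound, which is the substantive half of Lerner's theorem. For $p\ge 2$ you only assert that some sparse form ``with the sharp constants tracked'' exists and that ``an additional $[w]_{A_p}^{1/(2(p-1))}$ factor is picked up when extracting the square root''; no mechanism produces that exponent (taking the square root of $\||Tf|^2\|_{L^{p/2}(w)}$ halves whatever exponent the bilinear form carries, so the form would have to carry $2\max(1,\frac{1}{p-1})+\frac{1}{p-1}$, which is precisely what has to be proved), and your own ``main obstacle'' paragraph concedes this step is not built. Moreover, the proposed reduction of $1<p<2$ to $p\ge 2$ via $[w^{1-p'}]_{A_{p'}}=[w]_{A_p}^{p'-1}$ cannot work: $T$ is not linear, and the adjoint of the linearization $f\mapsto\{T_kf\}$ is $\vec g\mapsto \sum_k T_k^*g_k$, not the square function. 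Worse, if such a formal transfer were valid, the exponent $1+\frac{1}{2(p-1)}$ at $p>2$ would become $(p-1)+\frac{1}{2}$ at $p'$, which is strictly smaller than $\frac{3}{2}\frac{1}{p'-1}=\frac{3}{2}(p-1)$, contradicting the lower bound you yourself establish; so the duality step is not merely unjustified but impossible. Consequently the range $1<p<2$, and with it the ``in particular'' sharpness claim $\alpha_p=\frac{3}{2}\frac{1}{p-1}$, is unsupported: Lerner's proof in \cite{L} obtains the upper bound directly for all $1<p<\infty$ (combining the sharp weighted theory of smooth square functions with quantitative weighted estimates for the rough projections), and that direct argument is what your proposal is missing.
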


See also \cite{CDpLO}, where the Walsh--Fourier model for the lacunary square function is studied.

In the most general case, Rubio de Francia proved in \cite{Rubio} a weighted version of  \eqref{un:gen} with $A_{p/2}$ weights when $p>2$. However, it is not yet known whether this is true in $L^2(w)$ with $w\in A_1$, although he conjectured the validity of such an inequality in the seminal paper,
see Section~\ref{sec:Qweight}.

Motivated from the above works we address the question of sharp quantitative weighted estimates for the Rubio de Francia's square function in this article. We exploit the ideas presented in~\cite{Be, CDpO, L} in order to prove our results. In particular, we work with the model sum operator for the square function and prove sparse domination for that operator. 

\subsection{Main results} 

Time-frequency analysis in its current form was first developed by M. Lacey and C. Thiele~\cite{LT1,LT2} in their study of the bilinear Hilbert transform. Since then,  there have been extensions of these techniques to many different directions, with significant improvements. We would refer the reader to~\cite{MTT,Th} for a systematic account of this topic. Time-frequency analysis, based on stopping times and localizations, has been exploited by several authors, highlighting the helicoidal method developed by Benea and Muscalu (see \cite{BMs, BMh} and references therein). In particular, it is shown that the local estimates may be employed for proving sparse domination for several scalar operators in harmonic analysis. Another recent remarkable work was carried out by Culiuc et al. in \cite{CDpO}, where the authors formulate a pointwise domination principle for the class of multilinear multiplier operators invariant under modulations of the functions involved. Their proof is based on a stopping time construction based on localized outer-$L^p$ embedding theorems for the wave packet transform, see \cite{DO,DT}. The latter works lead to quantitative (and sometimes even new qualitative) weighted estimates.

The main purpose in this article is to provide quantitative weighted estimates for the Rubio de Francia's Littlewood--Paley square function. In order to do this, we will present a domination principle for the latter operator by positive sparse forms. This yields, in a standard way, quantitative weighted estimates for the operator. We get sharp exponent of the weighted characteristic in the range $3\leq p<\infty$. However, we do not know whether our estimates are sharp in the range of $2<p<3$.

 Our research relies on an adaptation of the time-frequency decomposition of the bilinear form associated with the Rubio de Francia's Littlewood-Paley operator shown by Benea in~\cite{Be} and the approach used in \cite{CDpO}. Moreover, the proofs are simpler than in \cite{CDpO}, since we do not need outer spaces and Carleson estimates. 
 
From now on we shall always work with the square function associated with a finite family of intervals $\Omega=\{w_k\}_{k=1}^N$ satisfying~(\ref{overl}). Since the boundedness results that we prove are independent of $N$, the corresponding results follow in the general case. 

Let $\vec{g}=\{g_k\}_{k=1}^N$ be a sequence of functions and consider the dual pair
\begin{equation*}
\label{eq:dualpair}
\langle Tf, \vec g\rangle =\sum_{k=1}^N \int_{\R} f\ast  \check{1}_{[a_k,b_k]}(x)g_k(x) dx.
\end{equation*} 
We shall use the notation $|\vec g(x)|:=\Big (\sum\limits_{k=1}^N |g_k(x)|^2\Big)^{\frac{1}{2}}$.

Let $\mathcal{D}$ be the standard system of dyadic intervals in $\R$, 
$$
\mathcal{D}:=\{2^{-k}([0,1)+m): k,m\in \Z\},
$$ 
consisting of dyadic half-open intervals of different scales $2^{-k}$, $k\in \Z$. We say that $\mathcal{S}$ is an $\eta$-sparse family ($0<\eta<1$) if for each $I\in \mathcal{S}$ there exists $E_I\subset I$ such that
\begin{enumerate} 
\item $\eta|I|\le |E_I|$.
\item The sets $E_I$ are pairwise disjoint. 
\end{enumerate}

The main result of this article, as described below, involves domination of the bilinear form associated with the Rubio de Francia's Littlewood-Paley operator by positive sparse forms.

\begin{theorem}
\label{thm:main}
Let $f,\vec g$ be in $C_0^{\infty}(\R)$. Then there exist a positive constant $K$ and a $\frac16$-sparse collection $\mathcal{S}$ such that
$$
|\langle Tf, \vec g\rangle| \le K \sum_{I\in \mathcal{S}} |I|\langle f\rangle_{2,I}\langle |\vec g|\rangle_{I}, 
$$
where the sparse collection $\mathcal{S}$ depends on $f$ and $\vec g$.   
\end{theorem}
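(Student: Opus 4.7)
The plan is to combine the wave-packet discretization of the bilinear form associated with Rubio de Francia's operator developed by Benea \cite{Be} with the stopping-time sparse domination framework from \cite{CDpO}, bypassing the outer-$L^p$ spaces in order to simplify the argument. By Cauchy--Schwarz in $k$, $\big|\sum_k (f\ast\check{\mathbf 1}_{\omega_k})(x)\,g_k(x)\big|\le Tf(x)\,|\vec g(x)|$, so the bilinear form is controlled by the positive pairing $\int Tf\cdot|\vec g|$. I would first expand each frequency projector $\mathbf{1}_{\omega_k}$ in $L^2$-normalized wave packets $\phi_{P,k}$ indexed by tiles $P=I_P\times\omega_{P,k}$ with $|I_P||\omega_{P,k}|\simeq 1$, producing a schematic model sum
\[
\langle Tf,\vec g\rangle \;\approx\; \sum_{P,k}|I_P|^{-1/2}\langle f,\phi_{P,k}\rangle\,\langle g_k,\psi_{P,k}\rangle
\]
modulo rapidly-decaying error terms, where the wave packets enjoy rapid decay off $I_P$.

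Next, I run an iterative stopping-time argument in the spatial variable. Fix a large dyadic interval $I_0$ containing $\supp f\cup\supp\vec g$ and define the exceptional set
\[
E_{I_0}:=\bigl\{x\in I_0:M_2 f(x)>C\langle f\rangle_{2,I_0}\bigr\}\cup\bigl\{x\in I_0:M|\vec g|(x)>C\langle|\vec g|\rangle_{I_0}\bigr\},
\]
where $M_2 f:=M(|f|^2)^{1/2}$. Weak-type bounds for the Hardy--Littlewood maximal function yield $|E_{I_0}|\le\tfrac{5}{6}|I_0|$ for sufficiently large $C$. Let $\{I^j\}$ be the maximal dyadic subintervals of $I_0$ contained in $E_{I_0}$; these become the next generation of stopping intervals, and the residual set $E(I_0):=I_0\setminus\bigsqcup_j I^j$ has measure $\ge\tfrac{1}{6}|I_0|$.

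The heart of the proof, and the main obstacle I expect, is the \emph{local estimate}: the contribution to the model sum from tiles $P$ with $I_P\subseteq I_0$ but $I_P\not\subseteq I^j$ for all $j$ must be bounded by $K|I_0|\langle f\rangle_{2,I_0}\langle|\vec g|\rangle_{I_0}$. Heuristically, for such tiles $I_P$ has substantial overlap with $I_0\setminus E_{I_0}$, on which $f$ and $|\vec g|$ behave like bounded truncations with $L^\infty$-norms $\lesssim\langle f\rangle_{2,I_0}$ and $\lesssim\langle|\vec g|\rangle_{I_0}$, respectively. Cauchy--Schwarz combined with Rubio de Francia's $L^2$-inequality \eqref{un:gen} then suggests
\[
\bigl|\text{local part}\bigr|\;\lesssim\;\bigl\|T(f\mathbf 1_{I_0\setminus E_{I_0}})\bigr\|_{L^2}\,\bigl\|\vec g\,\mathbf 1_{I_0\setminus E_{I_0}}\bigr\|_{L^2(\ell^2)}\;\lesssim\;|I_0|\,\langle f\rangle_{2,I_0}\,\langle|\vec g|\rangle_{I_0}.
\]
Making this rigorous requires carefully handling the wave-packet tails outside $I_0\setminus E_{I_0}$, exploiting the rapid decay of $\phi_{P,k}$ and $\psi_{P,k}$ together with the finite-overlap condition \eqref{overl}. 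This is precisely the tree-and-forest estimate adapted from \cite{Be}, and it is where the bulk of the technical work resides.

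Finally, I iterate the construction on each $I^j$, treating it as a new root and reapplying the stopping step, and collect all selected intervals into the sparse family $\mathcal{S}$. The residual sets $E(I)$ are pairwise disjoint with $|E(I)|\ge\tfrac{1}{6}|I|$, yielding the $\tfrac{1}{6}$-sparsity. Summing the local estimates along the stopping tree produces the claimed sparse domination, with a constant $K$ independent of $N$ thanks to the uniformity of the tree estimates in the finite family $\Omega$.
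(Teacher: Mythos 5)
Your overall architecture coincides with the paper's (wave-packet model sum as in \cite{Be}, stopping intervals generated by $M_2f$ and $M|\vec g|$ as in \cite{CDpO}, iteration to build a $\tfrac16$-sparse family), but the step you yourself call the heart of the proof --- the local estimate for the tiles whose interval $I_P$ is not contained in any stopping interval --- is left heuristic, and the mechanism you propose for it fails. That good-tile portion of the model sum is \emph{not} the bilinear form of the truncated functions $f\mathbf 1_{I_0\setminus E_{I_0}}$, $\vec g\,\mathbf 1_{I_0\setminus E_{I_0}}$: a good tile only satisfies $I_P\not\subset I^j$ for every stopping interval $I^j$, so $I_P$ may strictly contain many stopping intervals and hence a large part of the exceptional set, where $f$ has no $L^\infty$ control; the coefficients $\langle f,\phi_P\rangle$ see that part of $f$ at zero spatial separation, so ``rapid decay off $I_P$'' cannot dispose of it, and Cauchy--Schwarz plus Rubio de Francia's $L^2$ inequality applied to the truncations bounds a different quantity. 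What actually rescues the argument is that the stopping condition controls weighted local $L^2$ averages of the \emph{full} $f$ on good tiles (each such $I_P$ contains a point where $M_2(f\mathbf 1_{3Q})\le C\langle f\rangle_{2,3Q}$), i.e.\ a bound on the vectorial size (Lemma \ref{lem:vsaver}) and on the dual $L^1$-type size of $|\vec g|$ (Lemma \ref{lem:dsaver}); converting size bounds into $K|Q|\langle f\rangle_{2,3Q}\langle|\vec g|\rangle_{3Q}$ then needs the single vectorial tree estimate (Lemma \ref{lem:local}), the energy/mass forest decompositions (Corollaries \ref{corenergy} and \ref{cormass}), and the convexity trick $|I_T|=|I_T|^{\theta_1+\theta_2}$ with $\theta_1<\tfrac12$, which is precisely how Proposition \ref{prop:claim1} produces the $L^1$ average $\langle|\vec g|\rangle_{3Q}$ rather than an $L^2$ average (a blunt Bessel/Cauchy--Schwarz over tiles only gives $\langle f\rangle_{2}\,\langle|\vec g|\rangle_{2}$, the wrong sparse form, and would not yield the stated $A_{p/2}$ corollaries). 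None of this appears in your sketch beyond the pointer to ``the tree-and-forest estimate adapted from \cite{Be}'', so the central lemma is assumed rather than proved, and the route you indicate for it would not go through as stated.

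A second, smaller gap is in the recursion itself: when you pass from $I_0$ to a stopping child $I$, the tile sum over $\po_{\le}(I)$ still involves the full functions, not $f\mathbf 1_{3I},\vec g\,\mathbf 1_{3I}$, so one cannot simply ``reapply the stopping step'' on $I$. One must split each such term into in/out parts and show that whenever at least one function is restricted to $\R\setminus 3I$ the contribution is $\lesssim |I|\inf_{3I}M_2f\cdot\inf_{3I}M_1|\vec g|\lesssim |I|\langle f\rangle_{2,3Q}\langle|\vec g|\rangle_{3Q}$, using the wave-packet decay together with \eqref{eq:maxi}; this is the content of Lemmas \ref{lem:claim2} and \ref{lem:claim3} (and Lemma \ref{lem:recur}) in the paper. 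This part is fixable along the lines you gesture at, but it is missing, whereas the local (good-tile) estimate requires the size/tree machinery and not the truncation argument you propose.
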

Here in the above and in what follows, $\vec g\in C_0^{\infty}(\R)$ means that each $g_k$ is in $C_0^{\infty}(\R)$.
\begin{remark}\label{sharp:sparse}  We would like to remark here that the sparse domination in Theorem~\ref{thm:main} is sharp in the sense that the $L^2$-average $\langle f\rangle_{2,I}$ cannot be replaced by the $L^q$-average $\langle f\rangle_{q,I}$ with $q<2$, because it would imply strong $(p,p)$ boundedness of the Rubio de Francia's square function for $p<2,$ which is known to be invalid, as discussed previously  in Section~\ref{subsec:un}. 
\end{remark}
Theorem \ref{thm:main} implies the following quantitative weighted bounds for the operator $T$.

\begin{corollary}
 \label{cor:what0}
 For $2<p<\infty$ and any $w\in A_{p/2}$
 $$
 \|T\|_{L^p(w)\to L^{p}(w)}\lesssim [w]_{A_{p/2}}^{\max\big(\frac{1}{p-2},1\big)}.
 $$
  In particular, for $3\le p<\infty$, we have that $\max\big(\frac{1}{p-2},1\big)=1$ is the sharp exponent in this range.
 \end{corollary}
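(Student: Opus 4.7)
The plan is to deduce Corollary~\ref{cor:what0} from Theorem~\ref{thm:main} by appealing to the general quantitative weighted estimates for positive sparse bilinear forms (as developed in \cite{CDpO} and \cite{BFP}), and to establish sharpness at $p\ge 3$ via a power-weight construction. First, by the $L^p(w)$--$L^{p'}(\sigma)$ duality with $\sigma:=w^{1-p'}$, the operator norm $\|T\|_{L^p(w)\to L^p(w)}$ is controlled by the supremum of $|\langle Tf,\vec g\rangle|$ over unit balls. Theorem~\ref{thm:main} bounds this pairing by the $(2,1)$-sparse form $K\sum_{I\in\mathcal S}|I|\langle f\rangle_{2,I}\langle|\vec g|\rangle_{1,I}$, so the task reduces to a quantitative weighted bound for this sparse form.

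I would then invoke the general sparse $(r,s)$-form estimate with $(r,s)=(2,1)$: for $r<p<s'$ and $w\in A_{p/r}$,
$$
\sum_{I\in\mathcal S}|I|\langle f\rangle_{r,I}\langle h\rangle_{s,I}\lesssim [w]_{A_{p/r}}^{\max\left(\frac{1}{p-r},\,\frac{s'-1}{s'-p}\right)}\|f\|_{L^p(w)}\|h\|_{L^{p'}(w^{1-p'})}.
$$
Since $s=1$ gives $s'=\infty$ and the second entry of the maximum collapses to $1$, one lands on exponent $\max(1/(p-2),1)$ on $[w]_{A_{p/2}}$, which is precisely the desired bound. The proof of the sparse-form estimate proceeds by a principal-cubes (stopping-time) decomposition of $\mathcal S$ adapted to the densities $F:=|f|^p w$ and $G:=|\vec g|^{p'}\sigma$, combined with the Carleson embedding theorem and the sharp Buckley bound for the Hardy--Littlewood maximal operator; the geometric control provided by the stopping-time levels is essential to produce a \emph{maximum} rather than a sum of the two extremal exponents.

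For the sharpness at $p\ge 3$, I would construct a saturating family of weights and test functions. A natural choice is to take $\Omega$ to be a lacunary sequence, a power weight $w_\alpha(x)=|x|^\alpha$ with $\alpha$ chosen close to the borderline for $A_{p/2}$, and $f$ adapted (in both time and frequency) to the lacunary geometry. The classical sharpness constructions for the lacunary square function (Theorem~\ref{qw:lerner} supplies the linear $A_p$ lower bound for $p\ge 3$, which one transfers to the $A_{p/2}$ scale by fixing the Muckenhoupt exponent via power-weight scaling) then deliver a lower bound of the form $\|Tf\|_{L^p(w_\alpha)}\gtrsim[w_\alpha]_{A_{p/2}}\|f\|_{L^p(w_\alpha)}$, realizing the sharp exponent $1$ for $p\ge 3$.

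The main obstacle is the sparse-to-weighted step with the correct exponent. A naive H\"older inequality at each scale followed by weighted maximal function bounds yields only the suboptimal exponent $1/(p-2)+1$; obtaining the sharp $\max(1/(p-2),1)$ requires the more refined principal-cubes argument of \cite{CDpO}, balancing the $A_{p/2}$ characteristic against the $A_\infty$ contribution from the dual weight. The sharpness construction at $p\ge 3$ is of classical flavor but must be arranged so that the $A_{p/2}$ characteristic blows up jointly with the weighted operator norm; naively chosen power weights keep $[w]_{A_p}$ bounded while $[w]_{A_{p/2}}\to\infty$, so the test function has to exploit the square-function structure rather than a single Fourier projection.
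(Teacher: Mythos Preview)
Your approach to the upper bound is essentially the same as the paper's: apply Theorem~\ref{thm:main} and then invoke the quantitative weighted bound for the $(2,1)$-sparse form from \cite{BFP} (stated as Theorem~\ref{lem:BFP} in the paper), which with $p_0=2$, $q_0=\infty$ gives exactly the exponent $\max\big(\tfrac{1}{p-2},1\big)$ on $[w]_{A_{p/2}}$.

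The sharpness argument is where you diverge from the paper, and where there is a genuine gap. The paper does \emph{not} construct explicit power weights. Instead it invokes the abstract lower bound of Frey--Nieraeth \cite[Theorem~5.2]{FN} (Theorem~\ref{thm:con} here): if $\|T\|_{L^p(w)\to L^p(w)}\lesssim[w]_{A_{p/2}}^\beta$ for all $w\in A_{p/2}$, then necessarily $\beta\ge\gamma_T(\infty)$, where $\gamma_T(\infty)$ is the growth exponent of the unweighted operator norm as $p\to\infty$. Bourgain's sharp estimate $\|T\|_{L^p\to L^p}\simeq p$ gives $\gamma_T(\infty)=1$, forcing $\beta\ge 1$, which matches the upper bound for $p\ge 3$. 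No explicit weight is ever built.

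Your proposed transfer of Lerner's lacunary lower bound (Theorem~\ref{qw:lerner}) from the $A_p$ scale to the $A_{p/2}$ scale does not work in the direction you need: since $[w]_{A_{p/2}}\ge[w]_{A_p}$, a lower bound $\|T\|_{L^p(w)}\gtrsim[w]_{A_p}$ says nothing about $\|T\|_{L^p(w)}\gtrsim[w]_{A_{p/2}}$. You yourself flag this in your last paragraph---a power weight pushed to the $A_{p/2}$ threshold keeps $[w]_{A_p}$ bounded---but you do not say how to overcome it. The ``square-function structure'' you allude to would have to be made precise, and in fact the standard mechanism for converting the unweighted blow-up $\|T\|_p\simeq p$ into a sharp $A_{p/2}$ lower bound is exactly the Rubio de Francia extrapolation argument that underlies \cite[Theorem~5.2]{FN}, not an explicit power-weight computation.
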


For $3\le p< \infty,$ the sharpness of the exponent $\alpha_p:=\max\big(\frac{1}{p-2},1\big)=1$ is immediate in view of  \cite[Theorem 5.2]{FN}. However we cannot guarantee the sharpness of $\alpha_p$ for $2< p<3$. Concerning this, we develop a discussion in Section \ref{sec:Qweight}. By \cite[Theorem 1.4]{FN} we also obtain, as an immediate corollary of the sparse domination, quantitative weighted weak type estimates at the end-point $p=2$. 
 \begin{corollary}
 \label{cor:whatw}
 For $w\in A_{1}$
 $$
 \|T\|_{L^2(w)\to L^{2,\infty}(w)}\lesssim [w]_{A_1}^{\frac{1}{2}}[w]_{A_{\infty}}^{\frac{1}{2}}\log(e+[w]_{A_{\infty}}).
 $$
 \end{corollary}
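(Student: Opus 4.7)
The plan is to invoke the sparse-to-weights black box of Frey--Nieraeth on top of Theorem~\ref{thm:main}. The cited \cite[Theorem~1.4]{FN} takes as input any $(p_1,p_2)$-sparse bilinear form domination and produces a weighted weak-$L^{p_1}$ endpoint estimate with sharp dependence on $[w]_{A_1}$ and $[w]_{A_\infty}$ of the form $[w]_{A_1}^{1/p_1}[w]_{A_\infty}^{1/p_1'}\log(e+[w]_{A_\infty})$. In our setting the averaging exponents in Theorem~\ref{thm:main} are $p_1=2$ (on $f$) and $p_2=1$ (on $|\vec g|$), so the endpoint $p=p_1$ is exactly $p=2$ and the quantitative factor collapses to the one displayed in the statement.

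To set up the reduction, by the standard restricted weak-type characterization of $\|\cdot\|_{L^{2,\infty}(w)}$ it suffices to show
$$
\int_F Tf(x)\,w(x)\,dx \lesssim [w]_{A_1}^{1/2}[w]_{A_\infty}^{1/2}\log(e+[w]_{A_\infty})\,\|f\|_{L^2(w)}\,w(F)^{1/2}
$$
for every measurable $F$ with $0<w(F)<\infty$. A measurable selection in the $\ell^2$-duality $Tf(x)=\sup_{|\vec h(x)|\le 1}\sum_k (f\ast\check{{\bf 1}}_{\omega_k})(x)\,h_k(x)$ produces $\vec h$ with $|\vec h|\le 1$ such that $\int_F Tf\cdot w\,dx=\langle Tf,\vec h\,{\bf 1}_F\,w\rangle$. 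Setting $\vec g:=\vec h\,{\bf 1}_F\,w$, which satisfies $|\vec g|\le w\,{\bf 1}_F$, and applying Theorem~\ref{thm:main} gives
$$
\int_F Tf\cdot w\,dx \le K\sum_{I\in\mathcal{S}}|I|\,\langle f\rangle_{2,I}\,\langle w\,{\bf 1}_F\rangle_{I}.
$$

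The remaining task is the purely weighted sparse-form bound
$$
\sum_{I\in\mathcal{S}}|I|\,\langle f\rangle_{2,I}\,\langle w\,{\bf 1}_F\rangle_{I}\lesssim [w]_{A_1}^{1/2}[w]_{A_\infty}^{1/2}\log(e+[w]_{A_\infty})\,\|f\|_{L^2(w)}\,w(F)^{1/2},
$$
which is precisely the content of \cite[Theorem~1.4]{FN} applied with $p_1=2$, $p_2=1$, and our $\tfrac{1}{6}$-sparse family. The main obstacle---extracting the $\log(e+[w]_{A_\infty})$ factor from the principal-cube decomposition and a Carleson embedding at the $A_1$ endpoint---has already been resolved in \cite{FN}; our sole responsibility is to verify that Theorem~\ref{thm:main} fits into that abstract framework, which it does verbatim. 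Putting the three displays together yields the corollary.
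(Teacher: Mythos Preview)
Your proof is correct and follows exactly the same route as the paper: both simply feed the sparse domination of Theorem~\ref{thm:main} (with exponents $p_0=2$, $q_0'=1$) into \cite[Theorem~1.4]{FN} to read off the endpoint weighted weak-type bound. Your version merely makes the $\ell^2$-duality and the $L^{2,\infty}$ testing against indicators explicit, whereas the paper leaves these hidden inside the black-box citation.
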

 We do not know whether the estimate above is sharp.
The qualitative version of the estimate in Corollary \ref{cor:whatw} was already proved in \cite[Theorem B (ii)]{K}.

The paper is organized as follows: In Section~\ref{sec:sparse} we introduce preliminary notion required for the time-frequency analysis and describe the model operator for the Rubio's square function. The proof establishing the sparse domination for the model operator, which is mainly the content of Theorem~\ref{thm:main2}, is given in Section~\ref{sec:proofm2}. Next, in Section~\ref{sec:Qweight} we discuss  the sharpness of exponents obtained in Corollary~\ref{cor:what0}. Finally, in the same section we develop a discussion about Rubio's conjecture. 

\section{Time-frequency analysis for Rubio's square function}
\label{sec:sparse}

We perform a time-frequency analysis of the bilinear form associated with the Rubio de Francia's Littlewood--Paley operator $\langle Tf, \vec g\rangle$. Most of the current section is standard and we borrowed its content from \cite{Be}, with the suitable modifications in our context. We also refer the reader to \cite{Lacey1, LT1, LT2, MTT, Th} for more information about the topic.

\begin{definition}[Tile] A \textit{tile} $P$ is a rectangle $P=I_P\times \omega_P$ of area one with the property that $I_P,\omega_P\in \mathcal{D}$ or $\omega_P$ is in a shifted variant of $\mathcal{D}$.
\end{definition}
\begin{definition}[Order relation on tiles] Given two tiles $P$ and $P'$ we say that $P<P'$ if $I_P\subsetneq I_{P'}$ and $\omega_{P'}\subset 3 \omega_{P}$. Further, we say that $P\leq P'$ if $P<P'$ or $P=P'$. 
\end{definition}

For any interval $I\subset \R$, define the smooth localized variant of the characteristic function ${\bf 1}_I$ by  
\begin{equation}
\label{eq:chi}
\widetilde{\chi}_I(x):=\Big(1+\frac{\dist(x,I)}{|I|}\Big)^{-100}.
\end{equation}
\begin{definition}[$L^1$-normalized wave packets]
Let $P=I_P\times \omega_P$ be a tile. An $L^1$- normalized wave packet on $P$ is a smooth function $\phi_P$ which has Fourier support in the frequency interval $\omega_P$ and is $L^1$-adapted to the time interval $I_P$ in the sense that  
$$
|\phi_P^{(n)}(x)|\le C_{n,M}\frac{1}{{|I_P|^{n+1}}}\frac{1}{\big(1+\frac{\dist(x,I_P)}{|I_P|}\big)^M}
$$
for sufficiently many derivatives $n$, and a large number $M$.
\end{definition}
Let $ \mathbb{W}_{k}$ denote the Whitney decomposition of $w_k=[a_k,b_k]$ with respect to its endpoints. More precisely, each $J\in  \mathbb{W}_{k}$ is the maximal dyadic interval contained in $[a_k,b_k]$ with the property that $\operatorname{dist}(J,a_k)\ge |J|$ and $\operatorname{dist}(J,b_k)\ge |J|$. We need to consider tiles which have frequency intervals associated with the collection of intervals $\{w_k\}_{k=1}^N$. Therefore, for each 
$k,~1\leq k\leq N$, consider $\po_k$ to be the collection of tiles $P=I_P\times \omega_P$ such that $\omega_P \in \mathbb{W}_{k}$. 
The collection $\po=\cup_{k=1}^N \po_k$ is the complete collection of tiles which will play a role in the time-frequency decomposition of the operator. 

We can write the collection $\po_k$ as a frequency translate of a single collection in the following sense (see \cite[Remark 22]{Be}). Consider the frequency interval of reference $[0,L]$, where $L:=\max\limits_{1\leq k\leq N}|a_k-b_k|$. Let $\po_0$ be the collection of tiles associated to this interval consisting of tiles of the form 
$$
P=I\times \omega, \quad \text{ where } I \text{ is a dyadic interval, and } \omega=\Big[\frac{1}{2|I|},\frac{1}{|I|}\Big].
$$
Then for any $1\le k\le N$, the collection $\po_k$ can be written as the frequency translate 
 $\po_k=(\po_0+\nu_k)\cap \po_k$. This representation would be helpful in considering the vectorial tree structure on the collection of tiles under consideration.  

From now on, we shall assume that the collection of tiles $\po_k$ is finite, while the frequencies of the intervals are lacunary with respect to the $a_k$'s only. 
Following \cite{OSTTW}, one can write
$$
{\bf 1}_{[a_k,b_k]}(\xi)\widehat{f}(\xi)=\sum_{P\in \po_k}|I_P| \langle f,\phi_{P}\rangle \widehat{\phi_{P}}(\xi).
$$
The model bilinear form associated to the Rubio de Francia square function is given by 
$$
\Lambda_{\po} (f,\vec g)=\langle Tf, \vec g\rangle =\sum_{k=1}^N \sum_{P\in \po_k}|I_P|\langle f,\phi_{P}\rangle \langle g_k, \phi_{P}\rangle.
$$

We shall prove estimates for the bilinear form (with same notation) with absolute values of coefficients, i.e. 
$$
\Lambda_{\po} (f,\vec g)=\sum_{k=1}^N \sum_{P\in \po_k}|I_P||\langle f,\phi_{P}\rangle| |\langle g_k, \phi_{P}\rangle|.
$$ 
This does not cause any additional difficulty because this may be thought of as a new bilinear form with $g_k$ replaced by $\epsilon_k g_k$ for  $\epsilon_k\in \{\pm 1\}$.  Further, for a subcollection of tiles $\qo\subseteq \po$, the associated bilinear form may be defined in a standard way and will be denoted by $\Lambda_{\qo} (f,\vec g)$.  

In order to prove Theorem \ref{thm:main}, it suffices to prove the following result, whose proof is given in Section \ref{sec:proofm2}.
\begin{theorem}
\label{thm:main2}
Let $f,\vec g$ be $ C_0^{\infty}(\R)$ functions. Then there exist a positive constant $K$ and a $\frac16$-sparse collection $\mathcal{S}$ such that
$$
\Lambda_{\po}(f,\vec g)\le K \sum_{I\in \mathcal{S}} |I|\langle f\rangle_{2,I}\langle |\vec g|\rangle_{I}.
$$
where the sparse collection $\mathcal{S}$ depends on $f$ and $\vec g$. 
\end{theorem}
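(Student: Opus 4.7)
The plan is to prove Theorem \ref{thm:main2} by an iterative stopping-time decomposition of $\po$, building the sparse family $\mathcal{S}$ recursively from the top dyadic intervals of successive generations of vectorial trees. The strategy follows the time-frequency decomposition of \cite{Be} combined with the sparse selection principle of \cite{CDpO}; working directly with size functionals on vectorial trees is the simplification announced in the introduction, since it bypasses the outer-$L^p$ Carleson embeddings of the latter work.

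First I would introduce two size functionals on a subcollection $\qo\subseteq\po$: namely $\ds(\qo)$, the supremum over vectorial trees $\vec{\T}\subseteq\qo$ of $|I_{\vec{\T}}|^{-1/2}\bigl(\sum_{P\in\vec{\T}}|\langle f,\phi_P\rangle|^2\bigr)^{1/2}$, and $\vs(\qo)$, defined analogously with $g_k,\phi_P$ in place of $f,\phi_P$ and an extra sum over $k$. Two ingredients need to be established: a \textbf{tree estimate} $\Lambda_{\vec{\T}}(f,\vec g)\le|I_{\vec{\T}}|\,\ds(\{\vec{\T}\})\,\vs(\{\vec{\T}\})$, obtained by Cauchy--Schwarz, and a \textbf{John--Nirenberg-type decomposition} which, given a threshold $\sigma$, writes the ``large-size'' part of $\qo$ as a disjoint union of trees whose top intervals $\{I_{\vec{\T}_j}\}$ satisfy $\sum_j|I_{\vec{\T}_j}|\lesssim\sigma^{-2}\|f\|_2^2$ (resp.\ $\sigma^{-2}\|\vec g\|_2^2$). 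A Bessel-type argument together with the finite-overlap hypothesis (\ref{overl}) yields the global control $\ds(\qo)\lesssim\langle f\rangle_{2,I}$ and $\vs(\qo)\lesssim\langle|\vec g|\rangle_{2,I}$ whenever all tiles of $\qo$ satisfy $I_P\subseteq I$; the overlap hypothesis is precisely what lets the vectorial Bessel step produce the $L^2$ average of $|\vec g|$ rather than a cruder sum.

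The iterative selection then proceeds as follows. Let $I_0$ be a dyadic interval containing the supports of $f,\vec g$ and all $I_P$ with $P\in\po$. Let $\mathcal{B}(I_0)$ be the collection of maximal dyadic $J\subsetneq I_0$ such that either $\langle f\rangle_{2,J}>C\langle f\rangle_{2,I_0}$ or $\langle|\vec g|\rangle_J>C\langle|\vec g|\rangle_{I_0}$, with $C$ large enough that Chebyshev yields $\sum_{J\in\mathcal{B}(I_0)}|J|\le\tfrac{5}{6}|I_0|$. Decompose $\po$ into $\qo^{\mathrm{top}}:=\{P:I_P\not\subseteq J\text{ for any }J\in\mathcal{B}(I_0)\}$ and residual pieces $\qo^J:=\{P:I_P\subseteq J\}$ for $J\in\mathcal{B}(I_0)$. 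The sizes of $\qo^{\mathrm{top}}$ are controlled by the top averages, and a dyadic splitting over size thresholds, combined with the John--Nirenberg counts and the tree estimate, gives
$$\Lambda_{\qo^{\mathrm{top}}}(f,\vec g)\lesssim|I_0|\,\langle f\rangle_{2,I_0}\,\langle|\vec g|\rangle_{I_0}.$$
We then add $I_0$ to $\mathcal{S}$ with major set $E_{I_0}:=I_0\setminus\bigcup_{J\in\mathcal{B}(I_0)}J$, which has $|E_{I_0}|\ge\tfrac{1}{6}|I_0|$, and recurse on each pair $(\qo^J,J)$. Finiteness of $\po$ terminates the recursion and delivers the desired $\frac{1}{6}$-sparse family.

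The main obstacle is the asymmetric tree-sum step producing the $L^1$ average $\langle|\vec g|\rangle_{I_0}$ rather than the $L^2$ average $\langle|\vec g|\rangle_{2,I_0}$ that Cauchy--Schwarz alone would give. Rubio de Francia's $p\ge 2$ threshold, reflected in Remark \ref{sharp:sparse}, \emph{forces} this asymmetry, so two tree estimates must be interpolated: the Bessel-based John--Nirenberg count for $\vec g$ against a finer stopping-time count that exploits $\langle|\vec g|\rangle_I\le\langle|\vec g|\rangle_{2,I}$ on subintervals together with the finite-overlap hypothesis (\ref{overl}), in order to convert square-summed $\vec g$-mass on residual size pieces into $\|\vec g\|_1$-type bounds. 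Balancing the two counts across the dyadic size decomposition so that the geometric series converges to $|I_0|\langle f\rangle_{2,I_0}\langle|\vec g|\rangle_{I_0}$ is the technical heart of the argument.
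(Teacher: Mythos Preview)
Your iterative scheme and the overall architecture are the same as the paper's, but the ``main obstacle'' you flag is a genuine gap, and the fix you sketch does not work. With both sizes defined as $L^2$ square-function sizes and the tree estimate coming from Cauchy--Schwarz, the John--Nirenberg counts for both $f$ and $\vec g$ are quadratic, $\sum_{\vec T}|I_{\vec T}|\lesssim 2^{2n}\lambda^{-2}\|\cdot\|_2^2$. Interpolating the two counts with weights $\theta_1+\theta_2=1$ then forces simultaneously $\theta_1<\tfrac12$ and $\theta_2<\tfrac12$ for the geometric sums over levels to converge, which is impossible. The inequality $\langle|\vec g|\rangle_I\le\langle|\vec g|\rangle_{2,I}$ that you invoke points in the wrong direction: you need to bound the form by the \emph{smaller} $L^1$ average, so this cannot help. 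There is no generic way to ``convert square-summed $\vec g$-mass into $\|\vec g\|_1$-type bounds'' at this stage.

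What the paper does instead is to change the size for $\vec g$ from the outset. Alongside the $L^2$ vectorial size for $f$, it uses an $L^1$-based \emph{dual size} $\widetilde{\operatorname{size}}_{\po}(|\vec g|)=\sup_{I'}\frac{1}{|3I'|}\int|\vec g|\,\widetilde\chi_{3I'}$ (Definition~\ref{def:dual}), and the single vectorial tree estimate (Lemma~\ref{lem:local}, taken from \cite{Be}) reads $\Lambda_{\vec T}(f,\vec g)\lesssim \overrightarrow{\operatorname{size}}_{\vec T}(f)\cdot\widetilde{\operatorname{size}}_{\vec T}(|\vec g|)\cdot|I_T|$. This is not Cauchy--Schwarz; it uses the tree structure. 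Crucially, the decomposition lemma for this dual size (Lemma~\ref{dds}) gives the \emph{linear} count $\sum_{\vec T}|I_T|\lesssim \lambda^{-1}\||\vec g|\|_1$, and now the interpolation requires only $\theta_1<\tfrac12$ and $\theta_2<1$, which is compatible (see \eqref{eq:cuenta}). A second point you pass over is that in the recursion to $(\qo^J,J)$ the functions are no longer supported in $3J$; the paper handles this with an in/out splitting (Lemmas~\ref{lem:claim2} and \ref{lem:claim3}) that uses the spatial decay of the wave packets $\phi_P$ to absorb the ``out'' pieces back into the top-level term $|Q|\langle f\rangle_{2,3Q}\langle|\vec g|\rangle_{3Q}$.
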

Next, we recall several definitions in order to perform time-frequency analysis for the model operator $\Lambda_{\po}(f,\vec g)$ (see \cite{Be} for more details). 
\begin{definition}
A subcollection $T$ of tiles is called a \textit{tree} with \textit{top} $P_T$ if there exists a tile $P_T=I_T\times \omega_T$ and a frequency point $\xi_T\in \omega_T$ with the property that 
$$
P\leq P_T, \quad \text{ and } \xi_T\in 7\omega_P\quad \text{ for every } P\in T.
$$
\end{definition}

\begin{definition}
Let $\{\po_k\}_k$ be as above. We say that $\vec{T}\subset \po=\bigcup_k \po_k$ is a \textit{vectorial tree} if there exists a tree $T_0\subset \po_0$ so that for every $1\le k\le N$,
$$
T_k:=\vec{T}\cap \po_k=\nu_k+T_0\quad \text{ is a frequency translation of } T_0.
$$
\end{definition}

The classical ``sizes'' are defined as follows. 
\begin{definition}[Vectorial size]
$$
\overrightarrow{\operatorname{size}}_{\po}(f):=\sup_{\substack{\vec{T}\subset \po\\
\textrm{ vectorial tree }}}\Big(\frac{1}{|I_T|}\sum_{k=1}^N\sum_{P\in T_k}|I_P||\langle f, \phi_P\rangle|^2\Big)^{1/2},
$$
where the supremum is taken over vectorial trees 
$\vec{T}\subset \po$.
\end{definition}

\begin{definition}[Dual size]
\label{def:dual}
$$
\widetilde{\operatorname{size}}_{\po}(g):=\sup_{I'\in \mathcal{J}^+_{\po}(I_T)}\frac{1}{|3I'|}\int_\R|g(x)|\widetilde{\chi}_{3I'}(x)\,dx,
$$
where $\widetilde{\chi}_I$ was defined in \eqref{eq:chi}. Here, given a collection $\po$ of tiles, we denote by $\mathcal{J}_{\po}^+$ the collection of dyadic intervals $I'$ which contain some $I_P$, with $P\in \po$. 
\end{definition}
We have the following estimate for vectorial size.

\begin{proposition}[\cite{Be} Proposition 24] \label{size} For $f\in L^2_{\it {loc}}$, we have 
$$
\overrightarrow{\operatorname{size}}_{\po}(f)\lesssim \sup_{P\in \po}\left(\frac{1}{|I_P|}\int_\R|f(x)|^2\widetilde{\chi}^M_{I_P}(x)\,dx \right)^{\frac{1}{2}},
$$
for some $M\ge1$.
\end{proposition}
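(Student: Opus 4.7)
The plan is to fix a vectorial tree $\vec T \subset \po$ with top tile $P_T = I_T \times \omega_T$ and reduce the proposition to proving the localized estimate
$$
\sum_{k=1}^N \sum_{P \in T_k} |I_P|\, |\langle f, \phi_P\rangle|^2 \lesssim \int_\R |f(x)|^2 \widetilde{\chi}_{I_T}^M(x)\,dx,
$$
since dividing by $|I_T|$ and taking square roots, together with the choice $P = P_T \in \po$ on the right-hand side of the proposition, delivers the claimed inequality after passing to the supremum over all vectorial trees.

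The central observation is that the family $\{\omega_P\}_{P \in \vec T}$ is pairwise disjoint. Within a single tree $T_k$, the intervals $\omega_P$ lie in the Whitney decomposition $\mathbb W_k$ and are disjoint by construction; across indices $k \neq k'$ the inclusions $\omega_P \subset \omega_k$ and $\omega_{P'} \subset \omega_{k'}$ combined with $\omega_k \cap \omega_{k'} = \emptyset$ give the disjointness. By Plancherel this forces $\langle \phi_P, \phi_Q \rangle = 0$ for distinct $P,Q \in \vec T$, so the renormalized wave packets $\{|I_P|^{1/2}\phi_P\}_{P \in \vec T}$ form an orthogonal system whose individual $L^2$-norms are controlled by an absolute constant (using the $L^1$-normalization and decay of $\phi_P$). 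Bessel's inequality then immediately yields the unweighted version
$$
\sum_{P \in \vec T} |I_P|\,|\langle f, \phi_P\rangle|^2 \lesssim \|f\|_2^2.
$$
To upgrade this to the weighted localized bound I would decompose $f = f\eta + f(1-\eta)$, where $\eta$ is a smooth cutoff with $\eta \equiv 1$ on $2I_T$ and $\supp \eta \subset 3I_T$. Bessel applied to $f\eta$ gives $\sum_P |I_P| |\langle f\eta, \phi_P\rangle|^2 \lesssim \|f\eta\|_2^2 \lesssim \int |f|^2\widetilde{\chi}_{I_T}^M$ since $\widetilde{\chi}_{I_T}^M \gtrsim 1$ on $3I_T$. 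For the tail $f(1-\eta)$, one uses that $I_P \subset I_T$ implies $\widetilde{\chi}_{I_P}(x) \leq \widetilde{\chi}_{I_T}(x)$ off $2I_T$, together with the pointwise decay $|\phi_P(x)| \lesssim |I_P|^{-1}\widetilde{\chi}_{I_P}^M(x)$, to absorb the tail contributions into $\int |f|^2 \widetilde{\chi}_{I_T}^M$.

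The main technical obstacle is the tail estimate: a naive term-by-term Cauchy--Schwarz on $\langle f(1-\eta),\phi_P\rangle$ loses a factor proportional to $|\vec T|$, which can grow like $N$ times the number of dyadic scales spanned by $\vec T$, whereas the conclusion must be independent of $N$. The remedy is to preserve the frequency orthogonality after introducing the spatial weight, for instance by running a Cotlar--Stein/Schur test on the weighted system $\{|I_P|^{1/2}\phi_P\widetilde{\chi}_{I_T}^{-M/2}\}_{P \in \vec T}$. Near-orthogonality of this system is plausible because multiplication by $\widetilde{\chi}_{I_T}^{-M/2}$ smears Fourier supports only at the scale $|I_T|^{-1}$, which is no larger than the minimum length $|\omega_P| = |I_P|^{-1} \geq |I_T|^{-1}$ of the original disjoint frequency intervals, so that the Gram matrix of the weighted packets remains essentially diagonal with off-diagonal entries controlled by the mismatch $|I_P|/|I_T|$. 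Making this estimate quantitative (with $M$ taken large enough to absorb the polynomial overhead) produces the desired localized Bessel-type bound with constant independent of $N$ and hence the proposition.
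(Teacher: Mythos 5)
Note first that the paper does not actually prove this proposition; it is imported verbatim from \cite{Be} (Proposition 24), so your argument has to stand on its own, and its skeleton (reduce to a single vectorial tree, split $f$ into a piece near $I_T$ and a tail, Bessel for the near piece) is indeed the standard route. The near piece is fine: within each $T_k$ the Whitney frequency intervals are disjoint and $\||I_P|^{1/2}\phi_P\|_2\lesssim 1$, so Bessel gives the unweighted bound. But two of your supporting claims are not available as stated. First, the intervals $\omega_k$ of the family $\Omega$ are \emph{not} assumed pairwise disjoint in this paper; only the finite-overlap condition $\sum_k\mathbf{1}_{\omega_k}\le B$ holds, so ``$\omega_k\cap\omega_{k'}=\emptyset$'' is not given. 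This is repairable (split $\Omega$ into $O(B)$ disjoint subfamilies, or run Bessel with multiplicity $B$), but the constant must be traced through $B$, which your sketch never does. Second, in this paper a tree top $P_T$ is merely \emph{some} tile, not necessarily an element of $\po$, so the final step ``choose $P=P_T\in\po$'' is unjustified; moreover $\frac{1}{|I_T|}\int|f|^2\widetilde{\chi}^M_{I_T}$ is genuinely not dominated by $\sup_{P\in\po}\frac{1}{|I_P|}\int|f|^2\widetilde{\chi}^M_{I_P}$ when every tile of the tree is much shorter than $I_T$: mass of $f$ in the middle of $I_T$, far from all $I_P$, carries weight $|I_T|^{-1}$ on the left but only $(|I_T|/|I_P|)^{-100M}|I_P|^{-1}$ on the right. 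The statement survives only because the wave packets decay with an exponent $M'$ far larger than $100M$, so the reduction has to be made against the actual tiles (scale by scale), not against the top alone.

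The more serious gap is the tail. You correctly diagnose that term-by-term Cauchy--Schwarz loses a factor of order $N$ and that frequency almost-orthogonality must be retained after inserting the weight, but the proposed remedy --- Cotlar--Stein/Schur for $\{|I_P|^{1/2}\phi_P\widetilde{\chi}_{I_T}^{-M/2}\}$ --- is only asserted, and the heuristic offered for it fails as stated. The weight $\widetilde{\chi}_{I_T}^{-M/2}$ grows polynomially, so ``smearing Fourier supports at scale $|I_T|^{-1}$'' is not a literal statement about supports; adjacent Whitney intervals at the finest scale have zero frequency separation, so any smearing at all destroys exact orthogonality there; and across different $k$ the tiles of a vectorial tree are frequency translates with \emph{identical} spatial profiles, so the Gram entries cannot be made small by spatial decay alone --- the sum over $k$ must be controlled by counting frequency neighbours via the overlap constant $B$, which your sketch never invokes (indeed it replaces it by the unwarranted disjointness above). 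Since this weighted, $N$-independent Bessel inequality is precisely the content of the proposition --- everything else in your write-up is routine --- leaving it at the level of ``plausible'' leaves the proof incomplete. To close it you would need to estimate the Gram matrix entries $|I_P|^{1/2}|I_Q|^{1/2}|\langle\widetilde{\chi}_{I_T}^{-M}\phi_P,\phi_Q\rangle|$, gaining decay from frequency separation (integration by parts, available except for $O(B)$ neighbours per tile and scale) and from scale separation, with the wave-packet decay exponent taken much larger than $100M$, and then run the Schur test with those quantitative bounds.
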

Lemma \ref{lem:local} below contains estimates on single vectorial trees. The proof of this lemma follows by some modifications in the proof of \cite[Lemma 12]{Be}, see also \cite{MTTI, MTT}, hence we omit the details. Indeed, the proof may be obtained following the steps of the proof of \cite[Lemma 12]{Be} along with the duality argument for the mixed norm. 

\begin{lemma}[Localization Lemma/Single vectorial tree estimate]
\label{lem:local}
Let $\vec{T}\subset \po$ be a vectorial tree. Let $\vec{g}=\{g_k\}$ be an $\ell^2$ sequence of functions as earlier. Then
$$
\Lambda_{\vec{T}}(f,\vec{g})\lesssim \ds(|\vec{g}|)\cdot \vs(f)\cdot |I_T|.
$$
\end{lemma}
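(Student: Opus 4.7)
The plan is to adapt the scalar single-tree estimate of \cite[Lemma 12]{Be} to the vectorial setting via a mixed-norm duality argument. I would begin by applying Cauchy--Schwarz in the joint $(k,P)$ index, splitting the bilinear form into an $f$-factor and a $\vec g$-factor:
\[
\Lambda_{\vec T}(f,\vec g) \leq \Big(\sum_{k=1}^N\sum_{P\in T_k}|I_P||\langle f,\phi_P\rangle|^2\Big)^{\!1/2}\Big(\sum_{k=1}^N\sum_{P\in T_k}|I_P||\langle g_k,\phi_P\rangle|^2\Big)^{\!1/2}.
\]
The first factor is at most $|I_T|^{1/2}\vs(f)$ directly from the definition of the vectorial size, since $\vec T$ is itself a vectorial tree with spatial top $I_T$, and the supremum in $\vs$ is attained among such trees.

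It then remains to bound the second factor by $|I_T|^{1/2}\,\ds(|\vec g|)$. The first observation is that the $L^1$-normalization of $\phi_P$ and the pointwise inequality $|g_k|\leq |\vec g|$ yield the uniform bound
\[
|\langle g_k,\phi_P\rangle|\lesssim \frac{1}{|I_P|}\int_\R|\vec g|\,\widetilde{\chi}_{I_P}^M \lesssim \ds(|\vec g|),
\]
for $M$ sufficiently large, valid uniformly in $k$ and $P\in T_k$, because $I_P\in \mathcal{J}^+_{\po}(I_T)$. Extracting one power of $\ds(|\vec g|)$ from the square reduces matters to the $\ell^1$-type estimate
\[
\sum_{k=1}^N\sum_{P\in T_k}|I_P||\langle g_k,\phi_P\rangle|\lesssim |I_T|\,\ds(|\vec g|).
\]
This last inequality would be handled by rewriting the left-hand side via a mixed-norm duality as $\int_\R \langle|\vec g(x)|,\vec F(x)\rangle_{\ell^2}\,dx$, for an appropriate vector-valued kernel $\vec F=(F_k)_{k=1}^N$ built from the wave packets $\phi_P$ with suitable signs/phases, controlling $\|\vec F(x)\|_{\ell^2}\lesssim \widetilde{\chi}_{I_T}^M(x)$ pointwise, and concluding via Cauchy--Schwarz in $k$ together with the definition of $\ds$.

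I expect the main obstacle to be precisely the vectorial pointwise bound $\|\vec F(x)\|_{\ell^2}\lesssim \widetilde{\chi}_{I_T}^M(x)$. A naive $\ell^1$-triangle inequality across $k$ would introduce a factor of $N^{1/2}$, which must be avoided so that the estimate remains independent of the cardinality of $\Omega$. The correct manoeuvre, which is precisely the ``duality argument for the mixed norm'' alluded to in the paper, exploits the vectorial translation structure $T_k=T_0+\nu_k$ to reduce the $\ell^2$-sum in $k$ to a Plancherel-type estimate on the single reference tree $T_0$, and invokes the Whitney structure of the frequency intervals $\omega_P$ (together with the lacunarity of the frequencies $\nu_k$ with respect to the endpoints $a_k$) to obtain the required decay estimate. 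This is the delicate technical adaptation over the scalar argument in \cite[Lemma 12]{Be}, and the step most sensitive to the vector-valued nature of $\vec g$.
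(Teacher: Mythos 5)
Your opening Cauchy--Schwarz and the bound of the $f$-factor by $|I_T|^{1/2}\vs(f)$ are fine, but the treatment of the $\vec g$-factor has a genuine gap: both the intermediate $\ell^1$-type estimate $\sum_{k}\sum_{P\in T_k}|I_P||\langle g_k,\phi_P\rangle|\lesssim |I_T|\,\ds(|\vec g|)$ and the pointwise kernel bound $\|\vec F(x)\|_{\ell^2}\lesssim \widetilde{\chi}^M_{I_T}(x)$ on which you propose to base it are false, with a loss growing with the number of scales of the tree (so this has nothing to do with the cardinality $N$; it already occurs for $N=1$). Indeed, fix a tree over $I_T$ containing, at each of $J$ scales, tiles whose time intervals tile $I_T$ (the typical tree in this model, with $\xi_T=a_k$), and for unimodular coefficients $(\epsilon_P)$ set $F_\epsilon=\sum_{P\in T}|I_P|\epsilon_P\phi_P$. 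At each point of $I_T$ essentially one tile per scale contributes a term of modulus comparable to $1$ (on a fixed fraction of its time interval), and the terms from different scales have disjoint Whitney frequency supports; Khintchine's inequality in the coefficients then shows that for some choice of signs $\int_{I_T}|F_\epsilon|\gtrsim \sqrt{J}\,|I_T|$, so no bound $|F_\epsilon|\lesssim\widetilde{\chi}^M_{I_T}$ uniform in the number of scales can hold --- and since the coefficients $\epsilon_{k,P}$ in your argument are dictated by $\vec g$, this is exactly the generality in which you would need it. Moreover, taking $g=\overline{F_\epsilon}/|F_\epsilon|\,{\bf 1}_{I_T}$, so that $|g|\le 1$ and hence $\ds(g)\lesssim 1$, one gets $\sum_{P\in T}|I_P||\langle g,\phi_P\rangle|\ge \big|\langle g, F_\epsilon\rangle\big|=\int_{I_T}|F_\epsilon|\gtrsim \sqrt{J}\,|I_T|$, so your $\ell^1$-type estimate fails by a factor $\sqrt{J}$. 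A purely pointwise estimate on a sign-aligned kernel cannot see the time--frequency orthogonality between $g_k$ and the packets, and that orthogonality is precisely the content of the lemma.

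What your reduction discards is exactly this orthogonality. After the initial Cauchy--Schwarz, the statement you actually need is that the tree coefficient norm $\big(\sum_{k}\sum_{P\in T_k}|I_P||\langle g_k,\phi_P\rangle|^2\big)^{1/2}$ is controlled by $|I_T|^{1/2}\ds(|\vec g|)$, i.e.\ an $L^2$-type size of $\vec g$ on the tree dominated by the $L^1$-type dual size. This is a Bessel/John--Nirenberg-type fact: it uses the almost orthogonality of the $L^2$-normalized packets $|I_P|^{1/2}\phi_P$ within the tree (disjoint Whitney frequencies across scales, disjoint time intervals within a scale) together with a localization/stopping-time argument to pass from $L^2$ to $L^1$ averages of $|\vec g|$, and a duality in the index $k$ to keep constants independent of $N$; this is the adaptation of \cite[Lemma 12]{Be} (see also \cite{MTTI,MTT}) that the paper invokes. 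Replacing the square of the coefficient by $\sup\times$ coefficient and then asking for a pointwise bound on the randomized kernel loses this structure irretrievably; note that in examples the loss in the sup-extraction can compensate the failure of the $\ell^1$ bound, which only shows the reduction is lossy, not that it can be repaired. Your proposal flags the kernel bound as ``the delicate technical adaptation'' but never supplies an argument for it, and as stated it is false, so the proof is incomplete at its crux.
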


Next, we recall the stopping time algorithms (see \cite[Pages 143--144]{Be}), which play a crucial role in the proof. 

\begin{lemma} [Decomposition lemma for vectorial size] \label{dvs}
Let $\po$ be a collection of tiles such that  $\overrightarrow{\operatorname{size}}_{\po}(f)\leq \lambda.$ Then one can decompose $\po=\po'\cup \po''$ with $\overrightarrow{\operatorname{size}}_{\po'}(f)\leq \frac{\lambda}{2}$ and $\po''$ can be written as a union of disjoint vectorial trees $\po''=\cup_{\vec{T}} {\vec{T}}$ such that $\sum\limits_{\vec{T}} |I_T|\lesssim \lambda^{-2} \|f\|_2^2.$
\end{lemma}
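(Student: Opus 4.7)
The plan is to perform a standard stopping-time selection on $\po$, iteratively extracting vectorial trees whose contribution witnesses size exceeding $\lambda/2$, and then bounding the sum of their top intervals by a Bessel-type almost orthogonality estimate. This is the scheme used in \cite[pp.~143--144]{Be} (going back to \cite{MTT, MTTI}); I sketch the structure here.

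Initialize $\mathcal{V} := \emptyset$ and $\po_1 := \po$. While $\overrightarrow{\operatorname{size}}_{\po_1}(f) > \lambda/2$, select a vectorial tree $\vec{T} \subset \po_1$ for which
$$
\frac{1}{|I_T|} \sum_{k=1}^{N} \sum_{P \in T_k} |I_P|\, |\langle f, \phi_P\rangle|^2 > \frac{\lambda^2}{4},
$$
with the top $(I_T, \xi_T)$ chosen so that $\xi_T$ is extremal (say, minimal) in the lacunary direction. Then enlarge $\vec{T}$ to the maximal vectorial tree in $\po_1$ with top $(I_T, \xi_T)$, add $\vec{T}$ to $\mathcal{V}$, and remove from $\po_1$ all tiles $P$ with $I_P \subseteq I_T$ and $\xi_T \in 7\omega_P$; this shadow removal prevents any subsequent vectorial tree from reusing the same top and also guarantees disjointness of the selected trees. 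The process terminates because $\po$ is finite. Finally set $\po''$ to be the union of the tiles in all members of $\mathcal{V}$ and $\po' := \po \setminus \po''$; by construction $\overrightarrow{\operatorname{size}}_{\po'}(f) \leq \lambda/2$.

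For the counting, each $\vec{T} \in \mathcal{V}$ satisfies $\sum_k \sum_{P \in T_k} |I_P|\, |\langle f, \phi_P\rangle|^2 \geq \tfrac{\lambda^2}{4} |I_T|$, so
$$
\sum_{\vec{T} \in \mathcal{V}} |I_T| \;\leq\; \frac{4}{\lambda^2} \sum_{\vec{T} \in \mathcal{V}} \sum_{k=1}^{N} \sum_{P \in T_k} |I_P|\, |\langle f, \phi_P\rangle|^2,
$$
and the task reduces to showing that the triple sum is $\lesssim \|f\|_2^2$. This is an almost orthogonality claim for the extracted wave packets $\{\phi_P\}$: the extremal selection and the shadow removal guarantee that packets from distinct selected trees have either disjoint time intervals or well-separated frequency intervals, so that the inner products have rapid decay and a Bessel inequality applies.

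The main obstacle is precisely this last step, since the sum is vectorial and must be controlled without a loss depending on $N$. The key structural observation, already used in Section~\ref{sec:sparse}, is that $\po_k = (\po_0 + \nu_k) \cap \po_k$ for a common reference collection $\po_0$, so at each scale the $k$-sum of $|\langle f, \phi_P\rangle|^2$ can be reorganized as the Bessel sum for a single translate of $\po_0$, with the finite overlapping condition (\ref{overl}) contributing only the implicit constant $B$. Once this almost orthogonality is in place, the bound $\sum_{\vec{T}} |I_T| \lesssim \lambda^{-2} \|f\|_2^2$ follows immediately from the displayed inequality above.
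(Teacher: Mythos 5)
Your argument --- iteratively selecting vectorial trees whose energy exceeds $\lambda^2/4$ with the top frequency chosen extremal, removing their shadows, and then bounding $\sum_{\vec{T}}|I_T|\lesssim \lambda^{-2}\|f\|_2^2$ by a Bessel-type almost-orthogonality estimate in which the finite-overlap condition \eqref{overl} absorbs the $k$-sum --- is precisely the stopping-time scheme the paper itself invokes, since the paper does not reprove this lemma but recalls it from \cite[pp.~143--144]{Be} (going back to \cite{LT1,MTTI,MTT}). So the proposal is correct in outline and takes essentially the same route as the paper.
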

Let us iterate the above lemma. We have $\po=\po'\cup \po''$. 
Write $\po_1:=\po''$, then we have $\overrightarrow{\operatorname{size}}_{\po_1}(f)\leq \overrightarrow{\operatorname{size}}_{\po}(f)$ (here we are using the fact that the size of a subcollection is smaller than or equal to the original collection) and $\po_1=\cup_{\vec{T}} {\vec{T}}$ such that $\sum\limits_{\vec{T}} |I_T|\lesssim \lambda^{-2} \|f\|_2^2.$ Next, we perform the same decomposition to the collection $\po=\po'$. This gives us $\po'=(\po')'\cup (\po')''$ with $\overrightarrow{\operatorname{size}}_{(\po')'}(f)\leq \overrightarrow{\operatorname{size}}_{\po'}(f)\leq \lambda/2$ and $(\po')''=\cup_{\vec{T}} {\vec{T}}$ such that $\sum\limits_{\vec{T}} |I_T|\lesssim (\lambda/2)^{-2} \|f\|_2^2.$ Note that this time $\overrightarrow{\operatorname{size}}_{(\po')''}(f)\leq \overrightarrow{\operatorname{size}}_{\po'}(f)\leq \lambda/2$. We rename $(\po')''=:\po_2$ and continue the iterative procedure with $(\po')'$ and so on. This will give us the collection $\po_n$ with negative powers of $2$ along with $\lambda$, i.e. $\lambda/2^n$.

With the reasoning above, we obtain the following corollary.
\begin{corollary} 
\label{corenergy}
Let $\po$ be a collection of tiles. Then one can decompose $\po=\cup_n \po_n$ such that 
$$
\overrightarrow{\operatorname{size}}_{\po_n}(f)\leq \min{(\lambda 2^{-n}, \overrightarrow{\operatorname{size}}_{\po}(f))}
$$ and 
$$\po_n=\cup_{\vec{T}\in \mathbb{T}_n} {\vec{T}}\quad\text{with}\quad \sum\limits_{\vec{T}\in \mathbb{T}_n} |I_T|\lesssim \frac{2^{2n}}{\lambda^2} \|f\|_2^2.$$
\end{corollary}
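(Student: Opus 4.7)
The plan is to iterate Lemma~\ref{dvs} as outlined in the informal discussion preceding the statement. I would initialize $\po^{(0)}:=\po$, and at each step $n\geq 0$ apply Lemma~\ref{dvs} to $\po^{(n)}$ with threshold $\lambda 2^{-n}$. Provided the hypothesis $\overrightarrow{\operatorname{size}}_{\po^{(n)}}(f)\leq \lambda 2^{-n}$ is met (which it is inductively, starting from $\overrightarrow{\operatorname{size}}_{\po}(f)\leq \lambda$), the lemma produces a decomposition $\po^{(n)}=\po^{(n+1)}\cup \po_n$, where $\overrightarrow{\operatorname{size}}_{\po^{(n+1)}}(f)\leq \lambda 2^{-(n+1)}$ (so that the next step remains legitimate) and $\po_n=\bigcup_{\vec{T}\in \mathbb{T}_n}\vec{T}$ with $\sum_{\vec{T}\in \mathbb{T}_n}|I_T|\lesssim (\lambda 2^{-n})^{-2}\|f\|_2^2 = 2^{2n}/\lambda^2\,\|f\|_2^2$. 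This is exactly the tops bound in the conclusion.

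The required size bound $\overrightarrow{\operatorname{size}}_{\po_n}(f)\leq \min(\lambda 2^{-n},\,\overrightarrow{\operatorname{size}}_{\po}(f))$ would then follow from the monotonicity of the vectorial size under passage to subcollections: since $\po_n\subset \po^{(n)}$ one has $\overrightarrow{\operatorname{size}}_{\po_n}(f)\leq \lambda 2^{-n}$, and since $\po_n\subset \po$ one has $\overrightarrow{\operatorname{size}}_{\po_n}(f)\leq \overrightarrow{\operatorname{size}}_{\po}(f)$. The disjoint decomposition $\po=\bigcup_{n\geq 0}\po_n$ is the telescoping union coming from the iteration, using that the working tile collection is finite and hence $\po^{(n)}=\emptyset$ after finitely many steps.

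The only point requiring minor care is matching the hypothesis of Lemma~\ref{dvs} at the initial step when $\overrightarrow{\operatorname{size}}_{\po}(f)>\lambda$. In that case one starts instead from threshold $\max(\lambda,\overrightarrow{\operatorname{size}}_{\po}(f))$ and shifts the indexing, setting the $\po_n$ corresponding to the too-coarse scales to be empty; the $\min$ on the right-hand side of the size estimate absorbs any resulting constants. I do not expect any substantive analytic obstacle here — the argument is essentially bookkeeping for the repeated application of a single decomposition lemma, which is precisely why the authors present it as a corollary rather than as a standalone result.
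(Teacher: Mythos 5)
Your main argument is precisely the paper's: iterate Lemma~\ref{dvs} with thresholds $\lambda 2^{-n}$, let $\po_n$ be the union of vectorial trees extracted at step $n$ (which gives $\sum_{\vec{T}\in\mathbb{T}_n}|I_T|\lesssim 2^{2n}\lambda^{-2}\|f\|_2^2$), obtain the size bound from monotonicity of $\overrightarrow{\operatorname{size}}$ under passage to subcollections, and use finiteness of $\po$ to exhaust the collection; on this main line there is nothing to correct.

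The one point where you go beyond the paper, the case $\overrightarrow{\operatorname{size}}_{\po}(f)>\lambda$, is not actually repaired by your bookkeeping. If some tile $P\in\po$ satisfies $|\langle f,\phi_P\rangle|>\lambda$, then every subcollection containing $P$ has vectorial size at least $|\langle f,\phi_P\rangle|>\lambda\ge\lambda2^{-n}$ (consider the vectorial tree generated by the single tile $P$), so $P$ cannot be assigned to any $\po_n$ with $n\ge0$ compatibly with $\overrightarrow{\operatorname{size}}_{\po_n}(f)\le\lambda2^{-n}$; starting the iteration at threshold $\max\big(\lambda,\overrightarrow{\operatorname{size}}_{\po}(f)\big)$ and declaring some $\po_n$ empty changes nothing, and the $\min$ on the right-hand side cannot absorb any loss, since $\min\big(\lambda2^{-n},\overrightarrow{\operatorname{size}}_{\po}(f)\big)\le\lambda2^{-n}$ makes the requirement stronger, not weaker. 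The corollary should simply be read with the tacit standing hypothesis of Lemma~\ref{dvs}, namely $\overrightarrow{\operatorname{size}}_{\po}(f)\le\lambda$ (or, alternatively, with $n$ allowed to range over $\Z$), and this is exactly how it is applied in the proof of Proposition~\ref{prop:claim1}, where $\lambda=\langle f\rangle_{2,3Q}$ dominates $\overrightarrow{\operatorname{size}}_{\po}(f)$ up to a constant by Lemma~\ref{lem:vsaver}.
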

A similar decomposition lemma may be proved for the other size. 

\begin{lemma} [Decomposition lemma for dual size] \label{dds}
Let $\po$ be a collection of tiles such that  $\widetilde{\operatorname{size}}_{\po}(g)\leq \lambda.$ Then one can decompose $\po=\po'\cup \po''$ with $\widetilde{\operatorname{size}}_{\po'}(g)\leq \frac{\lambda}{2}$ and $\po''$ can be written as a union of disjoint vectorial trees $\po''=\cup_{\vec{T}} {\vec{T}}$ such that $\sum\limits_{\vec{T}} |I_T|\lesssim \lambda^{-1} \|g\|_1$.
\end{lemma}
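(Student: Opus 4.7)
The plan is to mimic the standard Calderón--Zygmund stopping time argument, selecting a pairwise disjoint family of maximal dyadic intervals on which a smoothed average of $|g|$ is large. Let $\mathcal{B}$ be the collection of dyadic intervals $I'$ that are maximal with respect to
$$
\frac{1}{|3I'|}\int_\R |g(x)|\widetilde{\chi}_{3I'}(x)\,dx > \frac{\lambda}{2}.
$$
Since the left-hand side tends to $0$ as $|I'| \to \infty$, such maximal intervals exist; by maximality and the nested structure of dyadic intervals they are pairwise disjoint. I would then set
$$
\po'' := \{P \in \po : I_P \subset I' \text{ for some } I' \in \mathcal{B}\}, \qquad \po' := \po \setminus \po''.
$$

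To verify that $\widetilde{\operatorname{size}}_{\po'}(g) \le \lambda/2$, take any $I'' \in \mathcal{J}^+_{\po'}$, so $I'' \supseteq I_P$ for some $P \in \po'$. If the $\widetilde{\chi}_{3I''}$-average of $|g|$ on $3I''$ exceeded $\lambda/2$, then $I''$ would be contained in some $J \in \mathcal{B}$, forcing $I_P \subset J$ and hence $P \in \po''$, a contradiction. Taking the supremum over such $I''$ gives the claim.

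For the tree decomposition of $\po''$, each $P \in \po''$ is attached to the unique $I' \in \mathcal{B}$ containing $I_P$ (unique by disjointness of $\mathcal{B}$). Using the representation $\po_k = (\po_0 + \nu_k) \cap \po_k$, the tiles in $\po''$ lying below a fixed $I'$ assemble into a bounded number of vectorial trees, all with spatial top $I_T = I'$: one chooses the common spatial top $I'$ together with a frequency reference in $\po_0$, and transports the resulting tree by each translation $\nu_k$. Finally, for the counting step I would introduce the maximal-type operator
$$
\mathcal{M}g(x) := \sup_{x\in I,\ I\text{ dyadic}} \frac{1}{|3I|}\int_\R |g(y)|\widetilde{\chi}_{3I}(y)\,dy,
$$
which, thanks to the rapid decay of $\widetilde{\chi}_{3I}$, is pointwise dominated by the Hardy--Littlewood maximal function and hence of weak type $(1,1)$. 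Every $I' \in \mathcal{B}$ lies inside $\{\mathcal{M}g > \lambda/2\}$, so by disjointness,
$$
\sum_{I' \in \mathcal{B}} |I'| \le \bigl|\{\mathcal{M}g > \lambda/2\}\bigr| \lesssim \lambda^{-1}\|g\|_1,
$$
and multiplying by the $O(1)$ trees per $I'$ yields $\sum_{\vec T}|I_T| \lesssim \lambda^{-1}\|g\|_1$.

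The main obstacle is the tree organization in the third step: one must verify that the tiles beneath a given $I' \in \mathcal{B}$ really split into only a bounded number of vectorial trees compatible with the requirement $T_k = \nu_k + T_0$, rather than a number growing with $N$. This is essentially a structural consequence of the setup borrowed from \cite{Be}, but it is the step where care is needed so as not to accumulate an unfavorable multiplicative factor in the counting bound.
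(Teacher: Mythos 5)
Your argument is correct, and it is essentially the standard stopping-time proof: the paper itself offers no proof of Lemma \ref{dds}, simply recalling the algorithm from \cite[pp.~143--144]{Be}, and your selection of maximal dyadic intervals, the size bound for $\po'$, and the counting step all match that argument. Two remarks. First, the ``main obstacle'' you flag is in fact harmless under the standing reduction made in Section \ref{sec:sparse}: after the lacunarization with respect to the $a_k$'s, each $\po_k$ sits inside the translate $\po_0+\nu_k$, so for every dyadic spatial interval there is at most one tile of $\po_k$ over it. Consequently, for a fixed selected interval $I'\in\mathcal{B}$, \emph{all} tiles of $\po_k$ with $I_P\subset I'$ form a single tree with top $I'\times\big(\nu_k+[\tfrac{1}{2|I'|},\tfrac{1}{|I'|}]\big)$ and frequency point $\xi_T=\nu_k$ (the containments $\omega_T\subset 3\omega_P$ and $\nu_k\in 7\omega_P$ are immediate from the form of the tiles in $\po_0$), and taking $T_0$ to be the corresponding tree in $\po_0$ exhibits the union over $k$ as one vectorial tree per $I'$ --- so the multiplicity is $O(1)$, independent of $N$, and the trees attached to distinct $I'$ are disjoint because the $I'$ are. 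Second, your use of the weak $(1,1)$ bound for the smoothed maximal operator $\mathcal{M}\lesssim M$ is not just a convenience: one cannot simply sum the smoothed averages over the selected intervals, since $\sum_{I'\in\mathcal{B}}\widetilde{\chi}_{3I'}$ need not be bounded for disjoint $I'$ of very different scales, so passing through $|\{\mathcal{M}g>\lambda/2\}|$ is exactly the right way to get $\sum_{\vec T}|I_T|\lesssim\lambda^{-1}\|g\|_1$.
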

\begin{corollary} 
\label{cormass}
Let $\po$ be a collection of tiles. Then one can decompose $\po=\cup_n \po_n$ such that 
$$
\widetilde{\operatorname{size}}_{\po_n}(g)\leq \min{(\lambda2^{-n}, \widetilde{\operatorname{size}}_{\po}(g)})
$$ 
and 
$$\po_n=\cup_{\vec{T}\in\mathbb T_n} {\vec{T}}\quad \text{with}\quad  \sum\limits_{\vec{T}\in \mathbb T_n} |I_T|\lesssim \frac{2^n}{\lambda} \|g\|_1.$$
\end{corollary}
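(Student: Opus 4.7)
The proof is a direct iteration of Lemma \ref{dds}, carried out in precisely the same fashion as the derivation of Corollary \ref{corenergy} from Lemma \ref{dvs} spelled out in the paragraph immediately following the latter. The only change is arithmetic: the tree-top measure bound produced by Lemma \ref{dds} scales like $\lambda^{-1}\|g\|_1$ rather than $\lambda^{-2}\|f\|_2^2$, which accounts for the factor $2^n/\lambda$ in the conclusion in place of $2^{2n}/\lambda^2$.

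Concretely, I would start from $\po$ and apply Lemma \ref{dds} at threshold $\lambda$, obtaining $\po=\po^{(1)}\cup \po_1$ with $\widetilde{\operatorname{size}}_{\po^{(1)}}(g)\le \lambda/2$ and $\po_1=\cup_{\vec T\in\mathbb T_1}\vec T$ such that $\sum_{\vec T\in\mathbb T_1}|I_T|\lesssim \lambda^{-1}\|g\|_1$. At step $n+1$ I would apply the lemma to $\po^{(n)}$ at threshold $\lambda 2^{-n}$, which is legitimate because the previous step guarantees $\widetilde{\operatorname{size}}_{\po^{(n)}}(g)\le \lambda 2^{-n}$, and this yields $\po^{(n)}=\po^{(n+1)}\cup \po_{n+1}$ with $\widetilde{\operatorname{size}}_{\po^{(n+1)}}(g)\le \lambda 2^{-(n+1)}$ and $\po_{n+1}=\cup_{\vec T\in\mathbb T_{n+1}}\vec T$ satisfying $\sum_{\vec T\in\mathbb T_{n+1}}|I_T|\lesssim (\lambda 2^{-n})^{-1}\|g\|_1=(2^n/\lambda)\|g\|_1$. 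Setting $\po=\cup_n \po_n$ then delivers the decomposition with the advertised tree-top measure bound, up to an absolute constant harmlessly absorbed into $\lesssim$.

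For the two-term minimum in the size bound, I would argue as follows. Monotonicity of the dual size under inclusion is a direct consequence of Definition \ref{def:dual}: if $\qo\subset \po$ then $\mathcal{J}^+_{\qo}\subset \mathcal{J}^+_{\po}$, so the supremum defining $\widetilde{\operatorname{size}}_{\qo}(g)$ ranges over a smaller class of intervals, and therefore $\widetilde{\operatorname{size}}_{\qo}(g)\le \widetilde{\operatorname{size}}_{\po}(g)$. Applied to $\qo=\po_n\subset \po$ this gives the second term of the minimum, while the first term follows from $\po_n\subset \po^{(n-1)}$ together with the fact that the previous iteration already forced $\widetilde{\operatorname{size}}_{\po^{(n-1)}}(g)\le \lambda 2^{-(n-1)}$, with the remaining factor of $2$ absorbed by a mild reindexing. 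There is no substantial obstacle here; the only care needed is to keep the bookkeeping of the powers of $2$ consistent between the size bounds and the tree-top measure bounds.
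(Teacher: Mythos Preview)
Your proposal is correct and follows exactly the approach the paper intends: the paper explicitly derives Corollary~\ref{corenergy} by iterating Lemma~\ref{dvs} in the paragraph preceding it, and then presents Lemma~\ref{dds} and Corollary~\ref{cormass} without further argument, relying on the reader to repeat the same iteration with the obvious arithmetic change you identify. Your justification of the two-term minimum via monotonicity of the dual size under inclusion is also precisely what the paper uses (it says ``here we are using the fact that the size of a subcollection is smaller than or equal to the original collection'' in the analogous vectorial case).
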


\section{Proof of Theorem \ref{thm:main2}}
\label{sec:proofm2}

We proceed to prove Theorem \ref{thm:main2} and as explained earlier, this will lead to the conclusion of Theorem \ref{thm:main}.

\subsection{Sparse collection and reduction to a single grid}
\label{sub:sparse}

First we will construct the sparse collection $\mathcal{S}$. Let $f\in L^p(\R)$ and write 
$$
M_pf(x):=\sup_{I\subset \R} \langle f\rangle_{p,I}{\bf 1}_{I}(x)
$$
for the $p$-Hardy--Littlewood maximal functions. When $p=1$ we will sometimes omit the subindex and denote it by $M$.

For a fixed $Q\in  \mathcal{D}$, the $p$-stopping intervals of $f$ on $Q$, which we will denote by $\mathcal{I}_{f,p,Q}$, are defined as the collection of maximal dyadic $I\subset Q$ such that
\begin{equation}
\label{eq:maxic}
 I\subset \{x\in \R: M_p(f{\bf 1}_{3Q})(x)\ge C \langle f\rangle_{p,3Q}\}.
\end{equation}
Observe that $
\mathcal{I}_{f,p,Q}$ is a pairwise disjoint collection of dyadic intervals. Furthermore, we have sparsity due to the maximality, namely 
$$
\sum_{I\in 
\mathcal{I}_{f,p,Q}}|I|\le  |\{x\in \R: M_p(f{\bf 1}_{3Q})\ge C \langle f\rangle_{p,3Q}\}|\le \frac{|Q|}{6}
$$ 
for $C$ large enough.

Given two compactly supported functions $f_j\in L^{p_j}(\R)$, $j=1,2$, we define, for all $Q\in \mathcal{D}$,
$$
\mathcal{I}_{(f_1,f_2),(p_1,p_2),Q}:=\textrm{ maximal elements of } \bigcup_{j=1}^2\mathcal{I}_{f_j,p_j,Q}.
$$
Then the intervals $\mathcal{I}_{(f_1,f_2),(p_1,p_2),Q}$ are pairwise disjoint and 
\begin{equation}
\label{eq:packi}
\sum_{I\in \mathcal{I}_{(f_1,f_2),(p_1,p_2),Q}}|I|\le \frac{|Q|}{2}.
\end{equation}
From the definition of $\mathcal{I}_{(f_1,f_2),(p_1,p_2),Q}$, there holds
\begin{equation}
\label{eq:maxi}
\inf_{x\in 3I} M_{p_j}(f_j{\bf 1}_{3Q})(x)\lesssim \langle f_j\rangle_{p,3Q}, \quad \text{ for all } I\in \mathcal{I}_{(f_1,f_2),(p_1,p_2),Q},\, \, j=1,2.
\end{equation}

The procedure to construct the sparse collection follows an inductive argument as in \cite[Section 5]{CDpO}. We begin by choosing a partition of $\R$ by intervals 
$$
\{Q_k\in \mathcal{D}: k\in \N\}
$$ 
with the property that $\supp f_j\subset 3Q_k$ for all $j=1,2$ and $k\in \N$. For each $k$, let $
\mathcal{S}(Q_k)=\bigcup_{\ell=0}^{\infty}\mathcal{S}_{\ell}(Q_k)$ 
where $\mathcal{S}_0(Q_k)=\{Q_k\}$ and, proceeding iteratively, take 
$$
\mathcal{S}_{\ell}(Q_k)=\bigcup_{Q\in \mathcal{S}_{\ell-1}(Q_k)} \mathcal{I}_{(f_1,f_2),(p_1,p_2),Q}, \quad \ell =1,2,\ldots.
$$
Finally, define $
\mathcal{S}=\mathcal{S}(\mathcal{D}, f_1,f_2)=\bigcup_{k=0}^{\infty}\mathcal{S}(Q_k)$.
By construction and by the packing property \eqref{eq:packi}, $\mathcal{S}$ is a $\frac12$-sparse subcollection of $\mathcal{D}$.

Let us consider the three canonical shifted grids on $\R$, namely
$$
\mathcal{D}_j=\{2^k[0,1)+\big(n+\frac{j}{3}\big)2^k:k,n\in \Z\},\quad  j=0,1,2.
$$ 
It is well-known that for all intervals $I\subset\R$ there exists a unique $\widetilde{I}\in \mathcal{D}_0\cup\mathcal{D}_1\cup\mathcal{D}_2$ with $3I\subset \widetilde{I}$, $|\widetilde{I}|\le 6\cdot |3I|$. We say that $I$ has type $j\in \{0,1,2\}$ if $\widetilde{I}\in \mathcal{D}_j$. 
Fix a finite collection of tiles $\po$ and a pair of functions $(f_1,f_2)$. Let us split $\po=\po^0\cup\po^1\cup \po^2$ where $\po^j=\{P\in \po: I_P \text{ has type } j\}$. 

With all these ingredients at hand, let us explain the strategy in our context, in which we will consider $f$ and $\vec g$. For each $j\in \{0,1,2\}$ we will use the previous construction with $\mathcal{D}=\mathcal{D}_j$ to obtain a $\frac12$-sparse collection of intervals $\mathcal{S}_j=\mathcal{S}(\mathcal{D}_j,f,|\vec g|)$ such that 
\begin{equation}
\label{eq:sparseJ}
\Lambda_{\po^j}(f,\vec g)\lesssim K \sum_{I\in \mathcal{S}_j} |I|\langle f\rangle_{2,I}\langle |\vec g|\rangle_{I}
\end{equation}
for suitable large $K$. Once we obtain \eqref{eq:sparseJ}, we will conclude the estimate
$$
\Lambda_{\po}(f,\vec g)\lesssim \sum_{j=0}^2\Lambda_{\po^j}(f,\vec g)\le K \sum_{j=0}^2\sum_{I\in \mathcal{S}_j} |I|\langle f\rangle_{2,I}\langle |\vec g|\rangle_{I}\lesssim K\sum_{I\in \widetilde{S}} |I|\langle f\rangle_{2,I}\langle |\vec g|\rangle_{I},
$$
where $\widetilde{S}=\{3I:I\in \mathcal{S}_{j_0}\}$ and $j_0\in \{0,1,2\}$ is such that the right hand side of \eqref{eq:sparseJ} is maximal. Since $\mathcal{S}_{j_0}$ is $\frac12$-sparse it immediately follows that $\mathcal{S}$ is a $\frac16$-sparse collection. This would complete the proof of Theorem \ref{thm:main2}. So it suffices to prove \eqref{eq:sparseJ}. Moreover, we can work just with $j=0$, so will omit the subscript $j$ from now on.

We will prove \eqref{eq:sparseJ} after several steps. 

\subsection{The good tiles}

For a collection of tiles $\po$, consider the following subcollections
$$
\po_{\le}(I):=\{P\in \po; I_P\subset I\}, \qquad \po_{=}(I):=\{P\in \po; I_P=I\}.
$$
Fix a finite collection of tiles $\po$ whose intervals $\{I_P: P\in \po\}$ are dyadic. For $\po_{\le}(I)$, define the set of \textit{good tiles} as 
$$
\mathrm{G}_{(f,|\vec g|),(p_1,p_2),Q}:=\po\setminus \Big(\bigcup_{I\in \mathcal{I}_{(f,|\vec g|),(p_1,p_2),Q}}\po_{\le}(I)\Big).
$$
We may define the good tiles for any collection of tiles $\po$. When we use it, it will be with respect to $\po_Q$ for intervals $Q$, as we are going to work with functions that are supported in $3Q$ for some $Q$. 

We will prove the following proposition.
\begin{proposition}
\label{prop:claim1}
Let $Q\subset \R$ be a dyadic interval and $f,\vec g\in C_0^{\infty}(\R)$ with $\supp f, \supp |\vec g| \subset 3Q$. Then
\begin{equation*}
\label{eq:1}
\sum_{k=1}^N\sum_{P\in \mathrm{G}_{(f,|\vec g|), (2,1),Q}\cap \po_k}|I_P||\langle f,\phi_P\rangle| |\langle g_k,\phi_P\rangle| \lesssim |Q| \langle f\rangle_{2,3Q}\,\langle |\vec g|\rangle_{3Q}.
\end{equation*}
\end{proposition}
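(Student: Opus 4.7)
Plan: Proposition \ref{prop:claim1} says that the restriction of the bilinear form to the ``good'' tiles is controlled by the product of natural $L^2$- and $L^1$-averages on $3Q$. The reason is that on the good tiles both the vectorial and the dual size are automatically small (by the very definition of the stopping intervals), so the decomposition-into-trees machinery can be run against both sizes simultaneously and then coupled with the support hypothesis $\supp f,\supp|\vec g|\subset 3Q$ to gain the factor $|Q|$.

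\textbf{Step 1: Size control on the good tiles.} Abbreviate $\mathrm{G}:=\mathrm{G}_{(f,|\vec g|),(2,1),Q}$. I first establish
\[
\overrightarrow{\operatorname{size}}_{\mathrm{G}}(f) \lesssim \langle f\rangle_{2,3Q}, \qquad \widetilde{\operatorname{size}}_{\mathrm{G}}(|\vec g|) \lesssim \langle|\vec g|\rangle_{3Q}.
\]
For any $P\in\mathrm{G}$, maximality of the stopping intervals in $\mathcal I_{f,2,Q}$ and in $\mathcal I_{|\vec g|,1,Q}$ guarantees (possibly different) points $x_0,x_1\in I_P$ such that $M_2(f\mathbf{1}_{3Q})(x_0)\lesssim \langle f\rangle_{2,3Q}$ and $M(|\vec g|\mathbf{1}_{3Q})(x_1)\lesssim \langle|\vec g|\rangle_{3Q}$. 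A standard dyadic annular decomposition of the smooth weight $\widetilde\chi_{I_P}^M$ (with $M>1$) then yields $\tfrac{1}{|I_P|}\int|f|^2\widetilde\chi_{I_P}^M\lesssim M_2(f\mathbf{1}_{3Q})(x_0)^2\lesssim \langle f\rangle_{2,3Q}^2$, and Proposition \ref{size} closes the vectorial-size bound. For the dual size I would split any $I'\in\mathcal J^+_{\mathrm{G}}$ (so $I'\supset I_P$ for some $P\in\mathrm{G}$) into two regimes: if $|I'|\le |Q|$, the same maximal-function argument at $x_1\in I_P\subset I'$ works; if $|I'|>|Q|$, the support of $|\vec g|$ in $3Q$ gives the cruder estimate $\tfrac{1}{|3I'|}\int|\vec g|\widetilde\chi_{3I'}\le \tfrac{|3Q|}{|3I'|}\langle|\vec g|\rangle_{3Q}\le \langle|\vec g|\rangle_{3Q}$.

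\textbf{Step 2: Double decomposition and summation.} Set $\lambda_1:=\langle f\rangle_{2,3Q}$ and $\lambda_2:=\langle|\vec g|\rangle_{3Q}$. Apply Corollary \ref{corenergy} to split $\mathrm{G}=\bigcup_{n\ge 0}\mathrm{G}_n$ with $\overrightarrow{\operatorname{size}}_{\mathrm{G}_n}(f)\lesssim \lambda_1 2^{-n}$ and a tree decomposition $\mathbb T_n$ satisfying $\sum_{\mathbb T_n}|I_T|\lesssim 2^{2n}\|f\|_2^2/\lambda_1^2$. Within each $\mathrm{G}_n$, apply Corollary \ref{cormass} to refine to $\mathrm{G}_n=\bigcup_{m\ge 0}\mathrm{G}_{n,m}$ with $\widetilde{\operatorname{size}}_{\mathrm{G}_{n,m}}(|\vec g|)\lesssim \lambda_2 2^{-m}$ and a tree decomposition $\mathbb T_{n,m}$ with $\sum_{\mathbb T_{n,m}}|I_T|\lesssim 2^m\||\vec g|\|_1/\lambda_2$; by reorganizing the trees of $\mathbb T_{n,m}$ inside those of $\mathbb T_n$ one also retains $\sum_{\mathbb T_{n,m}}|I_T|\lesssim 2^{2n}\|f\|_2^2/\lambda_1^2$, so the better of the two bounds applies. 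By Lemma \ref{lem:local} applied tree by tree, together with the support estimates $\|f\|_2^2\lesssim \lambda_1^2|Q|$ and $\||\vec g|\|_1\lesssim \lambda_2|Q|$, I obtain
\[
\Lambda_{\mathrm{G}_{n,m}}(f,\vec g)\lesssim \lambda_1\lambda_2|Q|\cdot 2^{-n-m}\min(2^{2n},2^m).
\]
The numerical double series $\sum_{n,m\ge 0}2^{-n-m}\min(2^{2n},2^m)$ converges (split into the regimes $m\ge 2n$ and $m<2n$), so summation in $(n,m)$ gives the claimed bound $|Q|\langle f\rangle_{2,3Q}\langle|\vec g|\rangle_{3Q}$.

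\textbf{Main obstacle.} The most delicate point is the simultaneous bookkeeping of the two tree-top sums in Step 2: one must check that after refining $\mathrm{G}_n$ via the dual-size lemma the resulting pieces $\mathrm{G}_{n,m}$ still admit a tree structure for which both bounds on $\sum|I_T|$ remain valid. This relies on the monotonicity of the sizes under restriction and on the fact that subcollections of a vectorial tree can be repackaged as vectorial sub-trees with comparable top intervals, which is a standard mechanism in the time-frequency sparse-domination literature \cite{Be,CDpO}. Without this double control the single-size decomposition only gives the divergent bound $\sum_n 2^n\lambda_1\lambda_2|Q|$, so the interplay between the two sizes is what makes the geometric series summable.
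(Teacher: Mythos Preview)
Your proof is correct and follows the same strategy as the paper: first use the good-tile hypothesis together with Proposition~\ref{size} and the definition of the dual size to bound both sizes by the natural averages $\langle f\rangle_{2,3Q}$ and $\langle|\vec g|\rangle_{3Q}$, then run the two decomposition lemmas (Corollaries~\ref{corenergy} and~\ref{cormass}) and sum tree by tree via Lemma~\ref{lem:local}. The only cosmetic difference is that the paper organizes the double sum by writing $\sum_{\vec T}|I_T|=\big(\sum_{\vec T}|I_T|\big)^{\theta_1}\big(\sum_{\vec T}|I_T|\big)^{\theta_2}$ with $\theta_1+\theta_2=1$, $\theta_1<\tfrac12$, and bounds each factor by a different decomposition, whereas you take the sharper $\min$ of the two top-sum bounds; these two devices are equivalent, and the ``main obstacle'' you flag (that the common refined tree collection inherits both top-sum estimates) is treated just as informally in the paper as in your sketch.
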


We need some preparation to prove Proposition \ref{prop:claim1}.
Fix $Q\in \mathcal{D}$. Let us define now, for a collection of tiles $\po$ whose intervals $\{I_P:P\in \po\}$ are dyadic, 
$$
\mathrm{G}_{f, 2,Q}=\po\setminus \Big(\bigcup_{I\in \mathcal{I}_{f,2,Q}}\po_{\le}(I)\Big).
$$ 
Similarly, define the collection 
$$
\mathrm{G}_{|\vec g|, 1,Q}=\po\setminus \Big(\bigcup_{I\in \mathcal{I}_{|\vec g|,1,Q}}\po_{\le}(I)\Big).
$$ 

\begin{lemma}
\label{lem:vsaver}
Let $\mathrm{G}_{f, 2,Q}$ as above and let $f$ be such that $\supp(f)\subseteq 3Q$. Then
$$
\overrightarrow{\operatorname{size}}_{\mathrm{G}_{f, 2,Q}}(f)\lesssim \langle f\rangle_{2,3Q}.
$$
\end{lemma}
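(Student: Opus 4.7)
The plan is to reduce, via Proposition \ref{size}, the vectorial size to a pointwise supremum and then use the stopping-time property defining the good tiles to bound each term by $\langle f\rangle_{2,3Q}$. Concretely, Proposition \ref{size} yields
$$
\overrightarrow{\operatorname{size}}_{\mathrm{G}_{f,2,Q}}(f)^{2}\lesssim \sup_{P\in \mathrm{G}_{f,2,Q}} \frac{1}{|I_P|}\int_{\R}|f(x)|^{2}\widetilde{\chi}^{M}_{I_P}(x)\,dx,
$$
so it suffices to show that the right-hand quantity is $\lesssim \langle f\rangle_{2,3Q}^{2}$ uniformly in $P\in \mathrm{G}_{f,2,Q}$ (treating the implicit working collection $\po_Q$, i.e.\ $I_P\subseteq Q$, as indicated in the paragraph preceding the lemma).

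The key step is to exhibit a ``safe point'' in $I_P$. Since $P$ is good, $I_P$ is not contained in any $I\in\mathcal{I}_{f,2,Q}$. By the maximality defining $\mathcal{I}_{f,2,Q}$ in \eqref{eq:maxic}, if $I_P$ were entirely contained in the level set $\{M_{2}(f{\bf 1}_{3Q})\ge C\langle f\rangle_{2,3Q}\}$, it would be contained in one of the stopping intervals, a contradiction. Hence there exists $x_{0}\in I_{P}$ with $M_{2}(f{\bf 1}_{3Q})(x_{0})\le C\langle f\rangle_{2,3Q}$. In particular, for every $j\ge 0$,
$$
\Big(\frac{1}{|2^{j}I_{P}|}\int_{2^{j}I_{P}}|f|^{2}\Big)^{1/2}=\Big(\frac{1}{|2^{j}I_{P}|}\int_{2^{j}I_{P}}|f{\bf 1}_{3Q}|^{2}\Big)^{1/2}\le M_{2}(f{\bf 1}_{3Q})(x_{0})\lesssim \langle f\rangle_{2,3Q},
$$
where the first identity uses $\supp f\subset 3Q$.

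Then I would decompose the weighted average into dyadic annuli. Writing $A_{0}(I_{P}):=I_{P}$ and $A_{j}(I_{P}):=2^{j}I_{P}\setminus 2^{j-1}I_{P}$ for $j\ge 1$, and using the decay $\widetilde{\chi}_{I_{P}}^{M}(x)\lesssim 2^{-jM}$ for $x\in A_{j}(I_{P})$,
$$
\frac{1}{|I_P|}\int_{\R}|f|^{2}\widetilde{\chi}^{M}_{I_P}\lesssim \sum_{j\ge 0}2^{-jM}\frac{1}{|I_P|}\int_{2^{j}I_{P}}|f|^{2}=\sum_{j\ge 0}2^{-jM}\cdot 2^{j}\cdot\frac{1}{|2^{j}I_{P}|}\int_{2^{j}I_{P}}|f|^{2}\lesssim \Big(\sum_{j\ge 0}2^{-j(M-1)}\Big)\langle f\rangle_{2,3Q}^{2},
$$
which is $\lesssim \langle f\rangle_{2,3Q}^{2}$ for $M>1$. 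Taking square roots completes the proof.

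The only delicate point is the existence of the safe point $x_{0}\in I_{P}$, which rests on the maximality in the construction of $\mathcal{I}_{f,2,Q}$ together with the convention that $I_{P}\subseteq Q$ (tiles with $I_{P}\not\subset Q$ either have $|I_{P}|\gtrsim |Q|$, in which case $\frac{1}{|I_P|}\int|f|^{2}\widetilde{\chi}^{M}_{I_P}\le \frac{|3Q|}{|I_P|}\langle f\rangle_{2,3Q}^{2}\lesssim \langle f\rangle_{2,3Q}^{2}$ directly, or they are far from $3Q$, in which case the decay of $\widetilde{\chi}^{M}_{I_{P}}$ on $\supp f$ handles them). Everything else is a routine dyadic tail summation.
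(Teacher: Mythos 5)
Your proof is correct and follows essentially the same route as the paper: reduce via Proposition~\ref{size}, use the maximality in the construction of $\mathcal{I}_{f,2,Q}$ to control $M_2(f{\bf 1}_{3Q})$ at a point of $I_P$ by $C\langle f\rangle_{2,3Q}$, and conclude with the same dyadic-annuli tail summation against the decay of $\widetilde{\chi}_{I_P}$. Your ``safe point'' argument is in fact a slightly more careful rendering of the paper's claim that good tiles avoid the level set (only $I_P\not\subset\{M_2(f{\bf 1}_{3Q})\ge C\langle f\rangle_{2,3Q}\}$ is needed, which is exactly what you prove), and your remark about tiles with $I_P\not\subset Q$ is consistent with the paper's convention of applying the lemma to $\po_{\le}(Q)$.
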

\begin{proof}
Recall Proposition~\ref{size} and observe that it is enough to prove that for $P\in \mathrm{G}_{f, 2,Q}$ we have 
\begin{equation}
\label{eq:max2}
\Big(\frac{1}{|I_P|}\int_{\R}|f|^2\widetilde{\chi}_{I_P}(x)\,dx\Big)^{1/2}\le \inf_{x\in I_P}M_2f(x). 
\end{equation}
Once \eqref{eq:max2} is proven, by the definition of good tiles for $P\in \mathrm{G}_{f, 2,Q}$ we have that 
$$
I_P\cap \{x\in \R: M_2(f1_{3Q})\ge C \langle f\rangle_{2,3Q}\}=\emptyset
$$ 
and as a consequence, $M_2(f1_{3Q}) \leq C  \langle f\rangle_{2,3Q}$ on $I_P$. 

Let us prove \eqref{eq:max2}:
\begin{align*}
&\Big(\frac{1}{|I_P|}\int_{\R}|f|^2\widetilde{\chi}_{I_P}(x)\,dx\Big)^{1/2}\\
&\qquad=\Big(\frac{1}{|I_P|}\int_{I_P}|f|^2\,dx+\frac{1}{|I_P|}\sum_{k\ge 0}\int_{2^{k+1}I_P\setminus 2^{k}I_P}|f|^2\Big(1+\frac{\dist(x,I_P)}{|I_P|}\Big)^{-100}\,dx\Big)^{1/2}\\
&\qquad \lesssim\Big(\frac{1}{|I_P|}\int_{I_P}|f|^2\,dx+\frac{1}{|I_P|}\sum_{k\ge 0}\int_{2^{k+1}I_P\setminus 2^{k}I_P}|f|^2\Big(1+\frac{2^k|I_P|}{|I_P|}\Big)^{-100}\,dx\Big)^{1/2}\\
&\qquad \lesssim \Big(\frac{1}{|I_P|}\int_{I_P}|f|^2\,dx+\sum_{k\ge 0}2^{-100k}\frac{2^{k+1}}{{|I_P|2^{k+1}}}\int_{2^{k+1}I_P}|f|^2\,dx\Big)^{1/2},
\end{align*}
and the result follows easily.
\end{proof}
\begin{lemma}
\label{lem:dsaver}
Let $\mathrm{G}_{|\vec g|,1,Q}$ be as above, then
$$
\widetilde{\operatorname{size}}_{\mathrm{G}_{|\vec g|,1,Q}}(|\vec g|)\lesssim \langle |\vec g|\rangle_{3Q}.
$$
\end{lemma}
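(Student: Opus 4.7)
The plan is to mimic the proof of Lemma \ref{lem:vsaver} with the maximal function $M=M_1$ in place of $M_2$. By definition of the dual size, the goal is to show that for every $I'\in \mathcal{J}^+_{\mathrm{G}_{|\vec g|,1,Q}}$ one has
$$
\frac{1}{|3I'|}\int_{\R}|\vec g(x)|\widetilde{\chi}_{3I'}(x)\,dx \lesssim \langle|\vec g|\rangle_{3Q}.
$$
Fix such $I'$, which by definition contains some $I_P$ with $P\in \mathrm{G}_{|\vec g|,1,Q}$. The first step is exactly the analogue of the opening observation in Lemma \ref{lem:vsaver}: since $I_P$ is not contained in any stopping interval $I\in \mathcal{I}_{|\vec g|,1,Q}$, there exists a point $x_0\in I_P\subset I'$ such that
$$
M(|\vec g|\mathbf 1_{3Q})(x_0)\lesssim \langle|\vec g|\rangle_{3Q}.
$$

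The second step is a routine dyadic decomposition of the tail $\widetilde{\chi}_{3I'}$, in complete parallel with the chain of inequalities at the end of Lemma \ref{lem:vsaver}. Splitting $\R$ as $3I'\cup\bigcup_{k\ge 0}\bigl(2^{k+1}(3I')\setminus 2^k(3I')\bigr)$ and using the pointwise bound $\widetilde{\chi}_{3I'}\lesssim 2^{-100k}$ on the $k$-th annulus, I would obtain
$$
\frac{1}{|3I'|}\int_{\R}|\vec g|\widetilde{\chi}_{3I'}\,dx \lesssim \sum_{k\ge 0}2^{-100k}\,2^{k+1}\langle|\vec g|\rangle_{2^{k+1}(3I')}.
$$

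The third step is to control each average $\langle|\vec g|\rangle_{2^{k+1}(3I')}$ by the maximal function at $x_0$. Since $|\vec g|$ is supported in $3Q$ and $x_0\in 2^{k+1}(3I')$, the definition of the Hardy--Littlewood maximal function gives
$$
\langle|\vec g|\rangle_{2^{k+1}(3I')}=\frac{1}{|2^{k+1}(3I')|}\int_{2^{k+1}(3I')}|\vec g|\mathbf{1}_{3Q}\,dx\le M(|\vec g|\mathbf 1_{3Q})(x_0)\lesssim \langle|\vec g|\rangle_{3Q}.
$$
Summing the convergent geometric series $\sum_{k\ge 0}2^{-99k}$ then yields the claim. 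Finally, taking the supremum over $I'\in \mathcal{J}^+_{\mathrm{G}_{|\vec g|,1,Q}}$ gives $\widetilde{\operatorname{size}}_{\mathrm{G}_{|\vec g|,1,Q}}(|\vec g|)\lesssim \langle|\vec g|\rangle_{3Q}$.

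The only delicate point is the first step, i.e.\ producing the good point $x_0\in I_P$ from the good-tile condition; this is the same subtlety (and the same resolution) as in Lemma \ref{lem:vsaver}, since by the maximality of stopping intervals the set $\bigcup_{I\in \mathcal I_{|\vec g|,1,Q}}I$ exhausts the portion of the bad set $\{M(|\vec g|\mathbf 1_{3Q})\ge C\langle|\vec g|\rangle_{3Q}\}$ relevant to the tiles in $\po_Q$. After this point everything else is a direct transcription of the $L^2$-averaged argument to the $L^1$-averaged setting.
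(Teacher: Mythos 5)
Your argument is correct and is exactly the adaptation the paper intends, since the paper omits this proof with the remark that it is ``similar to Lemma~\ref{lem:vsaver}'': you use the good-tile/stopping-interval maximality to produce a point $x_0\in I_P$ with $M(|\vec g|{\bf 1}_{3Q})(x_0)\lesssim \langle|\vec g|\rangle_{3Q}$ and then run the same dyadic tail decomposition of $\widetilde{\chi}$, now with $L^1$ averages. You also correctly handle the only genuinely new feature of the dual size, namely that the supremum runs over dyadic $I'\supset I_P$ of arbitrary size, by controlling all the averages $\langle|\vec g|\rangle_{2^{k+1}(3I')}$ through the maximal function evaluated at $x_0\in I_P\subset I'$.
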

The proof is similar to the one in previous Lemma \ref{lem:vsaver} for vectorial size. 

\begin{proof}[Proof of Proposition \ref{prop:claim1}]

Recall that for vectorial tree $\vec{T}\subset \mathrm{G}_{(f,|\vec g|), (2,1),Q}$, by Lemma \ref{lem:local}, we have the estimate 
$$
\Lambda_{\vec{T}}(f,\vec g)\lesssim \ds(|\vec g|)\cdot \vs(f)\cdot |I_T|.
$$
Let 
$$
\mathrm{G}_{(f,|\vec g|),(2,1),Q}:=\po_{\le }(Q)\setminus \Big(\bigcup_{I\in \mathcal{I}_{(f,|\vec g|),(2,1),Q}}\po_{\le}(I)\Big).
$$
Note that $\mathrm{G}:=\mathrm{G}_{(f,|\vec g|), (2,1),Q}$ is possibly a smaller  collection than collections $\mathrm{G}_{f, 2,Q}$ and $\mathrm{G}_{|\vec g|,1,Q}$ and hence the estimates proved in Lemma~\ref{lem:vsaver} and \ref{lem:dsaver} are valid for this collection. Therefore we shall use, for the collection $\po=\mathrm{G}$, the single tree estimate proved in Lemma \ref{lem:local}, Lemmas~\ref{lem:vsaver} and \ref{lem:dsaver}, the decomposition Lemmas~\ref{dvs} and \ref{dds}  with $\lambda=\lambda_1= \langle f\rangle_{2,3Q}$ and $\lambda=\lambda_2= \langle g\rangle_{3Q}$ respectively, and Corollaries \ref{corenergy} and \ref{cormass} to get the result. Indeed, given $\theta_1+\theta_2=1$, we have
\begin{align}
\label{eq:cuenta}
\notag 
\Lambda_{\mathrm{G}}(f,\vec g)&\lesssim \sum_{n_1\in \N}\sum_{n_2\in \N}\sum_{\vec{T}\in \mathbb{T}_{n_1}\cap \mathbb{T}_{n_2}}\Lambda_{\vec{T}\in \mathbb{T}_{n_1}\cap \mathbb{T}_{n_2}}(f,\vec g)\\
\notag&\overset{\text{Lemma } \ref{lem:local}}{\lesssim}\sum_{n_1}\sum_{n_2}\sum_{\vec{T}}\overrightarrow{\operatorname{size}}_{\vec{T}\in \po_{n_1}}(f)\cdot\widetilde{\operatorname{size}}_{\vec{T}\in \po_{n_2}}(|\vec{g}|)\cdot |I_T|\\
\notag&\overset{\text{Cor. }\ref{corenergy} \text{ and }\ref{cormass} }{\lesssim}\sum_{n_1}\sum_{n_2}\sum_{\vec{T}}\min{(\lambda_12^{-n_1}, \overrightarrow{\operatorname{size}}_{\po}(f)})\min{(\lambda_22^{-n_2}, \widetilde{\operatorname{size}}_{\po}(|\vec{g}|)})|I_T|\\
\notag&\overset{\text{Lemmas } \ref{lem:vsaver} \text{ and } \ref{lem:dsaver}}{\lesssim} \sum_{n_1}\sum_{n_2}2^{-n_1}\lambda_12^{-n_2}\lambda_2\sum_{\vec{T}}|I_T|^{\theta_1+\theta_2}\\
\notag&\lesssim \sum_{n_1}\sum_{n_2}2^{-n_1}\lambda_12^{-n_2}\lambda_2\big(\sum_{\vec{T}}|I_T|\big)^{\theta_1}\big(\sum_{\vec{T}}|I_T|\big)^{\theta_2}\\
\notag&\overset{\text{Cor. }\ref{corenergy} \text{ and }\ref{cormass} }{\lesssim} \sum_{n_1}\sum_{n_2}2^{-n_1}\lambda_1\big(2^{2n_1}\lambda_1^{-2}\|f1_{3Q}\|_2^2\big)^{\theta_1}2^{-n_2}\lambda_2\big(2^{n_2}\lambda_2^{-1}\||\vec g|1_{3Q}\|_1\big)^{\theta_2}\\
&=\sum_{n_1}\sum_{n_2}2^{-n_1(1-2\theta_1)}2^{-n_2(1-\theta_2)}\lambda_1^{1-2\theta_1}\lambda_2^{1-\theta_2}\|f1_{3Q}\|_2^{2\theta_1}\||\vec g|1_{3Q}\|_1^{\theta_2}.
\end{align}
First observe that $\lambda_1^{1-2\theta_1}=\langle f\rangle_{2,3Q}\frac{1}{|3Q|^{-\theta_1}}\big(\int_{3Q}|f|^2\big)^{-\theta_1}$,
so
\begin{equation}
\label{eq:lambda1}
\lambda_1^{1-2\theta_1}\|f1_{3Q}\|_2^{2\theta_1}=\langle f\rangle_{2,3Q}|3Q|^{\theta_1}.
\end{equation}
Secondly, 
\begin{equation}
\label{eq:lambda2}
\lambda_2^{1-\theta_2}\||\vec g|1_{3Q}\|_1^{\theta_2}=\frac{1}{|3Q|^{1-\theta_2}}\int_{3Q}|\vec g|=\langle |\vec g|\rangle_{3Q}|3Q|^{\theta_2}.
\end{equation}
Hence, plugging \eqref{eq:lambda1} and \eqref{eq:lambda2} into \eqref{eq:cuenta}, we obtain
$$
\Lambda_{\mathrm{G}}(f,\vec g)\le \sum_{n_1}\sum_{n_2}2^{-n_1(1-2\theta_1)}2^{-n_2(1-\theta_2)}\langle f\rangle_{2,3Q}\langle |\vec g|\rangle_{3Q}|3Q|,
$$
and it is enough to choose $\theta_1<1/2$. The proof is complete.
\end{proof}

\subsection{Proof of \eqref{eq:sparseJ}}

Let $\{Q_k:k\in \N\}$ be the intervals used in the construction of $\mathcal{S}$ in Subsection \ref{sub:sparse}. Observe that $\{Q_k:k\in \N\}$ is a partition of $\R$, so we have the splitting
$$
\po=\bigcup_{k=0}^{\infty}\po_{\le}(Q_k).
$$
Since the collection $\po$ is finite, then this union is also finite. Moreover, $\mathcal{S}=\cup_k\mathcal{S}(Q_k)$, thus the estimate \eqref{eq:sparseJ} (and hence Theorem \ref{thm:main2}, as explained above) is a consequence of 
\begin{equation}
\label{eq:newes}
\Lambda_{\po_{\le}(Q_k)}(f,\vec g)\le K\sum_{Q\in \mathcal{S}(Q_k)}|3Q|\langle f\rangle_{2,3Q}\langle |\vec g|\rangle_{3Q}.
\end{equation}
In its turn, \eqref{eq:newes} is obtained by iteration of the lemma below, starting with $Q=Q_k$, which is valid because $\supp f, \supp |\vec g|\subset 3Q_k$. 
\begin{lemma}
\label{lem:recur}
Let $f,\vec g$ be as above and $Q\in \mathcal{D}$. For a collection of tiles $\po$ such that $\{I_P:P\in \po\}\subset \mathcal{D}$, there holds
$$
\Lambda_{\po_{\le}(Q)}(f{\bf 1}_{3Q},\vec g{\bf 1}_{3Q})\le K|3Q|\langle f\rangle_{2,3Q}\langle |\vec g|\rangle_{3Q}+\sum_{I\in \mathcal{I}_{(f,|\vec g|),(2,1),Q}}\Lambda_{\po\le (I)} (f1_{3I},\vec g 1_{3I}).
$$
\end{lemma}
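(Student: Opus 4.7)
The plan is to decompose $\po_{\le}(Q)$ into its good tiles and the stopping subtrees, handle the good piece by Proposition~\ref{prop:claim1}, and reduce each stopping piece to the ``diagonal'' form on the right-hand side of the lemma while absorbing the off-support cross terms into the main estimate.

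First, by definition of $\mathrm{G}:=\mathrm{G}_{(f,|\vec g|),(2,1),Q}$, one has the partition
$$
\po_{\le}(Q)=\mathrm{G}\,\sqcup\,\bigsqcup_{I\in \mathcal{I}_{(f,|\vec g|),(2,1),Q}}\po_{\le}(I),
$$
which is disjoint because the stopping intervals are pairwise disjoint maximal dyadic intervals. Splitting the bilinear form accordingly isolates a good contribution $\Lambda_{\mathrm{G}}(f{\bf 1}_{3Q},\vec g{\bf 1}_{3Q})$, which is directly bounded by $K|3Q|\langle f\rangle_{2,3Q}\langle |\vec g|\rangle_{3Q}$ via Proposition~\ref{prop:claim1} (since $f{\bf 1}_{3Q}$ and $\vec g{\bf 1}_{3Q}$ are supported in $3Q$), plus stopping contributions $\sum_{I}\Lambda_{\po_{\le}(I)}(f{\bf 1}_{3Q},\vec g{\bf 1}_{3Q})$.

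For each stopping interval $I$, I would expand
$$
f{\bf 1}_{3Q}=f{\bf 1}_{3I}+f{\bf 1}_{3Q\setminus 3I},\qquad \vec g{\bf 1}_{3Q}=\vec g{\bf 1}_{3I}+\vec g{\bf 1}_{3Q\setminus 3I},
$$
so that $\Lambda_{\po_{\le}(I)}(f{\bf 1}_{3Q},\vec g{\bf 1}_{3Q})$ breaks into the diagonal term $\Lambda_{\po_{\le}(I)}(f{\bf 1}_{3I},\vec g{\bf 1}_{3I})$ (exactly the form on the right-hand side of the lemma) plus three cross terms in which at least one entry is a tail $f{\bf 1}_{3Q\setminus 3I}$ or $\vec g{\bf 1}_{3Q\setminus 3I}$. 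These cross terms must be absorbed into the good-piece bound. To that end I would rerun the argument of Proposition~\ref{prop:claim1} on the subcollection $\po_{\le}(I)$: since $I_P\subset I$ for every $P\in\po_{\le}(I)$, combining Proposition~\ref{size} with the maximality property~(\ref{eq:maxi}) of the stopping intervals gives
$$
\overrightarrow{\operatorname{size}}_{\po_{\le}(I)}(f{\bf 1}_{3Q\setminus 3I})\lesssim \inf_{x\in 3I}M_2(f{\bf 1}_{3Q})(x)\lesssim \langle f\rangle_{2,3Q},
$$
together with the analogous dual-size bound $\widetilde{\operatorname{size}}_{\po_{\le}(I)}(|\vec g|{\bf 1}_{3Q\setminus 3I})\lesssim \langle |\vec g|\rangle_{3Q}$ via $M(|\vec g|{\bf 1}_{3Q})$; the same estimates hold trivially with $f{\bf 1}_{3I}$, $\vec g{\bf 1}_{3I}$ in place of the tails. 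Feeding these sizes into the computation~(\ref{eq:cuenta}) and invoking Corollaries~\ref{corenergy}--\ref{cormass} produces for each cross term a bound of order $|I|\langle f\rangle_{2,3Q}\langle |\vec g|\rangle_{3Q}$; summing over $I$ and using the sparsity packing~(\ref{eq:packi}) gives a total of order $|Q|\langle f\rangle_{2,3Q}\langle |\vec g|\rangle_{3Q}$, which is absorbed into $K|3Q|\langle f\rangle_{2,3Q}\langle |\vec g|\rangle_{3Q}$.

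The hard part will be establishing the size estimates for the tail-restricted data $f{\bf 1}_{3Q\setminus 3I}$ on the subcollection $\po_{\le}(I)$: the $\widetilde{\chi}_{I_P}$-decay of the wave packets must be exploited to convert the absence of mass on $3I$ into weighted averages that are dominated by the Hardy--Littlewood maximal function at a point of $3I$, and this point must in turn be controlled via the stopping-time definition of $I$. Once this localized size bound is in hand, the remaining tree-selection combinatorics are verbatim those of Proposition~\ref{prop:claim1}, and the recursive structure of the lemma follows immediately.
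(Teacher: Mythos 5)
Your opening moves coincide with the paper's: split $\po_{\le}(Q)$ into the good tiles $\mathrm{G}_{(f,|\vec g|),(2,1),Q}$, handled by Proposition \ref{prop:claim1}, plus the collections $\po_{\le}(I)$ over the stopping intervals, and then split the data into the in/out parts $f{\bf 1}_{3I}$, $f{\bf 1}_{3Q\setminus 3I}$ (the paper's $\vec t\in\{\operatorname{in},\operatorname{out}\}^2$ decomposition). The genuine gap is in your treatment of the cross terms. You claim that the uniform size bounds $\overrightarrow{\operatorname{size}}_{\po_{\le}(I)}(f{\bf 1}_{3Q\setminus 3I})\lesssim \langle f\rangle_{2,3Q}$ and $\widetilde{\operatorname{size}}_{\po_{\le}(I)}(|\vec g|{\bf 1}_{3Q\setminus 3I})\lesssim \langle |\vec g|\rangle_{3Q}$, fed ``verbatim'' into the computation \eqref{eq:cuenta} via Corollaries \ref{corenergy} and \ref{cormass}, give a bound of order $|I|\langle f\rangle_{2,3Q}\langle |\vec g|\rangle_{3Q}$ per cross term. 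They do not: the counting in Corollaries \ref{corenergy} and \ref{cormass} is in terms of the \emph{global} quantities $\|f{\bf 1}_{3Q}\|_2^2$ and $\||\vec g|{\bf 1}_{3Q}\|_1$, so rerunning \eqref{eq:cuenta} on $\po_{\le}(I)$ with $\lambda_1=\langle f\rangle_{2,3Q}$, $\lambda_2=\langle|\vec g|\rangle_{3Q}$ reproduces the factor $|3Q|^{\theta_1+\theta_2}=|3Q|$, not $|I|$. Since only $\sum_I|I|$ is controlled by the packing property \eqref{eq:packi}, while the number of stopping intervals is unbounded, a per-term bound of size $|3Q|\langle f\rangle_{2,3Q}\langle|\vec g|\rangle_{3Q}$ cannot be summed over $I$. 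A size bound with no quantitative gain from the spatial separation between $\supp(f{\bf 1}_{3Q\setminus 3I})$ and the time intervals $I_P\subset I$ is simply too weak; producing the localizing factor $|I|$ is exactly the step you defer as ``the hard part,'' and your proposed combinatorics do not deliver it.

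This is what the paper's Lemma \ref{lem:claim3} supplies, by a mechanism different from re-running Proposition \ref{prop:claim1}: one decomposes $\po_{\le}(I)$ into the constant-scale collections $\po_{=}(J)$ with $J\in\mathcal{J}_k$ (i.e. $2^kJ\subset I$, $2^{k+1}J\not\subset I$), observes that each $\po_{=}(J)$ is a single vectorial tree so that the single-tree estimate of Lemma \ref{lem:local} applies directly, with no stopping-time or counting lemmas at this stage, and exploits $\dist(J,\supp(|\vec g|{\bf 1}_{I^{\operatorname{out}}}))\sim 2^k|J|$ to obtain the decaying single-scale bound \eqref{eq:53}, a gain of $A^{-98}$ with $A\sim 2^k$. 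Summing over $J\in\mathcal{J}_k$ using $\sum_{J\in\mathcal{J}_k}|J|\lesssim|I|$ and over $k$ using the decay yields $\Lambda^{\vec t}_{\po_{\le}(I)}(f,\vec g)\lesssim |I|\inf_{3I}M_2 f\cdot\inf_{3I}M_1|\vec g|$, and only then is \eqref{eq:maxi} used to convert the infima into $\langle f\rangle_{2,3Q}\langle|\vec g|\rangle_{3Q}$, after which summation over $I$ is harmless. To repair your argument you would need either this scale-by-scale estimate or localized versions of the decomposition/counting lemmas in which $\|f\|_2^2$ and $\|g\|_1$ are replaced by quantities of order $|I|\bigl(\inf_{3I}M_2f\bigr)^2$ and $|I|\inf_{3I}M_1|\vec g|$; neither follows verbatim from the ingredients you cite.
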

Since $\po_{\le}(Q_k)$ is finite, the collections $\po_{\le}(I)$ will be empty after a finite number of iterations, at which point the iterative procedure leading to \eqref{eq:newes} is complete. Let us describe then how to show Lemma \ref{lem:recur}.

For the sake of brevity, let us assume that the functions $f$ and $|\vec g|$ are supported on $3Q$. We decompose 
$$
\Lambda_{\po_{\le}(Q)}(f,\vec{g})\le \sum_{k=1}^{N}\sum_{P\in \mathrm{G}_{(f,|\vec g|), (2,1),Q}\cap \po_k}|I_P||\langle f,\phi_P\rangle| |\langle g_k,\phi_P\rangle|+\sum_{I\in \mathcal{I}_{(f,|\vec g|),(2,1),Q}}\Lambda_{\po\le (I)} (f,\vec g).
$$
The first term satisfies the estimate in Proposition \ref{prop:claim1}.
Concerning the second one, we will prove the following recursive estimate. 

\begin{lemma}
\label{lem:claim2}
Let $f,\vec g$ as above and $Q\in \mathcal{D}$. For a collection of tiles $\po$ such that $\{I_P:P\in \po\}\subset \mathcal{D}$, there holds
\begin{equation}
\label{eq:claim2}
\sum_{I\in \mathcal{I}_{(f,|\vec g|),(2,1),Q}}\Lambda_{\po\le (I)} (f,\vec g)\lesssim K|Q| \langle f\rangle_{2,3Q}\,\langle |\vec g|\rangle_{3Q}+\sum_{I\in \mathcal{I}_{(f,|\vec g|),(2,1),Q}}\Lambda_{\po\le (I)} (f{\bf 1}_{3I},\vec g{\bf 1}_{3I}).
\end{equation}
\end{lemma}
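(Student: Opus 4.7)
The plan is to reduce $\Lambda_{\po_{\le}(I)}(f,\vec g)$ to its $3I$-localized counterpart $\Lambda_{\po_{\le}(I)}(f\mathbf{1}_{3I},\vec g\mathbf{1}_{3I})$ up to an error that, after summation over $I\in \mathcal{I}:=\mathcal{I}_{(f,|\vec g|),(2,1),Q}$, is controlled by $K|Q|\langle f\rangle_{2,3Q}\langle|\vec g|\rangle_{3Q}$. Since $f$ and $|\vec g|$ are supported in $3Q$, splitting $f = f\mathbf{1}_{3I} + f\mathbf{1}_{3Q\setminus 3I}$, analogously for $\vec g$, and expanding the absolute-value bilinear form gives
\[
\Lambda_{\po_{\le}(I)}(f,\vec g)\le \Lambda_{\po_{\le}(I)}(f\mathbf{1}_{3I},\vec g\mathbf{1}_{3I}) + E(I),
\]
where $E(I)$ gathers the three bilinear forms each involving at least one tail factor $\mathbf{1}_{3Q\setminus 3I}$. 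It is therefore enough to bound $\sum_{I\in \mathcal{I}} E(I)$ as claimed; the packing $\sum_I|I|\le |Q|/2$ from \eqref{eq:packi} then delivers the $|Q|$.

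The cornerstone is a decay-enhanced size estimate for the tail factors. For every $P\in\po_{\le}(I)$ one has $I_P\subset I$, so $\dist(x,I_P)\ge |I|$ when $x\in 3Q\setminus 3I$. Splitting the integral of Proposition~\ref{size} into annuli $2^{k+1}I_P\setminus 2^kI_P$, only the annuli with $2^k|I_P|\gtrsim |I|$ contribute; each yields a factor $2^{(1-M)k}$, and the dilate $2^{k+1}I_P$ automatically contains a point $x_0\in 3I$ at which \eqref{eq:maxi} gives $M_2(f\mathbf{1}_{3Q})(x_0)\lesssim \langle f\rangle_{2,3Q}$. Summing the geometric series produces
\[
\vs_{\po_{\le}(I)}(f\mathbf{1}_{3Q\setminus 3I})\lesssim \langle f\rangle_{2,3Q}, \qquad \ds_{\po_{\le}(I)}(|\vec g|\mathbf{1}_{3Q\setminus 3I})\lesssim \langle|\vec g|\rangle_{3Q}.
\]
The same stopping condition \eqref{eq:maxi} also supplies the $L^p$-norm bounds $\|f\mathbf{1}_{3I}\|_2\lesssim |I|^{1/2}\langle f\rangle_{2,3Q}$ and $\||\vec g|\mathbf{1}_{3I}\|_1\lesssim |I|\langle|\vec g|\rangle_{3Q}$, which will handle the ``core'' factors in $E(I)$.

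Each of the three summands of $E(I)$ is then estimated by running precisely the chain of inequalities in \eqref{eq:cuenta} with $Q$ replaced by $I$: the single vectorial tree bound of Lemma~\ref{lem:local}, the stopping-time decompositions of Corollaries~\ref{corenergy} and~\ref{cormass}, and the $\theta_1+\theta_2=1$ geometric-mean interpolation. For the diagonal tail-tail summand $\Lambda_{\po_{\le}(I)}(f\mathbf{1}_{3Q\setminus 3I},\vec g\mathbf{1}_{3Q\setminus 3I})$ both sizes are bounded by the $3Q$-averages above. For the two cross summands one size is still bounded by a $3Q$-average while the other is ``absorbed'' by the energy bound in Corollary~\ref{corenergy} or \ref{cormass} fed with the $L^p$-norm of the $3I$-localized factor. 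In all three cases the computation yields $E(I)\lesssim K|I|\langle f\rangle_{2,3Q}\langle|\vec g|\rangle_{3Q}$, and summation over $I$ completes the argument.

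The main technical obstacle lies in the two cross summands: the size of a $3I$-localized factor on $\po_{\le}(I)$ cannot be bounded by any $3Q$-average, since by construction the stopping condition makes $M_2(f\mathbf{1}_{3Q})$ large throughout $I$. Consequently the corresponding size in the interpolation must be absorbed exclusively through the $L^p$-norm entering Corollary~\ref{corenergy}/\ref{cormass}, and the exponents $\theta_1\in(0,1/2)$, $\theta_2=1-\theta_1\in(1/2,1)$ must be balanced so that the $|I|$-powers coming from the two norm factors assemble into $|I|^{\theta_1+\theta_2}=|I|$ while the geometric sums in $n_1,n_2$ remain convergent.
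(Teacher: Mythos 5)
Your outer frame coincides with the paper's (split each $\Lambda_{\po_{\le}(I)}$ into the $(\operatorname{in},\operatorname{in})$ part plus three terms containing a tail factor, bound the tails by $|I|\langle f\rangle_{2,3Q}\langle|\vec g|\rangle_{3Q}$, then use \eqref{eq:maxi} and the packing \eqref{eq:packi}), and your auxiliary estimates (the tail size bounds and $\|f{\bf 1}_{3I}\|_2\lesssim |I|^{1/2}\langle f\rangle_{2,3Q}$, $\||\vec g|{\bf 1}_{3I}\|_1\lesssim |I|\langle|\vec g|\rangle_{3Q}$) are correct. The gap is in the core step: re-running the chain \eqref{eq:cuenta} on $\po_{\le}(I)$ does not yield $E(I)\lesssim |I|\langle f\rangle_{2,3Q}\langle|\vec g|\rangle_{3Q}$. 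First, the tree-counting bounds of Corollaries \ref{corenergy} and \ref{cormass} are fed with the \emph{global} norms of the input functions; for a tail factor these are $\|f{\bf 1}_{3Q\setminus 3I}\|_2^2\sim|3Q|\langle f\rangle_{2,3Q}^2$ and $\||\vec g|{\bf 1}_{3Q\setminus 3I}\|_1\sim|3Q|\langle|\vec g|\rangle_{3Q}$, so the chain produces factors $|3Q|^{\theta_j}$, not $|I|^{\theta_j}$; already for the tail--tail term you get a bound of order $|3Q|\langle f\rangle_{2,3Q}\langle|\vec g|\rangle_{3Q}$ \emph{per stopping interval}, and summing over the (arbitrarily many) $I\in\mathcal{I}_{(f,|\vec g|),(2,1),Q}$ exceeds $K|Q|\langle f\rangle_{2,3Q}\langle|\vec g|\rangle_{3Q}$ by an unbounded factor. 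Second, for the cross terms the size of the localized core, e.g.\ $\overrightarrow{\operatorname{size}}_{\po_{\le}(I)}(f{\bf 1}_{3I})$, is (as you note) not controlled by any $3Q$-average; but in the interpolation it necessarily enters through the factor $\lambda_1^{1-2\theta_1}$ (resp.\ $\lambda_2^{1-\theta_2}$), and convergence of the $n_1,n_2$ sums forces $\theta_1<\tfrac12$, $\theta_2<1$, so these exponents are strictly positive: there is no admissible choice of $\theta_1+\theta_2=1$ that ``absorbs the size exclusively through the $L^p$ norm''. So neither the localizing factor $|I|$ nor the removal of the uncontrolled size can come out of \eqref{eq:cuenta}.

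What is missing is precisely the mechanism the paper uses in Lemma \ref{lem:claim3}: keep, rather than discard, the scale-dependent gain coming from the spatial separation. For a tile with $I_P\subset I$ at scale $|I_P|\sim 2^{-k}|I|$, the ``out'' factor is supported at distance $\gtrsim 2^{k}|I_P|$ from $I_P$, so the adapted cutoff in Proposition \ref{size} and Definition \ref{def:dual} yields a gain $\sim 2^{-100k}$ (estimate \eqref{eq:53}). One then decomposes $\po_{\le}(I)$ into the single trees $\po_{=}(J)$ and the layers $\mathcal{J}_k$, uses $\sum_{J\in\mathcal{J}_k}|J|\lesssim|I|$ together with $\inf_{3J}M_2f\lesssim 2^k\inf_{3I}M_2f$ and $\inf_{3J}M_1|\vec g|\lesssim 2^k\inf_{3I}M_1|\vec g|$, and the rapid decay beats the polynomial losses, giving $\Lambda^{\vec t}_{\po_{\le}(I)}(f,\vec g)\lesssim |I|\inf_{3I}M_2f\cdot\inf_{3I}M_1|\vec g|$. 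This per-scale bookkeeping cannot be replaced by the generic stopping-time/interpolation argument, because the latter only sees the supremum-type sizes and global norms; your tail size estimates use the separation only to recover the $3Q$-average, throwing away exactly the $2^{-100k}$ gain that produces the factor $|I|$.
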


For the proof of Lemma \ref{lem:claim2} we need some more ingredients. For each $I\in \mathcal{I}_{(f,|\vec g|),(2,1),Q}$, define
$$
\Lambda_{\po\le (I)}^{\vec{t}}(f,\vec g):=\Lambda_{\po\le (I)}(f{\bf 1}_{I^{t_1}},\vec g{\bf 1}_{I^{t_2}}):=\sum_{k=1}^{N}\sum_{P\in \po_{k \,\le}(I)}|I_P||\langle f{\bf 1}_{I^{t_1}},\phi_P\rangle| |\langle g_k {\bf 1}_{I^{t_2}},\phi_P\rangle|,
$$
where $\vec{t}=(t_1,t_2)\in \{\operatorname{in},\operatorname{out}\}^2$ and $I^{\operatorname{in}}:=3I$, $I^{\operatorname{out}}:=\R\setminus 3I$. We split now
$$
\Lambda_{\po_{\le}(I)}(f,\vec g)\le \sum_{\vec{t}\in  \{\operatorname{in},\operatorname{out}\}^2}\Lambda_{\po_{\le}(I)}^{\vec{t}}(f,\vec g).
$$
Observe that the term concerning $ (\operatorname{in},\operatorname{in})$ corresponds to the second term on the right hand side of \eqref{eq:claim2}. Then, we need to bound the terms $\Lambda_{\po_{\le}(I)}^{\vec{t}}$ such that $t_j=\operatorname{out}$ for at least one $j=1,2$. This is the content of the next lemma (which is analogous to \cite[Proposition 5.2]{CDpO}).

\begin{lemma}
\label{lem:claim3}
Let us assume that $\vec{t}$ is such that $t_j=\operatorname{out}$ for at least one $j=1,2$. Then
$$
\Lambda_{\po_{\le}(I)}^{\vec{t}}(f,\vec g)\lesssim |I|\inf_{x\in 3I}M_2f(x)\inf_{x\in 3I}M_1|\vec g|(x).
$$
\end{lemma}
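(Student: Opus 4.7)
The strategy is to run the template of the interpolation computation \eqref{eq:cuenta} applied to the collection $\po_{\le}(I)$, but fed by improved size bounds that exploit the decay of the wave packets $\phi_P$ away from the spatial interval $I_P\subseteq I$. The key observation is that whenever one of the factors is restricted to $I^{\operatorname{out}}=\R\setminus 3I$, the $L^1$-adaptation of $\phi_P$ together with $I_P\subseteq I$ forces the pairing to be small, and the corresponding size acquires the factor $\inf_{3I}M_2f$ (or $\inf_{3I}M_1|\vec g|$).

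The first step is to prove, for any function $h$ compactly supported in $\R\setminus 3I$, the improved size bounds
$$
\vs_{\po_{\le}(I)}(h)\lesssim \inf_{x\in 3I}M_2 h(x),\qquad \ds_{\po_{\le}(I)}(h)\lesssim \inf_{x\in 3I}M_1 h(x).
$$
The first bound follows from Proposition \ref{size} by splitting $\R\setminus 3I$ into the dyadic annuli $A_k:=2^{k+1}\cdot 3I\setminus 2^k\cdot 3I$, $k\ge 0$. Since $I_P\subseteq I$, on $A_k$ we have $\dist(x,I_P)\gtrsim 2^k|I|$, so $\widetilde{\chi}_{I_P}^M(x)\lesssim (|I_P|/(2^k|I|))^{100M}$. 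Combined with $|A_k|\lesssim 2^k|I|$, the integral $\frac{1}{|I_P|}\int|h|^2\widetilde{\chi}_{I_P}^M\,dx$ reduces to the geometric series
$$
\sum_{k\ge 0}\Big(\frac{|I_P|}{|I|}\Big)^{\!100M-1}2^{k(1-100M)}\langle h^2\rangle_{2^{k+1}\cdot 3I},
$$
which is summable for $M\ge 1$ and is dominated by $(\inf_{y\in 3I}M_2 h(y))^2$ because $3I\subseteq 2^{k+1}\cdot 3I$ for every $k\ge 0$. The analogous annular computation applied directly inside Definition \ref{def:dual} yields the $\ds$ bound.

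With the improved sizes in hand, I would decompose $\po_{\le}(I)$ into vectorial trees via Corollaries \ref{corenergy} and \ref{cormass} and reproduce the interpolation \eqref{eq:cuenta} with $\lambda_1=\vs(f\,{\bf 1}_{I^{t_1}})$, $\lambda_2=\ds(|\vec g|\,{\bf 1}_{I^{t_2}})$ and $\theta_1+\theta_2=1$, $\theta_1<1/2$. On an out-side coordinate, the size $\lambda_j$ already carries the desired infimum factor by Step 1. On an in-side coordinate, the bounds
$$
\|f\,{\bf 1}_{3I}\|_2\le |3I|^{1/2}\langle f\rangle_{2,3I}\le |3I|^{1/2}\inf_{x\in 3I}M_2 f(x),\qquad \||\vec g|\,{\bf 1}_{3I}\|_1\le |3I|\inf_{x\in 3I}M_1|\vec g|(x),
$$
both consequences of $\langle h\rangle_{p,3I}\le M_p h(y)$ for every $y\in 3I$, supply the same infima. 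For the out-side $L^2$- or $L^1$-norm that enters the packing inequalities of Corollaries \ref{corenergy} and \ref{cormass}, the same annular argument of Step 1 (now at scale $|I|$ instead of $|I_P|$) produces the weighted tail estimate $\int|f|^2\widetilde{\chi}_I^M\,dx\lesssim |I|(\inf_{3I}M_2 f)^2$, and feeding it into the selection procedure of Lemma \ref{dvs} gives $\sum_{\vec T}|I_T|\lesssim |I|$. Balancing the exponents $\theta_1,\theta_2$ then makes the $|I|$-powers coming from sizes, energies and masses telescope into a single clean factor of $|I|$, completing the proof.

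The principal technical obstacle is precisely this last packing inequality, because the naive bound from Lemma \ref{dvs} only gives $\sum_{\vec T}|I_T|\lesssim \lambda^{-2}\|f\,{\bf 1}_{I^{\operatorname{out}}}\|_2^2$, and $\|f\,{\bf 1}_{I^{\operatorname{out}}}\|_2^2$ is not directly controlled by $|I|(\inf_{3I}M_2 f)^2$. The remedy is to replace the unweighted $L^2$-norm by the weighted tail above inside the tree-selection argument; this is the point at which the authors' approach---avoiding the outer-space theory used in \cite[Proposition 5.2]{CDpO}---requires an elementary but careful re-examination of the decomposition lemmas.
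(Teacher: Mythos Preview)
Your strategy---rerunning the interpolation template \eqref{eq:cuenta} on $\po_{\le}(I)$ with improved sizes---has a genuine gap that you yourself name but do not close. The packing inequalities in Corollaries~\ref{corenergy} and~\ref{cormass} produce $\sum_{\vec T}|I_T|\lesssim \lambda_1^{-2}\|f{\bf 1}_{I^{t_1}}\|_2^2$ and $\sum_{\vec T}|I_T|\lesssim \lambda_2^{-1}\||\vec g|{\bf 1}_{I^{t_2}}\|_1$, and when $t_j=\operatorname{out}$ these global norms are simply not controlled by $|I|(\inf_{3I}M_2f)^2$ or $|I|\inf_{3I}M_1|\vec g|$. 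Your proposed remedy, ``replace the unweighted norm by the weighted tail inside the tree-selection argument,'' amounts to proving a weighted Bessel inequality of the type $\sum_{\vec T}|I_T|\lesssim \lambda^{-2}\int|f|^2\widetilde\chi_I^M$ for the selection in Lemma~\ref{dvs}. That is not a trivial modification: the standard proof uses almost-orthogonality of the wave packets in unweighted $L^2$, and it is not clear that the selected tree tops---which can overlap in time and are not a priori bounded in number by $|I|$---satisfy a weighted version. You have not supplied this argument, so the proof is incomplete at its crucial step.

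The paper avoids this obstacle entirely by a different and much simpler route. Instead of decomposing $\po_{\le}(I)$ into trees via the stopping-time lemmas, it decomposes by \emph{spatial scale}: write $\po_{\le}(I)=\bigcup_J\po_{=}(J)$ over the dyadic $J\subseteq I$, and observe that each $\po_{=}(J)$ is already a single vectorial tree. Then Lemma~\ref{lem:local} applies directly to each piece, with no packing inequality needed. The ``out'' restriction on $\vec g$ forces $\operatorname{dist}(J,\operatorname{supp}|\vec g|{\bf 1}_{I^{\operatorname{out}}})\sim 2^k|J|$ when $2^kJ\subseteq I$ but $2^{k+1}J\not\subseteq I$, which gives the dual size a factor $2^{-100k}$; the intervals $J$ at each fixed $k$ have finite overlap and total length $\lesssim|I|$; and the infima over $3J$ are compared to those over $3I$ at the cost of $2^{2k}$. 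Summing the resulting geometric series over $k$ yields the lemma. This Whitney-type decomposition of the tiles, rather than of the function or via tree selection, is the key idea you are missing.
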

\begin{proof}
Let us assume without loss of generality that $t_2=\operatorname{out}$.
First, we will prove the following: Let $J$ be an interval and $\supp |\vec g|\,\cap AJ=\emptyset$, with $A\ge3$. Then
\begin{equation}
\label{eq:53}
\Lambda_{\po_{=}(J)}(f,\vec{g}):=\sum_{k=1}^{N}\sum_{P\in \po_{k \,=}(J)}|I_P||\langle f,\phi_P\rangle| |\langle g_k,\phi_P\rangle|\lesssim A^{-98}|J|\inf_{x\in 3J}M_2f(x)\inf_{x\in 3J}M_1|\vec g|(x).
\end{equation}
Indeed, note that the collection $\po_{=}(J)$ is a vectorial tree with top (time interval) $I_J$. Then by Lemma \ref{lem:local}, Proposition \ref{size} and Definition \ref{def:dual},
\begin{align*}
\Lambda_{\po_{=}(J)}(f,\vec{g})&\lesssim |I_J| \,\overrightarrow{\operatorname{size}}_{\po_{=}(J)}(f)\,\widetilde{\operatorname{size}}_{\po_{=}(J)}(|\vec g|)\\
&\lesssim |I_J| \Big(\frac{1}{|I_J|}\int_{\R}|f(x)|^2\widetilde{\chi}_{I_J}(x)\,dx\Big)^{1/2}\Big(\frac{1}{|I_J|}\int_{\R}|\vec g(x)|\widetilde{\chi}_{I_J}(x)\,dx\Big).
\end{align*}
 Observe that, given $\vec g$ and any $J$ such that $\supp(|\vec g|)\cap 3J=\emptyset$, we have
\begin{align*}
\frac{1}{|J|}\int_{\R}|\vec g (x)|\widetilde{\chi}_{J}(x)\,dx&=\frac{1}{|J|}\int_{3J}|\vec g(x)|\widetilde{\chi}_{J}(x)\,dx\\
&\quad +\sum_{k\ge1}\frac{1}{|J|}\int_{2^{k+1}(3J)\setminus 2^k(3I)}|\vec g(x)|\Big(1+\frac{\dist(x,J)}{|J|}\Big)^{-100}\,dx\\
&\le \sum_{k\ge 1}\frac{1}{|J|}\int_{2^{k+1}(3J)}|\vec g (x)|\Big(1+\frac{3\cdot 2^k|J|}{|J|}\Big)^{-100}\,dx\\
&\le 3^{-99}\inf_{x\in 3J}M_1|\vec g|(x).
\end{align*}
Taking this into account we conclude that 
$$
\Lambda_{\po_{=}(J)}(f,\vec{g})\lesssim 3^{-98}|I_J|\inf_{x\in 3I}M_2f(x)\inf_{x\in 3I}M_1|\vec g|(x).
$$

Let us follow again the ideas in \cite{CDpO}. Let $\mathcal{J}=\{J:J=I_P \text{ for some } P\in \po_{\le}(I)\}$. We partition 
$$
\mathcal{J}_k=\{J\in \mathcal{J}:2^kJ\subset I, 2^{k+1}J\not\subset I\}, \qquad \po_{\le,k}(I)=\{P\in \po_{\le}(I):I_P\in \mathcal{J}_k\}.
$$
Note the following properties of the intervals $J\in \mathcal{J}_k$:
$$
\dist(J, \supp |\vec g| 1_{I^{\operatorname{out}}})\sim 2^k |J|, \quad J\in \mathcal{J}_k \text{ have finite overlap and } \sum_{J\in \mathcal{J}_k}|J|\lesssim |I|,
$$
and 
$$
\inf_{x\in 3J}M_2 f(x)\lesssim 2^k\inf_{x\in 3I}M_2 f(x), \quad \inf_{x\in 3J}M_1 |\vec g|(x)\lesssim 2^k\inf_{x\in 3I}M_1 |\vec g|(x).
$$
Then, by using \eqref{eq:53} and the above properties, we obtain
\begin{multline*}
\Lambda_{\po_{\le}(I)}^{\vec{t}}(f,\vec g)\lesssim \sum_{k\ge0}\sum_{J\in \mathcal{J}_k}\Lambda_{\po_{=}(J)}(f1_{I^{t_1}},|\vec g|1_{I^{\operatorname{out}}})\\\lesssim \sum_{k\ge0}\sum_{J\in \mathcal{J}_k}2^{-100k}|J|\inf_{x\in 3J}M_2f(x)\inf_{x\in 3J}M_1|\vec g|(x)\le |I|\inf_{x\in 3I}M_2f(x)\inf_{x\in 3I}M_1|\vec g|(x).
\end{multline*}
The proof is complete.
\end{proof}
With Lemma \ref{lem:claim3} we can conclude the proof of Lemma \ref{lem:claim2}. Indeed, 
$$
\Lambda_{\po_{\le}(I)}^{\vec{t}}(f,\vec g)\lesssim |I|\inf_{x\in 3I}M_2f(x)\inf_{x\in 3I}M_1|\vec g|(x) \lesssim |I|\langle f\rangle_{2,3Q}\langle |\vec g|\rangle_{3Q},
 $$
 where the last inequality follows from \eqref{eq:maxi}. Summing over $I$ yields the desired result in Lemma \ref{lem:claim2}, thus Lemma \ref{lem:recur}, therefore \eqref{eq:sparseJ} and finally Theorem \ref{thm:main2}.

\section{Quantitative weighted inequalities and further discussions}
\label{sec:Qweight}

\subsection{Quantitative weighted estimates}

One of the main consequences of sparse domination is that it provides weighted norm inequalities for operators under consideration with explicit constants involving the weighted $A_p$ characteristic. For a summary of this and other applications, we refer the reader to \cite[Section 4]{BC} and references therein, see also \cite{FN}. Quantitative weighted estimates may be deduced for operators dominated by bilinear sparse forms as a consequence of the following theorem. Let $\mathrm{D}$ be the space of test function on $\R^n$ with the property that it is dense in $L^p(w)$ for all $1\le p<\infty$ and all weights $w\in A_{\infty}$. 
\begin{theorem}[\cite{BFP}]
\label{lem:BFP}
Let $1\le p_0<q_0\le \infty$. Let $T$ be a (sub)linear operator, initially defined on $\mathrm{D}$, with the following property: There exists 
 $c>0$ such that for all $f,g\in \mathrm{D}$ there exists a sparse collection $\mathcal{S}$ with 
$$
|\langle Tf,g\rangle|\le c\sum_{I\in \mathcal{S}} |I|\langle f\rangle_{p_0,I}\langle  g\rangle_{q'_0,I}. 
$$
Then for any $p_0<p<q_0$ and every weight $w\in A_{p/p_0}$,
$$
\|T\|_{L^p(w)\to L^{p}(w)}\lesssim [w^{(q_0/p)'}]_{A_{\phi(p)}}^{\max\big(\frac{1}{p-p_0},\frac{q_0-1}{q_0-p}\big)/\big(\frac{q_0}{p}\big)'},
$$
where 
$$
\phi(p)=(q_0/p)'(p/p_0-1)+1,
$$ 
and the exponent in the last estimate is optimal for sparse operators. 
The constants involved in the inequalities depend on $p_0, q_0$ and $p$.
\end{theorem}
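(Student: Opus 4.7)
The plan is to reduce the operator bound to a bilinear sparse form estimate via duality and then pull the weight into the sparse form by a rescaling that turns the problem into one controllable by a weighted Hardy--Littlewood maximal function. Concretely, by duality it suffices to show that for all $f \in L^p(w)$ and $g \in L^{p'}(\sigma)$, with $\sigma = w^{1-p'}$,
$$
\Lambda_{\mathcal{S}}(f,g) := \sum_{I\in \mathcal{S}} |I|\langle f\rangle_{p_0,I}\langle g\rangle_{q_0',I} \lesssim [w^{(q_0/p)'}]_{A_{\phi(p)}}^{\alpha}\, \|f\|_{L^p(w)}\, \|g\|_{L^{p'}(\sigma)},
$$
where $\alpha = \max\!\bigl(\tfrac{1}{p-p_0},\tfrac{q_0-1}{q_0-p}\bigr)/\bigl(\tfrac{q_0}{p}\bigr)'$, since Theorem \ref{thm:main} already furnishes the hypothesized sparse domination for $T$ on $\mathrm{D}$ with the required averages.

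Next I would transfer the problem to the canonical $p_0=1$, $q_0=\infty$ setting by writing $F = |f|^{p_0}$ and $G = |g|^{q_0'}$, so that $\langle f\rangle_{p_0,I} = \langle F\rangle_{I}^{1/p_0}$ and $\langle g\rangle_{q_0',I} = \langle G\rangle_{I}^{1/q_0'}$. Introduce the measure $u := w^{(q_0/p)'}$; the hypothesis $w\in A_{p/p_0}$ is equivalent (by Hölder and the usual $A_p$–$A_{p'}$ duality) to $u\in A_{\phi(p)}$ with $[u]_{A_{\phi(p)}}\sim [w]_{A_{p/p_0}}^{(q_0/p)'}$. This puts the sparse form in a shape where $F$ and $G$ are paired against two compatible measures whose indices lie in the classical Muckenhoupt range.

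Now I would exploit the sparse/Carleson structure directly. Using the pairwise disjoint sets $E_I\subset I$ with $|E_I|\ge \tfrac{1}{6}|I|$, replace $|I|$ inside the sum by $|E_I|$ and dominate the resulting quantity pointwise on $E_I$ by products of averages. A two-sided principal-interval stopping argument — one chain of stopping intervals adapted to $f$ and the weight $w\,dx$, the other to $g$ and $\sigma\,dx$ — telescopes the sparse form into an $L^{p/p_0}(w)$-norm of a weighted maximal function of $F$ paired with an $L^{p'/q_0'}(\sigma)$-norm of a weighted maximal function of $G$. Applying Buckley's sharp bound $\|M\|_{L^q(u)\to L^q(u)}\lesssim [u]_{A_q}^{1/(q-1)}$ to each factor, and optimizing the balance between the two stopping-chains (choose which factor absorbs the weighted characteristic), produces the asymmetric maximum $\max\!\bigl(\tfrac{1}{p-p_0},\tfrac{q_0-1}{q_0-p}\bigr)$ in the exponent after dividing by $\bigl(\tfrac{q_0}{p}\bigr)'$ coming from the change of measure $u\leftrightarrow w$.

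The main obstacle will be the exponent bookkeeping. The transition from $[w]_{A_{p/p_0}}$ to $[u]_{A_{\phi(p)}}$ and the balancing of the two stopping-time estimates must be tracked precisely so that no powers are lost in Hölder's inequality or in the application of Buckley's bound, and the crossover point $p = (p_0 + q_0)/2$ — where the two arguments in the maximum coincide — has to be identified cleanly, since one regime corresponds to letting the $f$-side carry the weighted characteristic and the other regime to letting the $g$-side carry it. Optimality of the exponent is then verified by testing against power weights $w(x) = |x|^\alpha$ with $\alpha$ driven to the boundary of the $A_{p/p_0}$ class, using the explicit action of the sparse form on characteristic functions of nested intervals.
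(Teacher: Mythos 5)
First, note that the paper does not prove this statement at all: Theorem~\ref{lem:BFP} is imported verbatim from \cite{BFP} (see also \cite{Li,FN}), so the relevant comparison is with the proof given there, whose overall architecture (duality, rescaling $F=|f|^{p_0}$, $G=|g|^{q_0'}$, stopping/Carleson arguments with maximal functions, and power-weight examples for optimality) your sketch does mirror. However, two steps of your outline have genuine gaps. The first is the claimed equivalence ``$w\in A_{p/p_0}$ is equivalent to $u=w^{(q_0/p)'}\in A_{\phi(p)}$ with $[u]_{A_{\phi(p)}}\sim[w]_{A_{p/p_0}}^{(q_0/p)'}$''. This is false for $q_0<\infty$: the correct equivalence (Johnson--Neugebauer) is $u\in A_{\phi(p)}$ if and only if $w\in A_{p/p_0}\cap RH_{(q_0/p)'}$, with $[u]_{A_{\phi(p)}}\sim\big([w]_{A_{p/p_0}}[w]_{RH_{(q_0/p)'}}\big)^{(q_0/p)'}$; membership in $A_{p/p_0}$ alone does not control the reverse H\"older constant, so your reduction loses the quantity the theorem is actually stated in terms of. (In the application of this paper one has $q_0=\infty$, $(q_0/p)'=1$, and the reverse H\"older condition is vacuous, which is why the loose formulation causes no harm there, but your proof is of the general statement.)

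The second and more serious gap is the mechanism producing the exponent $\max\big(\tfrac{1}{p-p_0},\tfrac{q_0-1}{q_0-p}\big)/\big(\tfrac{q_0}{p}\big)'$. Bounding the sparse form by a product of two weighted maximal-function norms and ``applying Buckley's sharp bound to each factor'' cannot give a maximum: the two Buckley exponents would add, yielding a strictly larger power in general. In the actual proofs the characteristic enters exactly once, through the $A_{\phi(p)}$ (or $A_{p/p_0}$ plus $RH$) definition applied to the product of averages $\langle f\rangle_{p_0,I}\langle g\rangle_{q_0',I}$ rewritten with respect to the measures $w\,dx$ and $\sigma\,dx$; the remaining factors are controlled by dyadic maximal operators taken \emph{with respect to those measures}, whose $L^s(\mu)\to L^s(\mu)$ norms are universal (independent of $\mu$), or alternatively by a Carleson-embedding/$A_\infty$ argument as in \cite{Li}. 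The asymmetry of the maximum comes from choosing on which side the single power of the characteristic is spent, not from balancing two Buckley bounds, and the division by $(q_0/p)'$ is an artifact of re-expressing $[w]_{A_{p/p_0}}[w]_{RH_{(q_0/p)'}}$ as $[u]_{A_{\phi(p)}}^{1/(q_0/p)'}$ rather than of a ``change of measure'' in the duality. As written, your exponent bookkeeping would not close; the optimality discussion via power weights is fine in spirit and is indeed how \cite{BFP,LPR} proceed.
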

Actually, Theorem \ref{lem:BFP} above can be stated also in terms of the characteristic of weights belonging to the intersection of the $A_p$ and the reverse H\"older classes, but we prefer to simplify the presentation, stating our results only in terms of the $A_p$ constant.
It is also noteworthy to observe that Theorem \ref{lem:BFP} was improved to a mixed $A_p-A_{\infty}$ type estimate, see \cite[Theorem 1.2]{Li}.

The question whether the quantitative boundedness in Theorem \ref{lem:BFP} is sharp or not arises immediately. The connection in Theorem \ref{thm:con} below between the weighted strong type estimates for $T$ and the asymptotic behavior of the unweighted $L^p$ operator norm at the endpoints $p=p_0$ and $p=q_0$ is established in \cite[Theorem 5.2]{FN} which, in its turn, is a generalisation of the results in \cite{LPR}. 

\begin{definition}
Let $1\le p_0<q_0\le \infty$. Let $T$ be a bounded operator on $L^p$ for all $p_0<p<q_0$. We define
$$
\alpha_T(p_0):=\sup \{\alpha\ge 0|\mid, \forall \varepsilon >0, \limsup_{p\to p_0}(p-p_0)^{\alpha-\varepsilon}\|T\|_{L^p\to L^p}=\infty\}.
$$
For $q_0<\infty$ we define
\[
	\gamma_T(q_0) := \sup \{ \gamma \geq 0 \mid \forall \varepsilon >0,  \limsup_{p \to q_0} \,(q_0-p)^{\gamma-\varepsilon}\|T\|_{L^p\to L^p}  = \infty\},
\]
and for $q_0=\infty$
\[
	\gamma_T(\infty) := \sup \{ \gamma \geq 0 \mid \forall \varepsilon >0,  \limsup_{p \to \infty}  \frac{\|T\|_{L^p\to L^p} }{ p^{\gamma-\varepsilon}} = \infty\}.
\]
\end{definition}
\begin{theorem}[\cite{FN}]
\label{thm:con}
Let $T$ be a bounded operator on $L^p$ for all $p_0<p<q_0$. Suppose that for some $p_0<s<q_0$ and for all $w\in A_{s/p_0}$,
$$
\|T\|_{L^s(w)\to L^{s}(w)}\lesssim  [w^{(q_0/s)'}]_{A_{\phi(s)}}^{\beta/(q_0/s)'}.
$$
Then 
$$
\beta\ge \max \Big(\frac{p_0}{s-p_0} \alpha_T(p_0), \big(\frac{q_0}{s}\big)'\gamma_T(q_0)\Big).
$$
\end{theorem}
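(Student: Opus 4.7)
The plan is to combine a sharp quantitative version of the limited-range Rubio de Francia extrapolation theorem with the asymptotic definitions of $\alpha_T(p_0)$ and $\gamma_T(q_0)$. The single weighted $L^s$ estimate supplied by the hypothesis extrapolates to weighted $L^p$ estimates across the whole open interval $(p_0,q_0)$. Specializing to $w\equiv 1$ then converts this into unweighted bounds for $\|T\|_{L^p\to L^p}$ whose blow-up rates as $p\to p_0^+$ and $p\to q_0^-$ are explicit functions of $\beta,s,p_0,q_0$. Matching those rates against the definitions of $\alpha_T(p_0)$ and $\gamma_T(q_0)$ forces the announced lower bound on $\beta$.

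Concretely, I would first invoke a quantitative limited-range extrapolation of Auscher--Martell type (in the spirit of \cite{CMP, BFP}): from the hypothesis
\[
\|T\|_{L^s(w)\to L^s(w)} \lesssim [w^{(q_0/s)'}]_{A_{\phi(s)}}^{\beta/(q_0/s)'},
\]
one derives, for every $p\in(p_0,q_0)$ and every $w\in A_{p/p_0}$, an inequality
\[
\|T\|_{L^p(w)\to L^p(w)} \le C(p,s,\beta)\,[w^{(q_0/p)'}]_{A_{\phi(p)}}^{\beta/(q_0/p)'},
\]
with the extrapolation constant $C(p,s,\beta)$ tracked explicitly through the iterations of the Rubio de Francia algorithm. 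The endpoint asymptotics I would establish are
\[
C(p,s,\beta)\lesssim (p-p_0)^{-\beta(s-p_0)/p_0}\quad(p\to p_0^+),\qquad C(p,s,\beta)\lesssim (q_0-p)^{-\beta/(q_0/s)'}\quad(p\to q_0^-),
\]
together with the natural modification $C(p,s,\beta)\lesssim p^{\beta}$ when $q_0=\infty$. These exponents arise from counting Rubio de Francia iterations at an auxiliary $L^r$-scale that drifts continuously with $p$, using the standard fact that the $L^r$-norm of the Hardy--Littlewood maximal operator is of order $(r-1)^{-1}$ as $r\to 1^+$.

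Setting $w\equiv 1$ collapses the weight characteristic to one, so the extrapolated inequality reduces to $\|T\|_{L^p\to L^p}\lesssim C(p,s,\beta)$ with the two endpoint rates above. By the definition of $\alpha_T(p_0)$, for every $\alpha<\alpha_T(p_0)$ and every $\varepsilon>0$ there is a sequence $p_n\to p_0^+$ along which $(p_n-p_0)^{\alpha-\varepsilon}\|T\|_{L^{p_n}\to L^{p_n}}\to\infty$; comparing against the first endpoint bound forces $\alpha-\varepsilon\le\beta(s-p_0)/p_0$, and letting $\varepsilon\to 0$ and $\alpha\nearrow\alpha_T(p_0)$ yields $\beta\ge\frac{p_0}{s-p_0}\alpha_T(p_0)$. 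The parallel comparison at $q_0$ delivers $\beta\ge(q_0/s)'\gamma_T(q_0)$, so $\beta$ dominates the $\max$ in the statement. The main obstacle is the first step: a qualitative extrapolation would produce a $C(p,s,\beta)$ with suboptimal dependence on $p-p_0$ and $q_0-p$, and such a bound would only recover strict inequalities too weak to pin down $\alpha_T(p_0)$ or $\gamma_T(q_0)$. What really does the work is the sharp power of the endpoint distance obtained by counting the Rubio de Francia iterations exactly; once that is in hand, the comparison with the definitions is essentially mechanical.
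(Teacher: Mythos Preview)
The paper does not supply its own proof of this statement: Theorem~\ref{thm:con} is quoted verbatim from \cite[Theorem 5.2]{FN} (which in turn extends \cite{LPR} to the limited-range setting), and is used as a black box to certify sharpness in Corollary~\ref{cor:what0}. So there is nothing in the paper to compare against.

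That said, your outline is essentially the argument that \cite{LPR} and \cite{FN} carry out. One point of presentation: framing the first step as ``extrapolate to a full weighted $L^p(w)$ bound and then set $w\equiv 1$'' is a slight detour. In \cite{LPR,FN} one goes directly from the single weighted $L^s$ hypothesis to the unweighted $L^p$ bound in one shot: for $p$ near $p_0$ one dualizes, builds an $A_1$ weight $Rh$ from the dual function $h$ via the Rubio de Francia algorithm at an auxiliary Lebesgue exponent $r=r(p)$, and inserts this weight into the $L^s(w)$ hypothesis. The blow-up then comes from $[Rh]_{A_1}\lesssim \|M\|_{L^r\to L^r}\sim (r-1)^{-1}$ together with the relation between $r$ and $p$ that forces $r\to 1$ as $p\to p_0$. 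Your description of the mechanism (``counting Rubio de Francia iterations at an auxiliary $L^r$-scale'') is exactly this, and your endpoint exponents $(p-p_0)^{-\beta(s-p_0)/p_0}$ and $(q_0-p)^{-\beta/(q_0/s)'}$ are the correct ones. The intermediate display you wrote with exponent $\beta/(q_0/p)'$ on the weight characteristic after extrapolation is not needed and is of uncertain validity, but since the characteristic collapses to $1$ when $w\equiv 1$ it is harmless here; I would simply drop that line and run the Rubio de Francia step directly.
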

For the Rubio de Francia's Littlewood--Paley square function $T$, we know due to Bourgain~\cite{B}  that $\|T\|_{L^p\to L^p}\simeq p$ for $2\leq p <\infty$. 
Observe that, in view of this, from Theorem \ref{thm:main}, Theorem \ref{lem:BFP} and Theorem~\ref{thm:con}, we infer the following estimate for the Rubio de Francia's Littlewood--Paley square function $T$ (it is exactly Corollary \ref{cor:what0} but we state it here for the sake of readeness).
 \begin{corollary}
 \label{cor:what}
 For $2<p<\infty$ and any $w\in A_{p/2}$
 $$
 \|T\|_{L^p(w)\to L^{p}(w)}\lesssim [w]_{A_{p/2}}^{\max\big(\frac{1}{p-2},1\big)}.
 $$
 In particular, for $3\le p<\infty$, we have that $\max\big(\frac{1}{p-2},1\big)=1$ is the sharp exponent in this range.
 \end{corollary}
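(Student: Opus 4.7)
The plan is to combine three ingredients already assembled in the excerpt: the bilinear sparse domination of Theorem~\ref{thm:main}, the abstract sparse-to-weighted machinery of Theorem~\ref{lem:BFP}, and the sharpness converse of Theorem~\ref{thm:con}, together with Bourgain's unweighted asymptotics $\|T\|_{L^p\to L^p}\simeq p$ as $p\to\infty$ recalled just above the statement.

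First I would extract a scalar sparse form from the vector-valued one. Pointwise linearization of the $\ell^2$-norm defining $Tf$, combined with Cauchy--Schwarz, gives for any nonnegative scalar $h$ the identity
$$
\int_{\R} Tf(x)\,h(x)\,dx \;=\; \sup_{\vec g:\,|\vec g|\le h}\,\langle Tf,\vec g\rangle,
$$
where the supremum runs over vector-valued $\vec g=(g_k)$ with $|\vec g(x)|\le h(x)$ pointwise. Applying Theorem~\ref{thm:main} to such a $\vec g$ and using $\langle|\vec g|\rangle_I\le\langle h\rangle_I$, I obtain a scalar bilinear sparse bound
$$
\int_{\R} Tf\cdot h\,dx \;\le\; K\sum_{I\in\mathcal{S}} |I|\,\langle f\rangle_{2,I}\,\langle h\rangle_{1,I},
$$
which is exactly the hypothesis of Theorem~\ref{lem:BFP} with $p_0=2$ and $q_0=\infty$ (a standard density argument allows the test function $h$ to live in a suitable dense class).

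Feeding these parameters into Theorem~\ref{lem:BFP}, I compute $(q_0/p)'=1$, so $\phi(p)=(p/p_0-1)+1=p/2$ and $[w^{(q_0/p)'}]_{A_{\phi(p)}}=[w]_{A_{p/2}}$. The exponent reduces, after interpreting $(q_0-1)/(q_0-p)$ as its limit $1$ when $q_0\to\infty$, to
$$
\max\!\Big(\tfrac{1}{p-p_0},\tfrac{q_0-1}{q_0-p}\Big)\big/(q_0/p)' \;=\; \max\!\Big(\tfrac{1}{p-2},\,1\Big),
$$
which gives precisely the claimed upper bound.

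For the sharpness assertion on $[3,\infty)$, I would invoke Theorem~\ref{thm:con} with $s=p$, $p_0=2$, $q_0=\infty$. Bourgain's estimate $\|T\|_{L^p\to L^p}\simeq p$ as $p\to\infty$ yields $\gamma_T(\infty)=1$, so any admissible exponent $\beta$ must satisfy $\beta\ge (q_0/s)'\gamma_T(\infty)=1$. Since $\max(\tfrac{1}{p-2},1)=1$ exactly on $[3,\infty)$, the upper and lower bounds match throughout this range. No step of the plan is a serious obstacle: the three theorems are invoked as black boxes and the only nontrivial point is bookkeeping through the formulas of Theorems~\ref{lem:BFP} and~\ref{thm:con} with the limiting value $q_0=\infty$, together with the (standard) passage from the vector sparse form of Theorem~\ref{thm:main} to a scalar one via pointwise linearization, neither of which degrades any exponents.
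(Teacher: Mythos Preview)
Your proposal is correct and follows essentially the same route as the paper, which simply states that the corollary follows from Theorem~\ref{thm:main}, Theorem~\ref{lem:BFP}, Theorem~\ref{thm:con}, and Bourgain's asymptotic $\|T\|_{L^p\to L^p}\simeq p$. You are in fact more explicit than the paper: your linearization step converting the vector-valued sparse form of Theorem~\ref{thm:main} into a scalar one suitable for Theorem~\ref{lem:BFP}, and your bookkeeping of the parameters $p_0=2$, $q_0=\infty$ through the formulas of Theorems~\ref{lem:BFP} and~\ref{thm:con}, spell out details the paper leaves implicit.
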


Further,  we could also try to apply sharp extrapolation theorem to check the sharpness of the quantitative estimate in Corollary \ref{cor:what} in the range $2<p<3$.
For instance, the following extrapolation result was proved by J. Duoandikoetxea in~\cite[Corollary 4.2]{D2}.

\begin{theorem}[\cite{D2}]
\label{shext}
Let $1\le \lambda<\infty$ and $\lambda\le p_0<\infty$. Assume that for some $f,g$ and for all weights $w\in A_{p_0/\lambda}$,
$$\|f\|_{L^{p_0}(w)}\le CN([w]_{A_{p_0/\lambda}})\|g\|_{L^{p_0}(w)},$$
where $N$ is an increasing function and the constant $C$ does not depend on $w$. Then for all $\lambda\le p<\infty$ and all $w\in A_{p/\lambda}$,
$$\|f\|_{L^p(w)}\le C_1N\Big(C_2[w]_{A_{p/\lambda}}^{\max\big(1,\frac{p_0-\lambda}{p-\lambda}\big)}\Big)\|g\|_{L^p(w)}.
$$
\end{theorem}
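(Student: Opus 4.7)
\textbf{Proof proposal for Theorem \ref{shext}.} The plan is to reduce to the classical sharp Rubio de Francia extrapolation at $\lambda=1$, and then run the Rubio de Francia iteration algorithm with quantitative control coming from Buckley's sharp bound for the Hardy--Littlewood maximal operator.

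\emph{Reduction to $\lambda=1$.} Set $F:=f^{\lambda}$, $G:=g^{\lambda}$, $q:=p/\lambda$, $q_0:=p_0/\lambda$. Since $\|f\|_{L^{r}(w)}^{\lambda}=\|F\|_{L^{r/\lambda}(w)}$ for every $r\ge\lambda$, and $[w]_{A_{r/\lambda}}$ is unchanged by this rewriting, the hypothesis becomes $\|F\|_{L^{q_0}(w)}\le C^{\lambda}N([w]_{A_{q_0}})^{\lambda}\|G\|_{L^{q_0}(w)}$ for all $w\in A_{q_0}$, while the desired conclusion is the analogous bound at exponent $q\ge 1$ for $w\in A_{q}$, with exponent on $[w]_{A_q}$ equal to $\max\bigl(1,(q_0-1)/(q-1)\bigr)=\max\bigl(1,(p_0-\lambda)/(p-\lambda)\bigr)$. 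Hence it suffices to prove the sharp form of Rubio de Francia extrapolation in the case $\lambda=1$, working with the pair $(F,G)$.

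\emph{Sharp extrapolation at $\lambda=1$.} Fix $q\ge 1$ and $w\in A_{q}$, and split into two cases. When $q\ge q_0$, I would write $\|F\|_{L^{q}(w)}^{q_0}=\|F^{q_0}\|_{L^{q/q_0}(w)}$ and dualize against $0\le h\in L^{(q/q_0)'}(w)$ of unit norm. Applying the Rubio de Francia iteration
\[
\mathcal{R}h:=\sum_{k\ge 0}\frac{M^{k}h}{\bigl(2\|M\|_{L^{(q/q_0)'}(w)\to L^{(q/q_0)'}(w)}\bigr)^{k}}
\]
produces a pointwise majorant $h\le \mathcal{R}h$ with $\|\mathcal{R}h\|_{L^{(q/q_0)'}(w)}\le 2$ and $[\mathcal{R}h]_{A_1}\lesssim \|M\|_{L^{(q/q_0)'}(w)}$. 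A direct computation gives $(\mathcal{R}h)w\in A_{q_0}$, with characteristic controlled by $[\mathcal{R}h]_{A_1}$ and $[w]_{A_q}$; applying the hypothesis to $(F,G)$ with weight $(\mathcal{R}h)w$ and H\"older, together with Buckley's sharp bound $\|M\|_{L^{s}(v)\to L^{s}(v)}\lesssim [v]_{A_s}^{1/(s-1)}$, collapses the constants into $N(C_{2}[w]_{A_q})$, yielding exponent $1$. When $1\le q<q_0$, one performs the dual argument, factoring $w$ via the conjugate Rubio de Francia algorithm applied on the $L^{q_0}$-side (equivalently, writing $w=w_1w_2^{1-q_0}$ with $w_1,w_2\in A_1$ of controlled characteristic); Buckley's inequality applied to $M$ on the relevant $L^{s}(w^{1-q'})$ space produces a factor scaling like $[w]_{A_q}^{1/(q-1)}$, which is exactly the exponent $(q_0-1)/(q-1)$.

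\emph{Main obstacle.} The delicate point is bookkeeping of constants: since $N$ is an arbitrary increasing function, every power of $[w]_{A_q}$ produced along the iteration must be absorbed \emph{inside} the single innermost argument of $N$, rather than accumulating as a multiplicative factor outside. This forces the use of the sharp Buckley bound (not merely qualitative boundedness of $M$), together with the explicit computation of $[(\mathcal{R}h)w]_{A_{q_0}}$ (respectively $[w_1w_2^{1-q_0}]_{A_{q_0}}$) in terms of $[w]_{A_q}$ and the $A_1$ characteristics generated by the iteration. Once this is arranged, the constants aggregate into exactly $C_{1}N\bigl(C_{2}[w]_{A_{p/\lambda}}^{\max(1,(p_0-\lambda)/(p-\lambda))}\bigr)$, which is the desired bound after reversing the $\lambda$-th power in the opening reduction.
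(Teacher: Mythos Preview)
This statement is not proved in the paper: Theorem~\ref{shext} is quoted from \cite{D2} (it is Corollary~4.2 there) and used as a black box in Section~\ref{sec:Qweight}. So there is nothing in the present paper to compare your argument against. Your reduction to $\lambda=1$ via $F=f^{\lambda}$, $G=g^{\lambda}$, $q=p/\lambda$, $q_0=p_0/\lambda$ is correct and is exactly how Duoandikoetxea deduces his Corollary~4.2 from his sharp extrapolation Theorem~3.1; the overall plan for the $\lambda=1$ case --- Rubio de Francia iteration combined with Buckley's sharp bound for $M$ --- is also the right one.

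There is, however, a genuine gap in your treatment of the upward case $q>q_0$. Iterating the \emph{plain} maximal function $M$ on $L^{(q/q_0)'}(w)$ is not legitimate: boundedness of $M$ there would require $w\in A_{(q/q_0)'}$, which for $q>q_0+1$ is strictly stronger than the hypothesis $w\in A_q$. More decisively, even granting $\mathcal{R}h\in A_1$, the product of an $A_1$ weight with an $A_q$ weight lies only in $A_q$, not in the smaller class $A_{q_0}$ in general --- take $q_0=2$, $q=4$, $\mathcal{R}h\equiv 1$, and $w(x)=|x|^{2}\in A_4\setminus A_2$ --- so the ``direct computation'' you invoke cannot exist. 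The argument in \cite{D2} instead runs the iteration with the dual operator $M'_{w}h:=w^{-1}M(hw)$ on $L^{q'}(w)$, where it \emph{is} bounded with norm $\lesssim[w]_{A_q}$ (this is Buckley for $M$ on $L^{q'}(w^{1-q'})$, and $w^{1-q'}\in A_{q'}$), applied to an appropriate power of $h$; one then verifies that the resulting weight $Hw$ belongs to $A_{q_0}$ with $[Hw]_{A_{q_0}}\lesssim[w]_{A_q}$, yielding the exponent~$1$. Your description of the downward case $q<q_0$ is vaguer but points in the correct direction: there one iterates $M$ on $L^{q}(w)$ applied to $G$, obtaining $\mathcal{R}G\in A_1$ with $[\mathcal{R}G]_{A_1}\lesssim[w]_{A_q}^{1/(q-1)}$, and checks that $W:=(\mathcal{R}G)^{q-q_0}w\in A_{q_0}$ with $[W]_{A_{q_0}}\lesssim[w]_{A_q}^{(q_0-1)/(q-1)}$.
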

By Corollary \ref{cor:what}, we know that for $\lambda=2$ and $p_0=3$, we have, for $w\in A_{3/2}$,
$$\|Tf\|_{L^{3}(w)}\le C[w]_{A_{3/2}}\|f\|_{L^{3}(w)}.
$$ 
Then, Theorem \ref{shext} yields for all $2\le p<\infty$ and all $w\in A_{p/2}$,
$$\|Tf\|_{L^p(w)}\le C_1\big(C_2[w]_{A_{p/2}}^{\max\big(1,\frac{1}{p-2}\big)}\big)\|f\|_{L^p(w)}.
$$
Unfortunately, this approach does not give us anything better in this case. In particular, observe that in the case $p=2$ we are not obtaining the boundedness of the operator.

In the work of Frey and Nieraeth  \cite[Theorem 1.4]{FN} the following result concerning weighted weak type estimates was proved.

\begin{theorem}[\cite{FN}]
\label{thm:weakFN}
Let $1\le p_0<p<q_0\le \infty$ and $T$ be an operator with the same hypotheses as in Theorem \ref{lem:BFP}. Let $w\in A_1$. Then there is a constant $c>0$ so that
$$
\|T\|_{L^{p_0}(w)\to L^{p_0,\infty}(w)}\le c[w]_{A_1}^{\frac{1}{p_0}}[w]_{A_{\infty}}^{\frac{1}{p'_0}}\log(e+[w]_{A_{\infty}})^{\frac{2}{p_0}}.
$$
\end{theorem}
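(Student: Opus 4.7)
The plan is to deduce the weak-type endpoint from the sparse domination via a weighted Calder\'on--Zygmund-type decomposition combined with sharp reverse H\"older estimates for $A_\infty$ weights. Since the hypothesis of the theorem is exactly the sparse bound of Theorem \ref{lem:BFP}, the first step is to dualize: after noting that the Lorentz dual of $L^{p_0,\infty}(w)$ is $L^{p_0',1}(w)$, it suffices to estimate the bilinear sparse form $\Lambda_{\mathcal{S}}(f,g) = \sum_{I \in \mathcal{S}} |I| \langle f\rangle_{p_0,I}\langle g\rangle_{q_0',I}$ against $f \in L^{p_0}(w)$ and $g = \mathbf{1}_E$ for a measurable set $E$ of finite $w$-measure, since general $g \in L^{p_0',1}(w)$ can be reduced to characteristic functions by the layer-cake/Lorentz integration formula.

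Next I would perform a Calder\'on--Zygmund stopping-time decomposition of $|f|^{p_0}w$ at a height chosen proportional to $\lambda^{p_0}w(E)/\|f\|_{L^{p_0}(w)}^{p_0}$ using the weighted maximal operator $M^w_{p_0}$, producing a pairwise disjoint collection $\{Q_j\}$ and a splitting $f = g + b$, where $b = \sum_j (f - \langle f\rangle_{Q_j})\mathbf{1}_{Q_j}$. The goal is to show that $\Lambda_{\mathcal{S}}(f,\mathbf{1}_E) \lesssim \lambda w(E)^{1/p_0'}\|f\|_{L^{p_0}(w)}$ up to the claimed weight factor. The good part lies in $L^\infty$ with the right level, so it can be handled by a strong-type estimate on $L^{p_1}(w)$ for some $p_1 > p_0$ with $p_1 \to p_0$, invoking Theorem \ref{lem:BFP}; the $[w]_{A_{p_1/p_0}}$ factor from that theorem can then be dominated, using a sharp reverse H\"older inequality of Hyt\"onen--P\'erez type, by $[w]_{A_1}^{1/p_0}[w]_{A_\infty}^{1/p_0'}$ once $p_1 - p_0$ is tuned to order $1/\log(e + [w]_{A_\infty})$, which is exactly where both the $A_\infty$ factor and the logarithmic term enter.

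For the bad part, I would exploit the sparsity and the cancellation in $b_j$ by splitting the sparse collection $\mathcal{S}$ according to whether a given $I \in \mathcal{S}$ is contained in some $Q_j$, properly intersects one, or strictly contains it. On intervals strictly containing some $Q_j$ one uses the telescoping structure of the principal cubes (i.e., the Carleson packing coming from sparsity) to bound the sum by $\sum_j w(Q_j) \lesssim [w]_{A_1}\lambda^{-p_0}\|f\|_{L^{p_0}(w)}^{p_0}$ via the $A_1$ property of $w$ and the weak-type $(1,1)$ of $M^w_{p_0}$. Intervals contained in a $Q_j$ contribute an $L^{p_0}$ type term that again can be controlled using the reverse H\"older exponent governed by $[w]_{A_\infty}$.

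The main obstacle, and the delicate point of the argument, is to obtain precisely the combination $[w]_{A_1}^{1/p_0}[w]_{A_\infty}^{1/p_0'}\log(e + [w]_{A_\infty})^{2/p_0}$ without losing a power: a naive extrapolation from the strong-type estimate of Theorem \ref{lem:BFP} would yield $[w]_{A_1}$ to a power higher than $1/p_0$ or miss the separation into $A_1$ and $A_\infty$ factors. The sharp splitting requires that one replaces, in the strong-type bound for the good part, the $A_{p_1/p_0}$ characteristic by an $A_1$--$A_\infty$ mixed characteristic (as in Hyt\"onen--P\'erez and Li's refinement cited after Theorem \ref{lem:BFP}) and then optimizes the free parameter $p_1 - p_0$. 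The $\log^{2/p_0}$ factor is the unavoidable cost of this optimization, coming from the sharp reverse H\"older inequality $\langle w^{1 + \varepsilon}\rangle_Q^{1/(1+\varepsilon)} \lesssim \langle w\rangle_Q$ for $\varepsilon \sim 1/[w]_{A_\infty}$ applied twice (once to the good part and once to the bad part).
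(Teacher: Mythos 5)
A preliminary remark: the paper does not prove this statement at all — it is imported verbatim from Frey--Nieraeth \cite{FN} (their Theorem 1.4) and used as a black box to deduce Corollary \ref{cor:whatw}. So your attempt is necessarily a new argument, and the question is whether it closes; as written it does not, for two concrete reasons.

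The first gap is your treatment of the good part. You propose to apply the strong-type bound of Theorem \ref{lem:BFP} at some $p_1>p_0$ and then tune $p_1-p_0\sim 1/\log(e+[w]_{A_\infty})$. But in Theorem \ref{lem:BFP} the quantity $\tfrac{1}{p_1-p_0}$ sits in the \emph{exponent} of the weight characteristic, $[w^{(q_0/p_1)'}]_{A_{\phi(p_1)}}^{\max\big(\frac{1}{p_1-p_0},\frac{q_0-1}{q_0-p_1}\big)/(q_0/p_1)'}$, not in a multiplicative constant. With your choice of $p_1$ this yields a factor of size $[w]^{c\log(e+[w]_{A_\infty})}=\exp\big(c\log[w]\cdot\log(e+[w]_{A_\infty})\big)$, which is super-polynomial in the characteristic and cannot be reduced to $[w]_{A_1}^{1/p_0}[w]_{A_\infty}^{1/p_0'}\log(e+[w]_{A_\infty})^{2/p_0}$. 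The reverse-H\"older tuning $\varepsilon\sim 1/[w]_{A_\infty}$ rescues arguments in which the blow-up as $p_1\to p_0$ appears as a constant factor; here it appears as an exponent, and invoking the mixed bound of Li does not remove that blow-up. The endpoint logarithm genuinely cannot be recovered by interpolating the strong-type family in this way.

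The second gap is the bad part. You plan to exploit the cancellation of $b_j=(f-\langle f\rangle_{Q_j}){\bf 1}_{Q_j}$, but the object being estimated is a positive sparse form: all its entries are averages $\langle\,\cdot\,\rangle_{p_0,I}=\langle|\cdot|^{p_0}\rangle_I^{1/p_0}$ of absolute values, so mean-zero of $b_j$ is invisible to it, and there is no kernel regularity against which to pair the cancellation (this is exactly what separates the sparse endpoint problem from the classical weighted weak $(1,1)$ argument for Calder\'on--Zygmund operators). Moreover, the sparse collection in Theorem \ref{lem:BFP} depends on the pair of functions, so after writing $f=g+b$ you get different collections for the two pieces, and the ``telescoping/Carleson packing'' step is not defined over a common family. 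Two smaller points: the duality reduction to $L^{p_0',1}(w)$ is unavailable at $p_0=1$, which the statement permits; and since the sparse bound controls the unweighted pairing $\langle Tf,g\rangle$, dualizing the weighted weak norm forces $gw$ into the form, so the relevant averages are $\langle gw\rangle_{q_0',I}$ rather than $\langle g\rangle_{q_0',I}$ — for $q_0'>1$ this already costs a reverse-H\"older/$A_\infty$ factor your sketch never accounts for. In short, the two central steps of the proposal would fail as described; a correct proof along these lines requires the genuinely different machinery of \cite{FN}.
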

This yields the following weak-type end-point estimate for the Rubio's square function as a corollary. In particular, we recover the qualitative weighted weak type for $p=2$ in \cite[Theorem B (ii)]{K}.
 \begin{corollary}
 \label{cor:whatw}
 For $w\in A_{1}$
 $$
 \|T\|_{L^2(w)\to L^{2,\infty}(w)}\lesssim c[w]_{A_1}^{\frac{1}{2}}[w]_{A_{\infty}}^{\frac{1}{2}}\log(e+[w]_{A_{\infty}}).
 $$
 \end{corollary}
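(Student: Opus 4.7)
The plan is to derive this end-point weak-type bound directly from the sparse domination in Theorem \ref{thm:main} by invoking Theorem \ref{thm:weakFN}. Since Theorem \ref{thm:weakFN} is formulated for scalar operators with scalar sparse dominations, the first thing I would do is record the scalar reformulation of Theorem \ref{thm:main}. Given a nonnegative scalar $h\in C_0^{\infty}(\R)$, I would choose
$$
g_k(x):=\frac{\overline{f\ast \check{\mathbf 1}_{\omega_k}(x)}}{Tf(x)}\,h(x)
$$
(with the convention that $g_k=0$ where $Tf=0$), so that $|\vec g(x)|=h(x)$ and $\langle Tf,\vec g\rangle=\int_\R Tf(x)\,h(x)\,dx$. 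Plugging this choice into Theorem \ref{thm:main} yields, for all $f,h\in C_0^{\infty}(\R)$ with $h\ge 0$, the scalar sparse bound
$$
\int_{\R} Tf(x)\,h(x)\,dx \le K \sum_{I\in \mathcal{S}}|I|\langle f\rangle_{2,I}\langle h\rangle_{I},
$$
i.e., $T$ is pointwise sparsely dominated by a bilinear sparse form with exponents $(p_0,q_0')=(2,1)$, equivalently $p_0=2$ and $q_0=\infty$.

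Second, with this reformulation in hand, Theorem \ref{thm:weakFN} (i.e., \cite[Theorem 1.4]{FN}) applies verbatim. Specializing the exponents of that statement to $p_0=2$ and $q_0=\infty$ gives
$$
\|T\|_{L^2(w)\to L^{2,\infty}(w)} \le c\,[w]_{A_1}^{1/2}[w]_{A_{\infty}}^{1/2}\log\bigl(e+[w]_{A_{\infty}}\bigr)^{2/2},
$$
since $1/p_0=1/p_0'=1/2$ and the $\log$-exponent $2/p_0$ equals $1$. This is precisely the claim of Corollary \ref{cor:whatw}.

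There is essentially no obstacle once the scalar reduction is observed: the sparse-to-weighted machinery of \cite{FN} does all the heavy lifting. The only point that deserves a sentence of care is verifying that the density/test-function class used in \cite{FN} is compatible with our $C_0^\infty$ class (which it is, because such functions are dense in $L^p(w)$ for every $w\in A_\infty$ and every $1\le p<\infty$), so that Theorem \ref{thm:weakFN} may be applied with the same test-function space in which Theorem \ref{thm:main} is proved. After that, the conclusion is immediate and no further estimates are needed.
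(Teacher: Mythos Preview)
Your proposal is correct and follows exactly the paper's route: the corollary is obtained by feeding the sparse domination of Theorem~\ref{thm:main} into Theorem~\ref{thm:weakFN} (\cite[Theorem~1.4]{FN}) with $p_0=2$, $q_0=\infty$. The only cosmetic point is that your choice $g_k=\overline{f\ast\check{\mathbf 1}_{\omega_k}}\,h/Tf$ is compactly supported but need not be $C_0^\infty$ (because of the set $\{Tf=0\}$); this is harmless, since one may approximate $h$ and $g_k$ in the obvious way, or equivalently note that the sparse bound in Theorem~\ref{thm:main} extends by density from $C_0^\infty$ to bounded compactly supported functions.
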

 We do not know whether the quantitative estimate above is sharp, or if the logarithm term can be removed. 

\subsection{Further discussions}
\label{sub:conje}

Recall that in \cite{Rubio}, Rubio de Francia proved the following.
\begin{theorem}[\cite{Rubio} Theorem 6.1]
\label{thm:Rubio}
Let $2<p<\infty$ and $w\in A_{p/2}(\R)$. Then the square function $T$ is bounded on $L^p(w)$.
\end{theorem}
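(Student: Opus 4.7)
The plan is to derive Theorem \ref{thm:Rubio} as an essentially immediate consequence of the sparse domination already established in Theorem \ref{thm:main}, combined with the abstract weighted theorem for bilinear sparse forms of Bernicot--Frey--Petermichl (Theorem \ref{lem:BFP}). This is the natural route given that the paper has already proved the much stronger quantitative estimate in Corollary \ref{cor:what0}, of which Theorem \ref{thm:Rubio} is the qualitative shadow.

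More precisely, I would argue as follows. By Theorem \ref{thm:main}, for every $f,\vec g\in C_0^\infty(\R)$ there exists a $\tfrac{1}{6}$-sparse family $\mathcal S$ (depending on $(f,\vec g)$) with
$$
|\langle Tf, \vec g\rangle| \le K\sum_{I\in \mathcal{S}} |I|\,\langle f\rangle_{2,I}\,\langle |\vec g|\rangle_{I}.
$$
Viewing $Tf$ as the $\ell^2$-valued object $(f\ast \check{\mathbf 1}_{\omega_k})_{k=1}^N$, the left hand side is the (unweighted) duality pairing between $L^p(w;\ell^2)$ and $L^{p'}(w^{1-p'};\ell^2)$, while the right hand side is a bilinear sparse form of type $(p_0,q_0')=(2,1)$, i.e.\ with $p_0=2$ and $q_0=\infty$. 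Applying Theorem \ref{lem:BFP} with this choice of exponents gives, for every $2<p<\infty$ and every $w\in A_{p/p_0}=A_{p/2}$,
$$
\|T\|_{L^p(w)\to L^p(w)} \lesssim [w]_{A_{p/2}}^{\max\big(\frac{1}{p-2},\,1\big)} < \infty,
$$
which is exactly the qualitative statement of Theorem \ref{thm:Rubio} (and strictly sharpens it to the quantitative content of Corollary \ref{cor:what0}).

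The only remaining technical point is a density argument: Theorem \ref{thm:main} is formulated for $f,\vec g\in C_0^\infty(\R)$, whereas Theorem \ref{thm:Rubio} concerns arbitrary $f\in L^p(w)$. Since $C_0^\infty(\R)$ is dense in $L^p(w)$ for every $w\in A_\infty$, and in particular for $w\in A_{p/2}\subset A_\infty$, the a priori bound on smooth functions extends to all of $L^p(w)$ in the standard way. The fact that the sparse collection $\mathcal S$ depends on the data $(f,\vec g)$ is harmless, since Theorem \ref{lem:BFP} delivers a weighted operator norm that is uniform over all choices of sparse families.

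In this approach the hard part has already been entirely absorbed by the proof of Theorem \ref{thm:main} itself (the time-frequency decomposition, single tree estimate, and the stopping-time iteration of Sections \ref{sec:sparse}--\ref{sec:proofm2}); the deduction of Theorem \ref{thm:Rubio} is essentially cost-free. There is no genuine obstacle beyond verifying that the triple $(p_0,q_0,p)=(2,\infty,p)$ for $p>2$ meets the hypotheses of Theorem \ref{lem:BFP} and that the weight class $A_{p/2}$ matches the one appearing in Theorem \ref{thm:Rubio}.
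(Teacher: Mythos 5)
Your derivation is correct, but it is not how the paper treats this statement: Theorem \ref{thm:Rubio} is quoted verbatim from Rubio de Francia's original work (\cite{Rubio}, Theorem 6.1) and is not reproved in the paper at all; it is stated only as background for the discussion of Rubio's conjecture. What you propose is, in substance, the paper's own deduction of the \emph{stronger} Corollary \ref{cor:what0}: sparse domination (Theorem \ref{thm:main}) with exponents $(p_0,q_0)=(2,\infty)$ fed into Theorem \ref{lem:BFP}, giving $\|T\|_{L^p(w)\to L^p(w)}\lesssim [w]_{A_{p/2}}^{\max(\frac1{p-2},1)}$, of which the qualitative boundedness is an immediate consequence. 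Your handling of the side issues is sound: the vector-valued pairing is reduced to a scalar sparse form via $h=|\vec g|$ and duality, the dependence of $\mathcal S$ on $(f,\vec g)$ is irrelevant since Theorem \ref{lem:BFP} only needs a sparse bound for each pair of test functions, density of $C_0^\infty$ in $L^p(w)$ for $w\in A_\infty$ handles the extension, and there is no circularity because the proof of Theorem \ref{thm:main} (time-frequency model, tree estimates, stopping times) nowhere uses weighted boundedness of $T$; one should only add the paper's standing convention that the estimates are proved for finite families $\{\omega_k\}_{k=1}^N$ with constants independent of $N$, so the general case follows by limiting. The trade-off between the two routes: Rubio de Francia's original argument is self-contained, historically prior, and independent of the sparse/time-frequency machinery, whereas your route leans on the entire apparatus of Sections \ref{sec:sparse}--\ref{sec:proofm2} but yields the quantitative $A_{p/2}$ dependence for free.
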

In the same paper, he conjectured that the boundedness on $L^2(w)$ for $w\in A_1$ should also hold (see \cite[Section 6, p. 10]{Rubio} and \cite[Chapter 8, Section 8.2, p. 186--187] {Duobook}). To our best knowledge, this is still a conjecture.  He also pointed out that if we consider the particular case of congruent intervals, then the  $L^2(w)$ boundedness of the square function for $w\in A_1$ holds. Moreover, keeping a track of weighted characteristic constant in his proof (shown in his earlier paper \cite[Theorem A]{Rubio2}), one can get the linear growth in terms of $[w]_{A_1}$. Further, the operator norm is independent of the common length of intervals under consideration.  If we combine this observation with weighted boundedness of the lacunary square functions, we get more evidences supporting the conjecture. More precisely, let $w_k$ denote a lacunary sequence of intervals partitioning the real line.  For each $k\in \Z$, consider the decomposition of $w_k$ into congruent intervals $\{w_{k,n}\}_{n=1}^{N_k}$ forming a new sequence $\{w_{k,n}\}$. We allow the length of congruent intervals to vary with $k$. Then the associated square function is $L^2(w)$ bounded for $w\in A_1$. For, let $T_{k,n}(f) $ and $T_k(f)$ denote the Fourier multiplier operators with symbol ${\bf 1}_{w_{k,n}}$ and ${\bf 1}_{w_{k}}$ respectively. We may write $T_{k,n}$ as composition of $T_{k,n}$ with $T_k$. Then we use the weighted estimates for the sequence indexed by $n$ and then for the sequence indexed by $k$. Indeed, for the associated square function $T$ we have 
\begin{align*}
\|T(f)\|^2_{L^2(w)}&=\sum\limits_{k,n} \int_{\R}|T_{k,n}(f)|^2 w(x)dx \\
&=\sum\limits_{k,n} \int_{\R}|T_{k,n}(T_k f)|^2 w(x)dx \\
&\lesssim  [w]^2_{A_1} \sum\limits_{k} \int_{\R}|T_{k}(f)|^2 w(x)dx \\
&\lesssim  [w]_{_{A_1}} ^{5}\int_{\R}|f|^2 w(x)dx.
\end{align*}
Here, in the first inequality we have used the $L^2(w)$ boundedness of the square function with congruent intervals for $A_1$ weights. The second inequality follows from $L^2(w)$ boundedness of lacunary square function (see~Theorem~\ref{qw:lerner}) for $A_2$ weights along with the fact that $[w]_{A_2}\leq [w]_{A_1}$. These simple examples support Rubio's conjecture. 

As we have seen, the sparse domination method in the current form yields weak-type estimates at the end-points in the range under consideration. Observe that the quantitative weak-type result for $p=2$ in Corollary \ref{cor:whatw} could prevent us from conjecturing the linear dependence on the characteristic of the weight. However, we do not know whether such a quantitative weak type estimate is sharp, or if the logarithm term can be removed. Actually, this is another interesting open problem to be attained.  

On the other hand, the quantitative estimates given in Corollary \ref{cor:what0} for the interval $2<p<3$, blow-up as $p\rightarrow 2$. In view of the Rubio's conjecture this bound seems to be far from being sharp and in our opinion that it is due to the techniques employed in this paper. We think that we would require a new set of ideas in order to make any progress in this direction. 

In conclusion, we believe that Rubio's conjecture should hold, but at this moment we do not have an intuition of which should be the correct dependence on the characteristic of the $A_1$ weight. 

We finish the paper with a summary of the open problems:
\begin{enumerate}
\item Prove or disprove the following: For $w\in A_1$
$$
\|T\|_{L^2(w)\to L^2(w)}\lesssim \Phi([w]_{A_1}),
$$
and in case of a positive answer, determine the sharp dependence on $[w]_{A_1}$.
\item Determine whether the quantitative weak type estimate is sharp
$$
 \|T\|_{L^2(w)\to L^{2,\infty}(w)}\lesssim c[w]_{A_1}\log(e+[w]_{A_{1}}).
 $$
 \item Determine whether the quantitative strong type estimate is sharp in the range $2<p<3$
 $$
 \|T\|_{L^p(w)\to L^{p}(w)}\lesssim [w]_{A_{p/2}}^{\max\big(\frac{1}{p-2},1\big)}.
 $$
 Of course, the latter is closely related to the first one.
\end{enumerate}

\section*{Acknowledgements}

This work was initiated during the visit of the second author to R. Garg and S. Shrivastava at IISER Bhopal in July 2018. The second author is very grateful for the kind hospitality. The authors would also like to thank  Andrei Lerner for several discussions and suggestions related to this project and to Camil Muscalu for offering clarifications about the helicoidal method. We are also greatly indebted to the referee for helpful and valuable remarks.

The first author was supported in part by the INSPIRE Faculty Award from the Department of Science and Technology (DST), Government of India. The second author was supported by the Basque Government through BERC 2018--2021 program, by Spanish Ministry of Science, Innovation and Universities through BCAM Severo Ochoa accreditation SEV-2017-2018 and the project MTM2017-82160-C2-1-P funded by AEI/FEDER, UE, and by 2017 Leonardo grant for Researchers and Cultural Creators, BBVA Foundation. The Foundation accepts no responsibility for the opinions, statements and contents included in the project and/or the results thereof, which are entirely the responsibility of the authors. The third author was supported by Science and Engineering Research Board (SERB), Government of India, under the grant MATRICS: MTR/2017/000039/Math.

\end{document}